\pdfoutput=1

\documentclass[10pt,reqno]{amsart}
\usepackage{amsfonts,amssymb}
\usepackage{color}
\usepackage{xcolor}
\usepackage[mathscr]{eucal}
\usepackage{amsmath}
\usepackage{amsthm}
\usepackage{mathrsfs}
\usepackage{amsbsy}
\usepackage{dsfont}
\usepackage{mathtools}
\usepackage{bbm}
\usepackage{subcaption}
\usepackage{wasysym}
\usepackage{multirow}
\usepackage{graphicx,epstopdf}
\usepackage[pdftex,bookmarks,colorlinks,breaklinks]{hyperref}  
\usepackage{color}
\usepackage[ruled,vlined]{algorithm2e}

\definecolor{dullmagenta}{rgb}{0.4,0,0.4}   
\definecolor{darkblue}{rgb}{0,0,0.4}
\definecolor{orange}{rgb}{1,0.5,0}
\hypersetup{linkcolor=red,citecolor=blue,filecolor=dullmagenta,urlcolor=darkblue} 
\setcounter{tocdepth}{1}
\graphicspath{{C:\Users\pmxbk1\Documents\Mathematics\NEW STUFF\MaxCut Paper}}

\DeclarePairedDelimiter\floor{\lfloor}{\rfloor}
\DeclareGraphicsExtensions{.png,.pdf,.jpg}
\usepackage{stmaryrd}
 \voffset = -18mm
  \hoffset = -6mm
   \textheight 240mm
    \textwidth 144mm
\input xypic
\xyoption{all}

\theoremstyle{plain}
\newtheorem{theorem}{Theorem}[section]
\newtheorem{lemma}[theorem]{Lemma}
\newtheorem{corollary}[theorem]{Corollary}
\newtheorem{proposition}[theorem]{Proposition}

\newtheorem{definition}[theorem]{Definition}

\newcommand{\mc}[1]{\mathrm{mc}(#1)}
\newcommand{\TV}{\mathrm{TV}}
\newcommand{\argmax}{\mathrm{argmax}}
\newcommand{\argmin}{\mathrm{argmin}}
\let\e\varepsilon
\newcommand{\vz}[1]{\ensuremath{\mathbb{#1}}}
\newcommand{\R}{{\vz R}}
\newcommand{\N}{{\vz N}}

\newcounter{hdps}
\newenvironment{asm}[1]
  {\refstepcounter{hdps}
    \begin{algorithm}}
  {\end{algorithm}\addtocounter{algocf}{-1}}

\subjclass[2010]{05C85, 35R02, 35Q56, 49K15, 68R10}

\begin{document}
\baselineskip = 5mm


\title[Max-Cut Approximation: Graph Based MBO Scheme]{A Max-Cut Approximation Using A Graph Based MBO Scheme}

\maketitle

\begin{center}
\large{Blaine Keetch, Yves van Gennip},\\ 
\normalsize{School of Mathematical Sciences,\\ The University of Nottingham,\\ University Park, Nottingham, NG7 2RD.}
\end{center}

\begin{abstract}

The Max-Cut problem is a well known combinatorial optimization problem. In this paper we describe a fast approximation method. Given a graph $G$, we want to find a cut whose size is maximal among all possible cuts. A cut is a partition of the vertex set of $G$ into two disjoint subsets. For an unweighted graph, the size of the cut is the number of edges that have one vertex on either side of the partition; we also consider a weighted version of the problem where each edge contributes a nonnegative weight to the cut.

We introduce the signless Ginzburg-Landau functional and prove that this functional $\Gamma$-converges to a Max-Cut objective functional. We approximately minimize this functional using a graph based signless Merriman-Bence-Osher scheme, which uses a signless Laplacian. We show experimentally that on some classes of graphs the resulting algorithm produces more accurate maximum cut approximations than the current state-of-the-art approximation algorithm. One of our methods of minimizing the functional results in an algorithm with a time complexity of $\mathcal{O}(|E|)$, where $|E|$ is the total number of edges on $G$.

\end{abstract}


\section{Introduction}

\subsection{Maximum cut}\label{sec:maximumcut}

Given an undirected (edge-)weighted graph $G = (V,E,\omega)$, a cut $V_{-1}|V_1$ is a partition of the node set $V$ into two disjoint subsets $V_{-1}$ and $V_1$. The size of a cut $C=V_{-1}|V_1$, denoted by $s(C)$, is the sum of all the weights corresponding to edges that have one end vertex in $V_{-1}$ and one in $V_1$. The maximum cut (Max-Cut) problem is the problem of finding a cut $C^*$ such that for all cuts $C$, $s(C) \leq s(C^*)$. We call such a $C^*$ a maximum cut and say $\mc{G}:=s(C^*)$ is the maximum cut value of the graph $G$. The Max-Cut problem for an unweighted graph is a special case of the Max-Cut problem on a weighted graph which we obtain by assuming all edge weights are $1$. Finding an unweighted graph's Max-Cut is equivalent to finding a bipartite subgraph with the largest number of edges possible. In fact, for an unweighted bipartite graph $\mc{G}=|E|$. 

The Max-Cut problem is an NP-hard problem; assuming P $\neq$ NP no solution can be acquired in polynomial time. There are a variety of polynomial time approximation algorithms for this problem \cite{goemans1995,bylka1999,trevisan2012}. Some Max-Cut approximation algorithms have a proven lower bound on their accuracy, which asserts the existence of a $\beta\in [0,1]$ such that, for all output cuts $C$ obtained by the algorithm, $s(C) \geq \beta \mc{G}$. We call such a $\beta$ a performance guarantee. For algorithms that incorporate stochastic steps, such a lower bound typically takes the form $E[s(C)] \geq \beta \mc{G}$ instead, where $E[s(C)]$ denotes the expected value of the size of the output cut.

In recent years a new type of approach to approximating such graph problems has gained traction. Models from the world of partial differential equations and variational methods that exhibit behaviour of the kind that could be helpful in solving the graph problem are transcribed from their usual continuum formulation to a graph based model. The resulting discrete model can then be solved using techniques from numerical analysis and scientific computing. Examples of problems that have successfully been tackled in this manner include data classification \cite{bertozzi2012}, image segmentation \cite{calatroni2016}, and community detection \cite{hu2013}. In this paper we use a variation on the graph Ginzburg-Landau functional, which was introduced in \cite{bertozzi2012}, to construct an algorithm which approximately solves the Max-Cut problem on simple undirected weighted graphs.

We compare our method with the Goemans-Williamson (GW) algorithm \cite{goemans1995}, which is the current state-of-the-art method for approximately solving the Max-Cut problem. In \cite{goemans1995} the authors solve a relaxed Max-Cut objective function and intersect the solution with a random hyperplane in a $n$-dimensional sphere. It is proven that if gw($C$) is the size of the cut produced by the Goemans-Williamson algorithm, then its expected value $E$[gw$(C)]$ satisfies the inequality $E$[gw$(C)] \geq \beta \mc{G}$ where $\beta = 0.878$ (rounded down). If the Unique Games Conjecture \cite{khot2002} is true, the GW algorithm has the best performance guarantee that is possible for a polynomial time approximation algorithm \cite{khot2007}. It has been proven that approximately solving the Max-Cut problem with a performance guarantee of $\frac{16}{17}\approx 0.941$ or better is NP-hard \cite{trevisan2000}.

Finding $\mc{G}$ is equivalent to finding the ground state of the Ising Hamiltonian in Ising spin models \cite{haribara2016, barahona1988}, and 0/1 linear programming problems can be restated as Max-Cut problems \cite{lasserre2016}.

\subsection{Signless Ginzburg-Landau functional}\label{sec:GL}

Spectral graph theory \cite{chung1997} explores the relationships between the spectra of graph operators, such as graph Laplacians (see Section~\ref{sec:setup}), and properties of graphs. For example, the multiplicity of the zero eigenvalue of the (unnormalised, random walk, or symmetrically normalised) graph Laplacian is equal to the number of connected components of the graph. Such properties lie at the basis of the successful usage of the graph Laplacian in graph clustering, such as in spectral clustering \cite{von2007} and in clustering and classification methods that use the graph Ginzburg-Landau functional \cite{bertozzi2012}
\begin{equation}\label{eq:graphGL}
f_\e(u) := \frac12 \sum_{i,j\in V} \omega_{ij} (u_i-u_j)^2 + \frac1\e \sum_{i\in V} W(u_i).
\end{equation}
Here $u: V\to \R$ is a real-valued function defined on the node set $V$, with value $u_i$ on node $i$, $\omega_{ij}$ is a positive weight associated with the edge between nodes $i$ and $j$ (and $\omega_{ij}=0$ if such an edge is absent), and $W(x):= (x^2-1)^2$ is a double-well potential with minima at $x=\pm 1$. In Section~\ref{sec:setup} we will introduce our setting and notation more precisely.

The method we use in this paper is based on a variation of $f_\e$, we call the \textit{signless Ginzburg-Landau functional}:
\begin{equation}\label{eq:signlessgraphGL}
f_\e^+(u): = \frac12 \sum_{i,j\in V} \omega_{ij} (u_i+u_j)^2 + \frac1\e \sum_{i\in V} W(u_i).
\end{equation}
This nomenclature is suggested by the fact that the signless graph Laplacians are related to $f_\e^+$ in a similar way as the graph Laplacians are related to $f_\e$, as we will see in Section~\ref{sec:setup}. Signless graph Laplacians have been studied because of the connections between their spectra and bipartite subgraphs \cite{desai1994}. In \cite{hein2007, van2012} the authors derive a graph difference operator and a graph divergence operator to form a graph Laplacian operator. In this paper we mimic this framework by deriving a signless difference operator and a signless divergence operator to form a signless Laplacian operator. Whereas the graph Laplacian operator is a discretization of the continuum Laplacian operator, the continuum analogue of the signless Laplacian is an averaging operator which is the subject of current and future research.

The functional $f_\e$ is useful in clustering and classification problems, because minimizers of $f_\e$ (in the presence of some constraint or additional term, to prevent trivial minimizers) will be approximately binary (with values close to $\pm 1$), because of the double-well potential term, and will have similar values on nodes that are connected by highly weighted edges, because of the first term in $f_\e$. This intuition can be formalised using the language of $\Gamma$-convergence \cite{dalmaso1993}. In analogy with the continuum case in \cite{modica1977,modica1987}, it  was proven in \cite{van2012} that if $\e\downarrow 0$, then $f_\e$ $\Gamma$-converges to
\[
f_0(u) := \begin{cases}
2\TV(u), &\text{if } u \text{ only takes the values } \pm1,  \\
\infty, &\text{otherwise},
\end{cases}
\]
where TV$(u):= \frac12 \sum_{i,j\in V} \omega_{ij} |u_i-u_j|$ is the graph total variation\footnote{The multiplicative factor $2$ in $f_0$ above differs from that in \cite{van2012} because in the current paper we choose different locations for the wells of $W$.}. Together with an equi-coercivity property, which we will return to in more detail in Section~\ref{sec:fe+}, this $\Gamma$-convergence result guarantees that minimizers of $f_\e$ converge to minimizers of $f_0$ as $\e \downarrow 0$. If $u$ only takes the values $\pm 1$, we note that $\TV(u) = 2s(C)$, where $C=V_{-1}|V_1$ is the cut given by $V_{\pm 1}:= \{i\in V: u_i = \pm 1\}$. Hence minimizers $u^\e$ of $f_\e$ are expected to approximately solve the {\it minimal} cut problem, if we let $V_{\pm 1} \approx \{i\in V: u_i^\e \approx \pm 1\}$.

In Section~\ref{sec:fe+} we prove that $f_\e^+$ $\Gamma$-converges to a limit functional whose minimizers solve the Max-Cut problem. Hence, we expect minimizers $u^\e$ of $f_\e^+$ to approximately solve the Max-Cut problem, if we consider the cut $C=V_{-1}|V_1$, with $V_{\pm 1} = \{i\in V: u_i^\e \approx \pm 1\}$.

\subsection{Graph MBO scheme}

There are various ways in which the minimization of $f_\e^+$ can be attempted. One such way, which can be explored in a future publication, is to use a gradient flow method. In the case of $f_\e$ the gradient flow is given by an Allen-Cahn type equation on graphs \cite{bertozzi2012,van2014},
\begin{equation}\label{eq:graphAC}
\frac{du_i}{dt} = -(\Delta u)_i - \frac1\e d_i^{-r}W'(u_i),
\end{equation}
where $\Delta u$ is a graph Laplacian of $u$, $d_i$ the degree of node $i$, and $r\in [0,1]$ a parameter (see Section~\ref{sec:setup} for further details). This can be solved using a combination of convex splitting and spectral truncation. In the case of $f_\e^+$ such an approach would lead to a similar equation and scheme, with the main difference being the use of a signless graph Laplacian instead of a graph Laplacian.

In this paper, however, we have opted for an alternative approach, which is also inspired by similar approaches which have been developed for the $f_\e$ case. The continuum Merriman-Bence-Osher (MBO) scheme \cite{MBO1992,MBO1993} involves iteratively solving the diffusion equation over a small time step $\tau$ and thresholding the solution to an indicator function. For a short diffusion time $\tau$ this scheme approximates motion by mean curvature \cite{barles1995}.  This scheme has been adapted to a graph setting \cite{merkurjev2013,van2014}. Heuristically it is expected that the outcome of the graph MBO scheme closely approximates minimizers of $f_\e$, as the diffusion step involves solving  $\frac{du_i}{dt} = -(\Delta u)_i $ and the thresholding step has a similar effect as the nonlinearity $-\frac1\e W'(u_i)$ in \eqref{eq:graphAC}. Experimental results strengthen this expectation, however rigorous confirmation is still lacking. 

In order to approximately minimize $f_\e^+$, and consequently approximately solve the Max-Cut problem, we use an MBO type scheme in which we replace the graph Laplacian in the diffusion step by a signless graph Laplacian. We use two methods to compute this step: (1) a spectral method, adapted from the one in \cite{bertozzi2012}, which allows us to use a small subset of the eigenfunctions, which correspond to the smallest eigenvalues of the graph Laplacian, and (2) an Euler method.

The usefulness of (normalised) signless graph Laplacians when attempting to find maximum cuts can be intuitively understood from the fact that their spectra are in a sense (which is made precise in Proposition~\ref{prop:eigenpairs}) the reverse of the spectra of the corresponding (normalised) graph Laplacians. Hence, where a standard graph Laplacian driven diffusion leads to clustering patterns according to the eigenfunctions corresponding to its smallest eigenvalues, `diffusion' driven by a signless graph Laplacian leads to patterns resembling the eigenfunctions corresponding to the smallest eigenvalues of that signless graph Laplacian and thus the largest eigenvalues of the corresponding standard graph Laplacian. 

\subsection{Structure of the paper}

In Section~\ref{sec:Graphs} we explain the notation we use in this paper and give some preliminary results. Section~\ref{sec:MCGW} gives a precise formulation of the Max-Cut problem and discusses the Goemans-Williamson algorithm in more detail. In Section~\ref{sec:fe+} we introduce the signless graph Ginzburg-Landau functional $f_{\varepsilon}^+$ and use $\Gamma$-convergence techniques to prove that minimizers of $f_\e^+$ can be used to find approximate maximum cuts. We describe the signless MBO algorithm we use to find approximate minimizers of $f_\e^+$ in Section~\ref{sec:signlessMBO} and discuss the results we get in Section~\ref{sec:results}. We analyse the influence of our parameter choices in Section~\ref{sec:parameters} and conclude the paper in Section~\ref{sec:conclusions}.


\section{Setup and notation}\label{sec:Graphs}

\subsection{Graph based operators and functionals}\label{sec:setup}

In this paper we will consider non-empty finite, simple\footnote{By `simple' we mean 'without self-loops and without multiple edges between the same pair of vertices'. Note that removing self-loops from a graph does not change its maximum cut.}, undirected graphs $G = (V,E,\omega)$ without isolated nodes, with vertex set (or node set) $V$, edge set $E \subset V^2$ and non-negative edge weights $\omega$. We denote the set of all such graphs by $\mathcal{G}$.  By assumption $V$ has finite cardinality, which we denote by $n:=|V| \in \mathbb{N}$\footnote{For definiteness we use the convention $0\not\in\N$.}. We assume a node labelling such that $V=\{1, \ldots, n\}$. When $i,j\in V$ are nodes, the undirected edge between $i$ and $j$, if present, is denoted by $(i,j)$. The edge weight corresponding to this edge is $\omega_{\ij}>0$. Since $G$ is undirected, we identify $(i,j)$ with $(j,i)$ in $E$. Within this framework we can also consider unweighted graphs, which correspond to the cases in which, for all $(i,j)\in E$, $\omega_{ij}=1$.

We define $\mathcal{V}$ to be the set consisting of all node functions $u: V \to \R$ and $\mathcal{E}$ to be the set of edge functions $\varphi: E \rightarrow \R$. We will use the notation $u_i:=u(i)$ and $\varphi_{ij} := \varphi(i,j)$ for functions $u\in \mathcal{V}$ and $\varphi\in \mathcal{E}$, respectively. For notational convenience, we will typically associate $\varphi\in \mathcal{E}$ with its extension to $V^2$ obtained by setting $\varphi_{ij}=0$ if $(i,j)\not\in E$. We also extend $\omega$ to $V^2$ in this way: if $(i,j)\not\in E$, then $\omega_{ij}=0$. Because $G\in \mathcal{G}$ is undirected, we have for all $(i,j)\in E$, $\omega_{ij}=\omega_{ji}$. Because $G\in \mathcal{G}$ is simple, for all $i\in V$, $(i,i) \notin E$.  The degree of a node $i$ is $d_i := \sum_{j \in V} \omega_{ij}$. Because $G \in \mathcal{G}$ does not contain isolated nodes, we have for all $i \in V, d_i > 0$. 

As shown in \cite{hein2007}, it is possible for $\mathcal{V}$ and $\mathcal{E}$ to be defined for directed graphs, but we will not pursue these ideas here. 

To introduce the graph Laplacians and signless graph Laplacians we use and extend the structure that was used in \cite{hein2007,van2012,van2014}.
We define the inner products on $\mathcal{V}$ and $\mathcal{E}$ as
\[
\langle u,v \rangle_{\mathcal{V}} := \displaystyle\sum_{i \in V} u_iv_id_i^r, \qquad \langle \varphi,\phi \rangle_{\mathcal{E}} := \frac{1}{2}\displaystyle\sum_{i,j \in V} \varphi_{ij}\phi_{ij}\omega_{ij}^{2q-1},
\]
where $r \in [0,1]$ and $q \in [\frac{1}{2},1]$. If $r=0$ and $d_i=0$, we interpret $d_i^r$ as $0$. Similarly for $\omega_{ij}^{2q-1}$ and other such expressions below.

We define the graph gradient operator $(\nabla:\mathcal{V} \to \mathcal{E})$ by, for all $(i,j) \in E$, 
\[
(\nabla u)_{ij} := \omega_{ij}^{1-q}(u_j - u_i).
\] 
We define the graph divergence operator $(\text{div}: \mathcal{E} \to \mathcal{V})$ as the adjoint of the gradient, and a graph Laplacian operator $(\Delta_r: \mathcal{V} \to \mathcal{V})$ as the graph divergence of the graph gradient: for all $i\in V$,
\begin{equation}\label{eq:graphLaplacian}
{(\text{div}\varphi)}_i := \frac{1}{2} d_i^{-r} \sum_{j \in V} \omega_{ij}^q(\varphi_{ji} - \varphi_{ij}), \qquad
{(\Delta_r u)}_i: = (\text{div}(\nabla u))_i = d_i^{-r}\sum_{j \in V} \omega_{ij}(u_i - u_j).
\end{equation}
We note that the choices $r=0$ and $r=1$ lead to $\Delta_r$ being the unnormalised graph Laplacian and random walk graph Laplacian, respectively \cite{mohar1991,von2007}. Hence it is useful for us to explicitly incorporate $r$ in the notation $\Delta_r$ for the graph Laplacian.

In analogy with the graph gradient, divergence, and Laplacian, we now define their `signless' counterparts. We define the signless gradient operator $(\nabla^+:\mathcal{V} \to \mathcal{E})$ by, for all $(i,j) \in E$,
\[
(\nabla^+ u)_{ij} := \omega_{ij}^{1-q} (u_j + u_i).
\] 
Then we define the signless divergence operator $(\text{div}^+: \mathcal{E} \to \mathcal{V})$ to be the adjoint of the signless gradient, and the signless Laplacian operator $(\Delta_r^+: \mathcal{V} \to \mathcal{V})$ as the signless divergence of the signless gradient\footnote{In some papers the space $\mathcal{E}$ is defined as the space of all {\it skew-symmetric} edge functions. We do not require the skew-symmetry condition here, hence $\nabla^+u \in \mathcal{E}$, having $\text{div}^+$ act on $\nabla^+ u$ is consistent with our definitions, and $\text{div}^+ \varphi$ is not identically equal to $0$ for all $\varphi\in \mathcal{E}$.}: for all $i\in V$,
\[
{(\text{div}^+\varphi)}_i := \frac{1}{2} d_i^{-r} \sum_{j \in V} \omega_{ij}^q(\varphi_{ji} + \varphi_{ij}), \qquad
(\Delta_r^+ u)_i: = (\text{div}^+(\nabla^+ u))_i = d_i^{-r}\sum_{j \in V} \omega_{ij}(u_i + u_j).
\]

By definition we have
\[
\langle \nabla u,\phi \rangle_{\mathcal{E}} = \langle u,\textnormal{div} \: \phi \rangle_{\mathcal{V}}, \qquad \langle \nabla^+ u,\phi \rangle_{\mathcal{E}} = \langle u,\textnormal{div}^+\phi \rangle_{\mathcal{V}}.
\]

\begin{proposition}\label{prop:selfadjoint}
The operators $\Delta_r: \mathcal{V}\to \mathcal{V}$ and $\Delta_r^+: \mathcal{V}\to \mathcal{V}$ are self-adjoint and positive-semidefinite.
\end{proposition}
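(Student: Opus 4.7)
The plan is to exploit the adjoint identities already stated just before the proposition, namely
\[
\langle \nabla u,\phi \rangle_{\mathcal{E}} = \langle u,\textnormal{div}\,\phi \rangle_{\mathcal{V}}, \qquad \langle \nabla^+ u,\phi \rangle_{\mathcal{E}} = \langle u,\textnormal{div}^+\phi \rangle_{\mathcal{V}},
\]
together with the definitions $\Delta_r = \textnormal{div}\circ\nabla$ and $\Delta_r^+ = \textnormal{div}^+\circ \nabla^+$. Once we have these, the whole statement reduces to a two-line computation, repeated for the two operators.

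First I would prove self-adjointness of $\Delta_r$: for any $u,v\in \mathcal{V}$, apply the adjoint identity twice to obtain
\[
\langle \Delta_r u, v\rangle_{\mathcal{V}} = \langle \textnormal{div}(\nabla u), v\rangle_{\mathcal{V}} = \langle \nabla u, \nabla v\rangle_{\mathcal{E}} = \langle u, \textnormal{div}(\nabla v)\rangle_{\mathcal{V}} = \langle u, \Delta_r v\rangle_{\mathcal{V}}.
\]
Then specialising to $v=u$ gives
\[
\langle \Delta_r u, u\rangle_{\mathcal{V}} = \langle \nabla u, \nabla u\rangle_{\mathcal{E}} = \tfrac{1}{2}\sum_{i,j\in V} \omega_{ij}^{2q-1}\bigl(\omega_{ij}^{1-q}(u_j-u_i)\bigr)^2 \geq 0,
\]
which is manifestly non-negative because $\omega_{ij}\geq 0$ and $2q-1\geq 0$ (recall $q\in[\tfrac12,1]$), so every summand is non-negative. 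This delivers positive-semidefiniteness of $\Delta_r$.

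The argument for $\Delta_r^+$ is identical in structure, replacing $\nabla,\textnormal{div}$ by $\nabla^+,\textnormal{div}^+$ throughout; the relevant quadratic form is $\langle \nabla^+ u,\nabla^+ u\rangle_{\mathcal{E}}$, which expands to the non-negative sum $\tfrac12\sum_{i,j}\omega_{ij}(u_i+u_j)^2$ after the same cancellation of powers of $\omega_{ij}$. I do not expect a genuine obstacle here; the only point requiring a moment's care is verifying that $\langle\cdot,\cdot\rangle_{\mathcal{E}}$ evaluated on $\nabla u$ (or $\nabla^+ u$) is really non-negative, which hinges only on $\omega_{ij}\geq 0$ and on the stated range of $q$. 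Consequently the proof is essentially a direct application of the already-established adjointness of $\nabla$ and $\textnormal{div}$ (respectively $\nabla^+$ and $\textnormal{div}^+$), with no further machinery needed.
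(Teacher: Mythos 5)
Your proposal is correct and follows essentially the same route as the paper: both prove self-adjointness by applying the adjointness of $\nabla$ and $\textnormal{div}$ (resp.\ $\nabla^+$ and $\textnormal{div}^+$) twice, and obtain positive-semidefiniteness from $\langle u,\Delta_r u\rangle_{\mathcal{V}} = \|\nabla u\|_{\mathcal{E}}^2 \geq 0$ and its signless analogue. Your explicit expansion of the quadratic form is a harmless extra verification that the paper leaves implicit.
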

\begin{proof}
Let $u,v\in \mathcal{V}$. Since $\langle u,\Delta_r v \rangle_{\mathcal{V}} = \langle \nabla u,\nabla v \rangle_{\mathcal{E}} = \langle \Delta_r u,v \rangle_{\mathcal{V}}$ and $\langle u,\Delta_r^+ v \rangle_{\mathcal{V}} = \langle \nabla^+ u,\nabla^+ v \rangle_{\mathcal{E}} = \langle \Delta_r^+ u,v \rangle_{\mathcal{V}}$, the operators are self-adjoint. Positive-semidefiniteness follows from $\langle u,\Delta_r u \rangle_{\mathcal{V}} = \langle \nabla u, \nabla u \rangle_{\mathcal{E}} \geq 0$ and $\langle u,\Delta_r^+ u \rangle_{\mathcal{V}} = \langle \nabla^+ u, \nabla^+ u \rangle_{\mathcal{E}} \geq 0$.
\end{proof}

In the literature a third graph Laplacian is often used, besides the unnormalised and random walk graph Laplacians. This symmetrically normalised graph Laplacian \cite{chung1997} is defined by, for all $i\in V$,
\[
(\Delta_{s}u)_i := \frac{1}{\sqrt{d_i}}\sum_{j \in V}\omega_{ij}\left(\frac{u_i}{\sqrt{d_i}} - \frac{u_j}{\sqrt{d_j}}\right).
\]

This Laplacian cannot be obtained by choosing a suitable $r$ in the framework we introduced above, but will be useful to consider in practical applications. Analogously, we define the signless symmetrically normalised graph Laplacian by, for all $i\in V$,
\[
(\Delta_{s}^+u)_i := \frac{1}{\sqrt{d_i}}\sum_{j \in V}\omega_{ij}\left(\frac{u_i}{\sqrt{d_i}} + \frac{u_j}{\sqrt{d_j}}\right).
\]

There is a canonical way to represent a function $u\in \mathcal{V}$ by a vector in $\R^n$ with components $u_i$. The operators $\Delta_r$ and $\Delta_r^+$ can then be represented by the $n\times n$ matrices $L_r := D^{1-r} - D^{-r}A$ and $L^+_r := D^{1-r} + D^{-r}A$, respectively. Here $D$ is the degree matrix, i.e. the diagonal matrix with diagonal entries $D_{ii}:=d_i$, and $A$ is the weighted adjacency matrix with entries $A_{ij} := \omega_{ij}$.

 Similarly the operators $\Delta_s$ and $\Delta_s^+$ are then represented by $L_s := I - D^{-1/2}AD^{-1/2}$ and $L_s^+ := I + D^{-1/2}AD^{-1/2}$, respectively, where $I$ denotes the $n\times n$ identity matrix. Any eigenvalue-eigenvector pair $(\lambda,v)$ of $L_r$, $L_r^+$, $L_s$, $L_s^+$ corresponds via the canonical representation to an eigenvalue-eigenfunction pair $(\lambda, \phi)$ of $\Delta_r$, $\Delta_r^+$, $\Delta_s$, $\Delta_s^+$, respectively. We refer to the eigenvalue-eigenvector pair $(\lambda,v)$ as an eigenpair.

For a vertex set $S\subset V$, we define the indicator function (or characteristic function)
\[ \chi_S:=
\begin{cases}
1, & \text{if} \quad i \in S,\\
0, & \text{if} \quad i \notin S.\\
\end{cases}\]

We define the inner product norms $\|u\|_{\mathcal{V}} := \sqrt{{\langle u,u \rangle}_{\mathcal{V}}}, \: \|\phi\|_{\mathcal{E}} := \sqrt{{\langle \phi,\phi \rangle}_{\mathcal{E}}}$  which we use to define the Dirichlet energy and signless Dirichlet energy,
\[
\frac12 \|\nabla u\|_{\mathcal{E}}^2 = \frac{1}{4}\displaystyle\sum_{i,j \in V} \omega_{ij}(u_i - u_j)^2 \quad \text{and} \quad
\frac12 \|\nabla^+ u\|_{\mathcal{E}}^2 =  \frac{1}{4}\displaystyle\sum_{i,j \in V} \omega_{ij}(u_i + u_j)^2.
\]
In particular we recognise that the graph Ginzburg-Landau functional $f_\e:\mathcal{V}\to \R$ from \eqref{eq:graphGL} and the signless graph Ginzburg-Landau functional $f_\e^+:\mathcal{V}\to \R$ from \eqref{eq:signlessgraphGL} can be written as
\[
f_\e(u) = \|\nabla u\|_{\mathcal{E}}^2 + \frac1\e \sum_{i\in V} W(u_i) \quad \text{and} \quad 
f_\e^+(u) = \|\nabla^+ u\|_{\mathcal{E}}^2 + \frac1\e \sum_{i\in V} W(u_i).
\]

It is interesting to note here an important difference between the functionals $f_{\e}$ and $f_{\e}^+$. Most of the results that are derived in the literature for $f_\e$ (such as the $\Gamma$-convergence results in \cite{van2012}) do not crucially depend on the specific locations of the wells of $W$. For example, in $f_\e$ the wells are often chosen to be at $0$ and $1$, instead of at $-1$ and $1$. However, for $f_\e^+$ we have less freedom to choose the wells without drastically altering the properties of the functional. The wells have to be placed symmetrically with respect to $0$, because we want $(u_i+u_j)^2$ to be zero when $u_i$ and $u_j$ are located in different wells. In particular, we see that placing a well at $0$ would have the undesired consequence of introducing the trivial minimizer $u=0$. This points to a second, related, difference. Whereas minimization of $f_\e$ in the absence of any further constraints or additional terms in the functional leads to trivial minimizers of the form $u=c \chi_V$, where $c\in \R$ is one of the values of the wells of $W$ (so $c\in \{-1,1\}$ for our choice of $W$), minimizers of $f_\e^+$ are not constant, if the graph has more than one vertex. The following lemma gives the details.

\begin{lemma}
Let $G\in\mathcal{G}$ with $n\geq 2$, let $\e>0$, and let $u$ be a minimizer of $f_\e^+: \mathcal{V}\to\R$ as in \eqref{eq:signlessgraphGL}. Then $u$ is not a constant function.
\end{lemma}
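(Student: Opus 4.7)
The plan is a proof by contradiction. Suppose $u = c\chi_V$ is a constant minimizer of $f_\varepsilon^+$; I will construct a non-constant $v\in\mathcal{V}$ with $f_\varepsilon^+(v) < f_\varepsilon^+(c\chi_V)$. The guiding observation is that $W(x)=(x^2-1)^2$ is even, so flipping the sign of $u$ at some vertex leaves the potential $\frac{1}{\varepsilon}\sum_i W(u_i)$ unchanged, while the signless Dirichlet term $\|\nabla^+u\|_{\mathcal{E}}^2 = \frac{1}{2}\sum_{i,j}\omega_{ij}(u_i+u_j)^2$ can be strictly reduced: a constant pair contributes $(2c)^2=4c^2$ whereas a sign-flipped pair contributes $0$.

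For the main case $c\neq 0$ I would argue as follows. Since $n\geq 2$ and $G$ has no isolated vertex, pick any $k\in V$, so that $d_k>0$. Define $v$ by $v_k:=-c$ and $v_i:=c$ for $i\neq k$; then $v$ is non-constant. Because $W(-c)=W(c)$, the potentials $\sum_i W(v_i)$ and $\sum_i W(u_i)$ coincide. For the Dirichlet term, each ordered pair $(i,j)$ with $k\notin\{i,j\}$ still gives $(v_i+v_j)^2\omega_{ij}=4c^2\omega_{ij}$, while each ordered pair in which exactly one coordinate equals $k$ drops from $4c^2\omega_{kj}$ to $0$. Since $\sum_{j\in V}\omega_{kj}=d_k$ and each such edge appears twice in the ordered sum, this yields $\|\nabla^+v\|_{\mathcal{E}}^2=\|\nabla^+u\|_{\mathcal{E}}^2 -4c^2 d_k$, hence $f_\varepsilon^+(v) = f_\varepsilon^+(u) - 4c^2 d_k < f_\varepsilon^+(u)$, contradicting minimality.

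The remaining case $c=0$ is the main obstacle, because the sign-flip above collapses and produces $v=u$. Here I would instead test with a scaled cut: for any non-trivial partition $V=V_{-1}\sqcup V_1$, set $v_t:=t(\chi_{V_1}-\chi_{V_{-1}})$ and expand
\[
f_\varepsilon^+(v_t) \;=\; 2t^2 W_{\mathrm{nc}} + \frac{n}{\varepsilon}(t^2-1)^2,\qquad W_{\mathrm{nc}} := \sum_{\substack{i,j\in V\\ i,j\,\text{same side}}}\omega_{ij}.
\]
Optimising in $t^2$ and comparing to $f_\varepsilon^+(0)=n/\varepsilon$ shows that $v_t$ beats the zero function for some $t\neq 0$ precisely when the partition can be chosen so that $\varepsilon W_{\mathrm{nc}}<n$; taking $V_{\pm 1}$ to be a maximum cut minimises $W_{\mathrm{nc}}$. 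Beyond that, the cleanest route is spectral: the Hessian of $f_\varepsilon^+$ at $0$ is $2L_0^+-\tfrac{4}{\varepsilon}I$, where $L_0^+=D+A$ is the unnormalised signless Laplacian, so probing in the direction of an eigenvector corresponding to the smallest eigenvalue of $L_0^+$ produces a non-constant $v$ with $f_\varepsilon^+(v)<f_\varepsilon^+(0)$ exactly when the smallest eigenvalue of $L_0^+$ is strictly less than $2/\varepsilon$. This last inequality --- and what remains if it fails --- is where I expect the real work of the proof to lie.
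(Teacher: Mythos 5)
For $c\neq 0$ your argument is exactly the paper's proof: the paper takes the constant function $u=c\chi_V$, flips its sign at one vertex $i^*$, uses that $W$ is even to keep the potential term unchanged, and gets a strict decrease of $\|\nabla^+\cdot\|_{\mathcal{E}}^2$ from the absence of isolated nodes; your identity $f_\varepsilon^+(v)=f_\varepsilon^+(u)-4c^2d_k$ is the same computation made quantitative.

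The case $c=0$, which you flag as the main obstacle, is in fact not treated by the paper either: the strict inequality in its proof compares two sums that are both proportional to $(2c)^2$, so it is vacuous when $c=0$; the published argument silently assumes $c\neq 0$. Your Hessian computation explains why this case cannot be closed for every $\varepsilon>0$: the Hessian of $f_\varepsilon^+$ at $0$ is $2L_0^+-\tfrac{4}{\varepsilon}I$, and if $G$ is non-bipartite then $\lambda_{\min}(L_0^+)>0$, so for $\varepsilon\geq 2/\lambda_{\min}(L_0^+)$ the zero function is actually a global minimizer. Concretely, for the triangle $K_3$ with $\varepsilon=2$ one has $f_\varepsilon^+(u)=u^T(2I+A)u+\tfrac12\sum_i(u_i^2-1)^2\geq\sum_i\bigl(u_i^2+\tfrac12(u_i^2-1)^2\bigr)$, and $x\mapsto x^2+\tfrac12(x^2-1)^2$ has derivative $2x^3$ and is minimized at $x=0$; hence $u\equiv 0$ is the (constant) global minimizer and the lemma fails as stated. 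Conversely, when $\lambda_{\min}(L_0^+)<2/\varepsilon$ (in particular for all $\varepsilon>0$ if some component is bipartite, since then a bipartition indicator gives energy below $n/\varepsilon$), your eigenvector probe does dispose of $c=0$. So your proposal establishes everything the paper's proof establishes, and the unfinished case you identified is a genuine gap in the lemma and its published proof rather than in your attempt: the statement is correct only if one excludes the zero constant, e.g.\ by assuming $\varepsilon<2/\lambda_{\min}(L_0^+)$ or bipartiteness, or reads the lemma as ruling out nonzero constants only.
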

\begin{proof}
Let $c\in\R$ and $i^*\in V$. Define the functions $u, \bar u\in \mathcal{V}$ by $u:=c\chi_V$ and
\[
\bar u_i :=\begin{cases} c, & \text{if } i\neq i^*,\\ -c, & \text{if } i=i^*.\end{cases}
\]
Since $W$ is an even function, we have $\sum_{i\in V} W(\bar u_i) = \sum_{i\in V} W(u_i)$. Moreover, since for all $j\in V$, $\omega_{i^*j}=0$ or $u_j=-u_{i^*}$, we have
\[
\|\nabla^+ \bar u\|_{\mathcal{E}}^2 = \frac12 \sum_{\substack{i\in V \\ i\neq i^*}} \sum_{j\in V} \omega_{ij} (2c)^2 < \frac12 \sum_{i,j\in V} \omega_{ij} (2c)^2 = \|\nabla^+ u\|_{\mathcal{E}}^2.
\]
The inequality is strict, because per assumption $G$ has no isolated nodes and thus there is a $j\in V$ such that $\omega_{i^*j}>0$. We conclude that $f_\e^+(\bar u) < f_\e^+(u)$, which proves that $u$ is not a minimizer of $f_\e^+$.
\end{proof}

We define the graph total variation $\textnormal{TV}: \mathcal{V}\to \R$ as
\begin{equation}\label{eq:graphTV}
\textnormal{TV}(u):= \max\{\langle u, \textnormal{div}\ \varphi\rangle_{\mathcal{V}}: \varphi\in \mathcal{E}, \forall i,j\in V\ |\varphi_{ij}|\leq 1\}
= \frac12 \sum_{i,j\in V} \omega_{ij}^q |u_i-u_j|.
\end{equation}
The second expression follows since the maximum in the definition is achieved by $\varphi = \textnormal{sgn}(\nabla u)$ \cite{van2014}. We can define an analogous (signless total variation) functional $\textnormal{TV}^+:\mathcal{V}\to \R$, using the signless divergence:
\[
\textnormal{TV}^+(u):= \max\{\langle u, \textnormal{div}^+\ \varphi\rangle_{\mathcal{V}}: \varphi\in \mathcal{E}, \forall i,j\in V\ |\varphi_{ij}|\leq 1\}.
\]
\begin{lemma}
Let $u\in \mathcal{V}$, then $\textnormal{TV}^+(u) = \frac12 \sum_{i,j\in V} \omega_{ij}^q |u_i+u_j|$.
\end{lemma}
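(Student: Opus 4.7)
The plan is to mirror the derivation of the second expression for the standard graph total variation in \eqref{eq:graphTV}, simply substituting the signless divergence for the divergence. The key identity is the adjoint relation $\langle u, \textnormal{div}^+\varphi\rangle_{\mathcal{V}} = \langle \nabla^+ u, \varphi\rangle_{\mathcal{E}}$, which was recorded immediately after the definitions of $\nabla^+$ and $\textnormal{div}^+$ in Section~\ref{sec:setup}.

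First I would unfold the inner product on $\mathcal{E}$ using the definition of $\nabla^+$: for any $\varphi\in\mathcal{E}$,
\[
\langle u, \textnormal{div}^+\varphi\rangle_{\mathcal{V}} = \langle \nabla^+ u, \varphi\rangle_{\mathcal{E}} = \frac12 \sum_{i,j\in V} \omega_{ij}^{1-q}(u_i+u_j)\, \varphi_{ij}\, \omega_{ij}^{2q-1} = \frac12 \sum_{i,j\in V} \omega_{ij}^q (u_i+u_j)\, \varphi_{ij},
\]
where the exponents combine as $(1-q)+(2q-1)=q$. (The case where $\omega_{ij}=0$ is handled by the convention introduced in Section~\ref{sec:setup} that $\omega_{ij}^{2q-1}:=0$ in that case, so those terms drop out and cause no issue.)

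Next I would establish the two inequalities. The upper bound is immediate: applying $|\varphi_{ij}|\leq 1$ term by term gives
\[
\langle u, \textnormal{div}^+\varphi\rangle_{\mathcal{V}} \leq \frac12\sum_{i,j\in V}\omega_{ij}^q |u_i+u_j|.
\]
For the matching lower bound I would exhibit a maximizer by choosing $\varphi_{ij}:=\textnormal{sgn}(u_i+u_j)$ (with any convention when $u_i+u_j=0$, since such terms contribute zero); this $\varphi$ lies in $\mathcal{E}$ and satisfies $|\varphi_{ij}|\leq 1$, and the computation above then yields $\frac12 \sum_{i,j\in V}\omega_{ij}^q |u_i+u_j|$, achieving equality.

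There is no real obstacle here; the only small point to be careful about is that $\varphi$ is not required to be skew-symmetric (as emphasised in the footnote following the definition of $\textnormal{div}^+$), so the symmetric choice $\varphi_{ij}=\textnormal{sgn}(u_i+u_j)=\textnormal{sgn}(u_j+u_i)$ is admissible, which is exactly what makes the signless computation parallel the signed one in \eqref{eq:graphTV}.
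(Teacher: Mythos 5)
Your proposal is correct and follows essentially the same route as the paper: both reduce $\langle u,\textnormal{div}^+\varphi\rangle_{\mathcal{V}}$ to $\frac12\sum_{i,j\in V}\omega_{ij}^q\varphi_{ij}(u_i+u_j)$ (you via the adjoint identity, the paper by unfolding $\textnormal{div}^+$ directly and using the symmetry of $\omega$), then bound it using $|\varphi_{ij}|\leq 1$ and attain equality with $\varphi=\textnormal{sgn}(\nabla^+u)$, which is exactly your choice $\varphi_{ij}=\textnormal{sgn}(u_i+u_j)$ on edges. No gaps.
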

\begin{proof}
Let $\varphi\in \mathcal{E}$ such that, for all $i,j\in V$, $|\varphi_{ij}|\leq 1$. We compute 
\begin{align*}
\langle u, \textnormal{div}^+\ \varphi\rangle_{\mathcal{V}} &= \frac12 \sum_{i,j\in V} \omega_{ij}^q u_i (\varphi_{ji}+\varphi_{ij}) = \frac12 \sum_{i,j\in V} \omega_{ij}^q \varphi_{ij} (u_i+u_j)\\
&\leq \frac12 \sum_{i,j\in V} \omega_{ij}^q |\varphi_{ij}| |u_i+u_j|
\leq \frac12 \sum_{i,j\in V} \omega_{ij}^q |u_i+u_j|.
\end{align*}
Moreover, since $\varphi = \textnormal{sgn}(\nabla^+u)$ is an admissable choice for $\varphi$ and
\[
\langle\textnormal{sgn}\left(\nabla^+u\right), \textnormal{div}^+\ \varphi\rangle_{\mathcal{V}}  = \frac12 \sum_{i,j\in V} \omega_{ij}^q |u_i+u_j|,
\]
the result follows.
\end{proof}
Note that the total variation functional that was mentioned in Section~\ref{sec:GL} corresponds to the choice $q=1$ in \eqref{eq:graphTV}. This is the relevant choice for this paper and hence from now on we will assume that $q=1$. Note that the choice of $q$ does not have any influence on the form of the graph (signless) Laplacians.

One consequence of the choice $q=1$ is that $\TV$ and $\TV^+$ are now closely connected to cut sizes: If $S\subset V$ and $C=S|S^c$ is the cut induced by $S$, then
\begin{equation}\label{eq:TVandcut}
\textnormal{TV}\left(\chi_S-\chi_{S^c}\right) = 2 \textnormal{TV}\left(\chi_S\right) = 2 s(C) \quad \text{and} \quad \textnormal{TV}^+\left(\chi_S-\chi_{S^c}\right) = \sum_{i,j\in V} \omega_{ij} - 2 s(C).
\end{equation}
We will give a precise definition of $s(C)$ in Definition~\ref{def:sizeofcut} below.

\begin{definition}

Let $G \in \mathcal{G}$. Then $G$ is bipartite if and only if there exist $A \subset V$, $B \subset V$, such that all the conditions below are satisfied:
\begin{itemize}
\item $A \cup B= V$, 
\item $A \cap B= \emptyset$, and
\item for all $(i,j)\in E$, $i\in A$ and $j\in B$, or $i\in B$ and $j\in A$.
\end{itemize}
In that case we say that $G$ has a bipartition $(A,B)$.
\end{definition} 

\begin{definition}
An Erd\"{o}s-R\'enyi graph $G(n,p)$ is a realization of a random graph generated by the  Erd\"{o}s-R\'enyi  model, i.e. it is an unweighted, undirected, simple graph with $n$ nodes, in which, for all unordered pairs $\{i,j\}$ of distinct $i,j \in V$, an edge $(i,j) \in E$ has been generated with probability $p\in [0,1]$.
\end{definition}


\subsection{Spectral properties of the (signless) graph Laplacians}

We consider the Rayleigh quotients for $\Delta_r$ and $\Delta_r^+$ defined, for $u\in V$, as
\begin{align*}
R(u) &:= \frac{{\langle u,\Delta_r u\rangle}_{\mathcal{V}}}{{\|u\|}_{\mathcal{V}}^2} = \frac{\|\nabla u\|_{\mathcal{E}}^2}{{\|u\|}_{\mathcal{V}}^2} = \frac{\frac{1}{2}\sum_{i,j \in V} \omega_{ij} (u_i - u_j)^2}{\sum_{i \in V} d_i^r u_i^2},\\
R^+(u) &:= \frac{{\langle u,\Delta_r^+ u\rangle}_{\mathcal{V}}}{{\|u\|}_{\mathcal{V}}^2} = \frac{\|\nabla^+ u\|_{\mathcal{E}}^2}{{\|u\|}_{\mathcal{V}}^2} = \frac{\frac{1}{2}\sum_{i,j \in V} \omega_{ij} (u_i + u_j)^2}{\sum_{i \in V} d_i^r u_i^2},
\end{align*}
respectively. By Proposition~\ref{prop:selfadjoint}, $\Delta_r$ and $\Delta_r^+$ are self-adjoint and positive-semidefinite operators on $\mathcal{V}$, so their eigenvalues will be real and non-negative. The eigenvalues of $\Delta_r$ and $\Delta_r^+$ are linked to the extremal values of their Rayleigh quotients by the min-max theorem \cite{courant1965,golub2012}. In particular, if we denote by $0\leq \lambda_1 \leq \ldots \leq \lambda_n$ the (possibly repeated) eigenvalues of $\Delta^+$, then $\lambda_1=\underset{u_1 \in \mathcal{V} \backslash \{0\}}{\mathrm{min}} \: R^+(u_1)$ and $\lambda_n=\underset{u_n \in \mathcal{V} \backslash \{0\}}{\mathrm{max}} \: R^+(u_n)$.
 
In the following proposition we extend a well-known result for the graph Laplacians \cite{von2007} to include signless graph Laplacians.
\begin{proposition}\label{prop:eigenpairs}
Let $r\in [0,1]$. The following statements are equivalent:
\begin{enumerate}
\item $\lambda$ is an eigenvalue of $L_1$ with corresponding eigenvector $v$;
\item $\lambda$ is an eigenvalue of $L_s$ with corresponding eigenvector $D^{1/2}v$;
\item $2-\lambda$ is an eigenvalue of $L_1^+$ with corresponding eigenvector $v$;
\item $2-\lambda$ is an eigenvalue of $L_s^+$ with corresponding eigenvector $D^{1/2} v$;
\item $\lambda$ and $v$ are solutions of the generalized eigenvalue problem $L_r v = \lambda D^{1-r} v$.
\end{enumerate}
\end{proposition}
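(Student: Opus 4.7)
My plan is to establish the chain $(5)\Leftrightarrow(1)\Leftrightarrow(2)$ and $(1)\Leftrightarrow(3)\Leftrightarrow(4)$, using a few simple algebraic identities between the matrix representations. The key preliminary observation is that the matrices factor nicely: since $D$ is the diagonal degree matrix with strictly positive entries (because $G$ has no isolated nodes), $D^{1-r}$ is invertible, and one checks
\[
L_r = D^{1-r}\bigl(I - D^{-1}A\bigr) = D^{1-r}L_1, \qquad L_r^+ = D^{1-r}\bigl(I + D^{-1}A\bigr) = 2D^{1-r} - L_r.
\]
Both identities follow by direct computation from the definitions $L_r = D^{1-r} - D^{-r}A$ and $L_r^+ = D^{1-r} + D^{-r}A$.

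From the first factorization, the generalized eigenvalue problem in (5), $L_r v = \lambda D^{1-r}v$, is equivalent to $D^{1-r}L_1 v = \lambda D^{1-r}v$, and multiplying by $D^{r-1}$ reduces this to $L_1 v = \lambda v$, which is exactly (1). This proves $(1)\Leftrightarrow(5)$ for every $r\in[0,1]$ simultaneously and shows in particular that the solution set of the generalized problem is independent of $r$. For $(1)\Leftrightarrow(3)$, I would use the identity $L_1^+ = 2I - L_1$ (the $r=1$ case of the second factorization), so that $L_1 v = \lambda v$ is equivalent to $L_1^+ v = (2-\lambda)v$.

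For $(1)\Leftrightarrow(2)$ the plan is to conjugate by $D^{1/2}$: a direct computation gives $D^{1/2}L_1 D^{-1/2} = I - D^{-1/2}AD^{-1/2} = L_s$, i.e.\ $L_s$ and $L_1$ are similar. Therefore $L_1 v = \lambda v$ holds iff $L_s(D^{1/2}v) = \lambda(D^{1/2}v)$, which is statement (2). The equivalence $(2)\Leftrightarrow(4)$ follows by the same trick used for $(1)\Leftrightarrow(3)$, because $L_s^+ = 2I - L_s$ (which one verifies directly from the definitions).

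I do not expect any real obstacles here; the whole argument is linear algebra that hinges on the two factorization identities above and on the invertibility of $D$. The only point to be mindful of is to cite the no-isolated-nodes assumption that makes $D$ (and hence $D^{1-r}$ and $D^{1/2}$) invertible, so that all the multiplications by $D^{r-1}$ and conjugations by $D^{1/2}$ are justified.
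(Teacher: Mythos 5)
Your proposal is correct and follows essentially the same route as the paper: the identities $L_r = D^{1-r}L_1$, $L_1^+ = 2I - L_1$, $L_s = D^{1/2}L_1D^{-1/2}$, and $L_s^+ = 2I - L_s$ are exactly the ones used there to chain the five statements together. The only (welcome) difference is that you make explicit the invertibility of $D$, which the paper's converse direction for item (5) uses implicitly.
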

\begin{proof}
For $r=1$ the matrix representations of the graph Laplacian and signless graph Laplacian satisfy $L^+_1 = I + D^{-1}A = 2I - (I - D^{-1}A) = 2I - L_1$. Hence $\lambda$ is an eigenvalue of $L_1$ with corresponding eigenvector $v$ if and only if $2 - \lambda$ is an eigenvalue of $L^+_1$ with the same eigenvector.

Because $L_s = D^{1/2} L_1 D^{-1/2}$, $\lambda$ is an eigenvalue of $L_1$ with eigenvector $v$ if and only if $\lambda$ is an eigenvalue of $L_s$ with eigenvector $D^{1/2}v$. Moreover, since $L_s^+ = 2I - L_s$, we have that $2-\lambda$ is an eigenvalue of $L_s^+$ with eigenvector $D^{1/2}v$ if and only if $\lambda$ is an eigenvalue of $L_s$ with eigenvalue $D^{1/2}v$.

Finally, for $r\in [0,1]$, we have $L_r = D^{1-r}L_1$, hence $\lambda$ is an eigenvalue of $L_1$ with corresponding eigenvector $v$ if and only if $L_r v = \lambda D^{1-r} v$.
\end{proof}

Inspired by Proposition~\ref{prop:eigenpairs}, we define, for a given graph $G\in \mathcal{G}$ and node subset $S\subset V$, the rescaled indicator function $\tilde \chi_S \in \mathcal{V}$, by, for all $j\in V$,
\begin{equation}\label{eq:rescaledindicator}
\left(\tilde \chi_S\right)_j:= d^{\frac12}_j \left(\chi_S\right)_j.
\end{equation}

\begin{proposition}\label{prop:connected}
The graph $G=(V,E,\omega)\in \mathcal{G}$ has $k$ connected components if and only if $\Delta\in \{\Delta_r, \Delta_s\}$ ($r\in [0,1]$) has eigenvalue $0$ with algebraic and geometric multiplicity equal to $k$. In that case, the eigenspace corresponding to the $0$ eigenvalue is spanned by
\begin{itemize}
\item the indicator functions $\chi_{S_i}$,  if $\Delta=\Delta_r$, or 
\item the rescaled indicator functions $\tilde \chi_{S_i}$ (as in \eqref{eq:rescaledindicator}), if $\Delta=\Delta_s$.
\end{itemize}
Here the node subsets $S_i \subset V$, $i\in \{1, \ldots, k\}$, are such that each connected component of $G$ is the subgraph induced by an $S_i$. 
\end{proposition}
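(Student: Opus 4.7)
The plan is to characterize the kernel of each Laplacian as the span of (rescaled) indicator functions of connected components, and then use the self-adjointness from Proposition~\ref{prop:selfadjoint} to conclude that algebraic and geometric multiplicities agree.

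First I would handle $\Delta_r$ directly. Using self-adjointness, $\Delta_r u = 0$ implies $\langle u, \Delta_r u\rangle_{\mathcal{V}} = \|\nabla u\|_{\mathcal{E}}^2 = \tfrac12 \sum_{i,j\in V} \omega_{ij}(u_i - u_j)^2 = 0$ (recall we have fixed $q=1$), which forces $u_i = u_j$ for every pair with $\omega_{ij}>0$. Conversely, if $\nabla u = 0$ (in this sense), then direct substitution in \eqref{eq:graphLaplacian} shows $\Delta_r u = 0$. Thus $\ker \Delta_r$ consists precisely of those $u\in \mathcal{V}$ that are constant on each connected component. If $G$ has connected components induced by $S_1,\ldots,S_k$, this kernel has dimension $k$ with basis $\{\chi_{S_i}\}_{i=1}^k$, and conversely if the $0$-eigenspace has dimension $k$, the number of connected components must equal $k$ (since any such partition into components produces exactly that many linearly independent indicators). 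Because $\Delta_r$ is self-adjoint (Proposition~\ref{prop:selfadjoint}), it is diagonalisable, so algebraic and geometric multiplicities coincide.

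Next I would reduce the $\Delta_s$ case to the $\Delta_r$ case via Proposition~\ref{prop:eigenpairs}. That proposition gives that $\lambda$ is an eigenvalue of $L_1$ with eigenvector $v$ if and only if $\lambda$ is an eigenvalue of $L_s$ with eigenvector $D^{1/2}v$. Applying this with $\lambda = 0$ and $v = \chi_{S_i}$, the eigenvector for $L_s$ becomes $D^{1/2}\chi_{S_i}$, whose $j$-th entry is $d_j^{1/2}(\chi_{S_i})_j = (\tilde\chi_{S_i})_j$ from the definition \eqref{eq:rescaledindicator}. Since the map $v\mapsto D^{1/2}v$ is a linear bijection (as $G$ has no isolated nodes, so $D$ is invertible), it carries the $k$-dimensional kernel of $L_1$ onto the $k$-dimensional kernel of $L_s$, and the basis $\{\chi_{S_i}\}$ onto $\{\tilde\chi_{S_i}\}$. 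This also immediately transfers the "$k$ components iff $0$ has multiplicity $k$" equivalence from $\Delta_1$ to $\Delta_s$, and again self-adjointness ensures the two multiplicities agree.

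The main technical point is ensuring the result applies uniformly for every $r\in[0,1]$ rather than just for $r=1$. This is handled by observing that the argument in the first paragraph used only self-adjointness of $\Delta_r$ together with the identity $\langle u,\Delta_r u\rangle_{\mathcal{V}} = \|\nabla u\|_{\mathcal{E}}^2$, both of which hold for every $r\in[0,1]$; the characterisation of $\ker\Delta_r$ depends on $\omega$ but not on $r$, which is consistent with the observation (contained in the proof of Proposition~\ref{prop:eigenpairs}) that $L_r = D^{1-r}L_1$ with $D^{1-r}$ invertible, so $\ker L_r = \ker L_1$ as subspaces of $\R^n$. I do not anticipate a serious obstacle here; the only small care is the distinction between algebraic and geometric multiplicity, which self-adjointness eliminates.
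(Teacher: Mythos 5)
Your proposal is correct and takes essentially the same route as the paper: both characterise the zero-eigenspace through the vanishing of the Dirichlet form $\langle u,\Delta_r u\rangle_{\mathcal{V}}=\tfrac12\sum_{i,j}\omega_{ij}(u_i-u_j)^2$ (so kernel elements are constant on connected components and spanned by the $\chi_{S_i}$), invoke diagonalisability to identify algebraic and geometric multiplicities, and pass to $\Delta_s$ via Proposition~\ref{prop:eigenpairs} with the rescaled indicators $\tilde\chi_{S_i}=D^{1/2}\chi_{S_i}$. The only cosmetic difference is that the paper organises the dimension count by reordering nodes and block-diagonalising $L_r$ over the components, whereas you read off $\ker\Delta_r$ directly; the substance is identical.
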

\begin{proof}
We follow the proof in \cite{von2007}. First we consider the case where $\Delta=\Delta_r$, $r\in [0,1]$. We note that $\Delta_r$ is diagonizable in the $\mathcal{V}$ inner product and thus the algebraic multiplicity of any of its eigenvalues is equal to its geometric multiplicity. In this proof we will thus refer to both simply as `multiplicity'.

For any function $u \in \mathcal{V}$ we have that
$\displaystyle
\langle u, \Delta_r u \rangle_{\mathcal{V}} = \frac{1}{2}\sum_{i,j \in V}\omega_{ij}(u_i - u_j)^2.
$
We have that $0$ is an eigenvalue if and only if there exists a $u\in \mathcal{V}\setminus\{0\}$ such that
\begin{equation}\label{eq:uDeltauzero}
\langle u, \Delta_r u \rangle_{\mathcal{V}} = 0.
\end{equation}
This condition is satisfied if and only if, for all $i,j\in V$ for which $\omega_{ij}>0$, $u_i=u_j$. 

Now assume that $G$ is connected (hence $G$ has $k=1$ connected component), then \eqref{eq:uDeltauzero} is satisfied if and only if, for all $i,j\in V$, $u_i = u_j$. Therefore any eigenfunction corresponding to the eigenvalue $\lambda_1 = 0$ has to be constant, e.g. $u = \chi_V$. In particular, the multiplicity of $\lambda_1$ is 1.


Now assume that $G$ has $k \geq 2$ connected components, let $S_i$, $i\in \{1, \ldots, k\}$ be the node sets corresponding to the connected components of the graph. Via a suitable reordering of nodes $G$ will have a graph Laplacian matrix of the form
\[
L_r =  \begin{pmatrix}
  L_r^{(1)} & 0 & \cdots & 0 \\
  0 & L_r^{(2)} & \cdots & 0 \\
  \vdots  & \vdots  & \ddots & \vdots  \\
  0 & 0 & \cdots & L_r^{(k)} 
 \end{pmatrix},
\]
where each matrix $L_r^{(i)}$, $i \in \{1,\dots,k\}$ corresponds to $\Delta_r$ restricted to the connected component induced by $S_i$. This restriction is itself a graph Laplacian for that component. Because each $L_r^{(i)}$ has eigenvalue zero with multiplicity $1$, $L_r$ (and thus $\Delta_r$) has eigenvalue 0 with multiplicity $k$. We can choose the eigenvectors equal to $\chi_{S_i}$ for $i \in \{1,\dots,k\}$ by a similar argument as in the $k=1$ case.

Conversely, if $\Delta_r$ has eigenvalue $0$ with multiplicity $k$, then $G$ has $k$ connected components, because if $G$ has $l\neq k$ connected components, then by the proof above the eigenvalue $0$ has multiplicity $l\neq k$.

For $\Delta_s$ we use Proposition~\ref{prop:eigenpairs} to find that the eigenvalues are the same as those of $\Delta_r$, with the corresponding eigenfunctions rescaled as stated in the result.
\end{proof}


\begin{proposition}\label{prop:signlessspectrum}
Let $G=(V,E,\omega)\in \mathcal{G}$ have $k$ connected components and let the node subsets $S_i\subset V$, $i\in \{1, \ldots, k\}$ be such that each connected component is the subgraph induced by one of the $S_i$. We denote these subgraphs by $G_i$. Let $\Delta^+ \in \{\Delta_r^+, \Delta_s^+\}$  ($r\in [0,1]$) and let $0\leq k'\leq k$. Then $\Delta^+$ has an eigenvalue equal to 0 with algebraic and geometric multiplicity $k'$ if and only if $k'$ of the subgraphs $G_i$ are bipartite. In that case, assume the labelling is such that $G_i$, $i\in \{1, \ldots, k'\}$ are bipartite with bipartition $(T_i, S_i\setminus T_i)$, where $T_i\subset S_i$. Then the eigenspace corresponding to the 0 eigenvalue is spanned by
\begin{itemize}
\item the indicator functions  $\chi_{T_i} - \chi_{S_i\setminus T_i}$, if $\Delta^+ = \Delta_r^+$, or
\item the rescaled indicator functions $\tilde \chi_{T_i}-\tilde \chi_{S_i\setminus T_i}$ (as in \eqref{eq:rescaledindicator}), if $\Delta^+=\Delta_s^+$.
\end{itemize}
\end{proposition}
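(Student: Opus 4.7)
The plan is to follow the same strategy as Proposition~\ref{prop:connected}, replacing the ``constant on a component'' characterisation of $\ker \Delta_r$ by a ``signed bipartition'' characterisation of $\ker \Delta_r^+$. First, by Proposition~\ref{prop:selfadjoint} the operator $\Delta_r^+$ is self-adjoint (hence diagonalisable) on $\mathcal{V}$, so its algebraic and geometric multiplicities coincide, and it suffices to compute the geometric multiplicity via the identity
\[
\langle u, \Delta_r^+ u\rangle_{\mathcal{V}} = \|\nabla^+ u\|_{\mathcal{E}}^2 = \frac12 \sum_{i,j\in V} \omega_{ij}(u_i+u_j)^2.
\]
Since $\Delta_r^+$ is positive-semidefinite, $u \in \ker \Delta_r^+$ if and only if the right-hand side vanishes, i.e.\ if and only if $u_j = -u_i$ for every $(i,j)\in E$.

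Next I would analyse this pointwise condition on a single connected component $G_i$ with vertex set $S_i$. If $G_i$ is bipartite with bipartition $(T_i, S_i\setminus T_i)$, then $u := \chi_{T_i} - \chi_{S_i\setminus T_i}$ clearly satisfies $u_j = -u_i$ on every edge, and conversely any $u$ supported on $S_i$ satisfying this relation is forced, by connectedness, to be constant on $T_i$ and $S_i\setminus T_i$ with opposite signs; hence the kernel restricted to $G_i$ is one-dimensional. If, on the other hand, $G_i$ is not bipartite, it contains an odd cycle $v_0, v_1, \ldots, v_{2\ell+1} = v_0$; propagating $u_{v_{m+1}} = -u_{v_m}$ around the cycle gives $u_{v_0} = -u_{v_0}$, so $u$ vanishes on the cycle, and then by connectedness $u\equiv 0$ on $S_i$. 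After reordering $V$ so that the components are listed consecutively, $L_r^+$ is block-diagonal with one block per component, so $\ker \Delta_r^+$ is the direct sum of the component-wise kernels; it therefore has dimension equal to the number $k'$ of bipartite components, and is spanned by the indicated functions $\chi_{T_i} - \chi_{S_i\setminus T_i}$ for $i\in\{1,\ldots,k'\}$.

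For the symmetrically normalised case $\Delta^+ = \Delta_s^+$ I cannot directly use Proposition~\ref{prop:eigenpairs} (it only relates $\lambda$ to $2-\lambda$), so I would instead compute with the standard Euclidean inner product. Writing $w := D^{-1/2} v$ and expanding,
\[
v^T L_s^+ v = w^T(D+A)w = \frac12 \sum_{i,j\in V} \omega_{ij}(w_i + w_j)^2,
\]
which reduces the kernel condition for $L_s^+$ to exactly the same condition $w_j = -w_i$ on edges analysed above, but now on $w$. Applying the previous step to $w$ gives that $\ker L_s^+$ is spanned by $\chi_{T_i} - \chi_{S_i\setminus T_i}$ for the bipartite components; transforming back via $v = D^{1/2} w$ yields the spanning set $\tilde\chi_{T_i} - \tilde\chi_{S_i\setminus T_i}$ from \eqref{eq:rescaledindicator}, with the same multiplicity count.

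The main obstacle I expect is a careful handling of the ``non-bipartite implies trivial kernel on that component'' step: one has to argue that the odd-cycle contradiction actually forces $u$ to vanish on the whole component, which requires combining the sign-flipping relation with connectedness (propagating along paths from the odd cycle to any other vertex). Everything else is essentially bookkeeping: the equivalence of the two multiplicities is built into self-adjointness, the block-diagonal decomposition is standard, and the $\Delta_s^+$ case is a one-line similarity transformation.
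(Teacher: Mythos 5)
Your proof is correct and, for the $\Delta_r^+$ case, it is essentially the paper's argument: the kernel is characterised via $\langle u,\Delta_r^+u\rangle_{\mathcal{V}}=\frac12\sum_{i,j}\omega_{ij}(u_i+u_j)^2$, bipartite components contribute a one-dimensional kernel spanned by $\chi_{T_i}-\chi_{S_i\setminus T_i}$, non-bipartite components are killed by the odd-cycle sign-propagation argument combined with connectedness, and the block-diagonal structure of $L_r^+$ assembles the components; self-adjointness gives equality of algebraic and geometric multiplicities. The only place you diverge is the $\Delta_s^+$ step: the paper simply invokes Proposition~\ref{prop:eigenpairs}, whereas you redo the similarity transformation by hand, writing $v^TL_s^+v=\frac12\sum_{i,j}\omega_{ij}(w_i+w_j)^2$ with $w=D^{-1/2}v$ and transporting the kernel through the invertible map $D^{1/2}$. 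Your computation is valid (and makes the step self-contained), but your stated reason for avoiding Proposition~\ref{prop:eigenpairs} is not quite right: chaining its items (3) and (4) says precisely that $2-\lambda$ is an eigenvalue of $L_1^+$ with eigenvector $v$ if and only if $2-\lambda$ is an eigenvalue of $L_s^+$ with eigenvector $D^{1/2}v$, so taking $\lambda=2$ gives exactly the correspondence of zero-eigenspaces and the rescaling in \eqref{eq:rescaledindicator} directly, which is what the paper does.
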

\begin{proof}
First we consider the case where $\Delta^+ = \Delta_r^+, r \in [0,1]$. For any vector $u \in \mathcal{V}$ we have that
\[
\langle u, \Delta_r^+ u \rangle_{\mathcal{V}} = \frac{1}{2}\sum_{i,j \in V}\omega_{ij}(u_i + u_j)^2.
\]
Let $k=1$, then $\lambda_1 = 0$ is an eigenvalue if and only if there exists $u\in \mathcal{V}\setminus\{0\}$ such that $\langle u, \Delta_r^+ u \rangle_{\mathcal{V}} = 0$. This condition is satisfied if and only if, for all $i,j\in V$ for which $\omega_{ij}>0$ we have
\begin{equation}\label{eq:ui=-uj}
u_i=-u_j.
\end{equation} We claim that this condition in turn is satisfied if and only if $G$ is bipartite. To prove the `if' part of that claim, assume $G$ is bipartite with bipartition $(A,A^c)$ for some $A\subset V$, and define $u\in \mathcal{V}$ such that $u|_A=-1$ and $u|_{A^c}=1$. To prove the `only if' statement, assume $G$ is not bipartite, then there exists an odd cycle in $G$ \cite[Theorem 1.4]{bollobas2013}. Let $i\in V$ be a vertex on this cycle, then by applying condition \eqref{eq:ui=-uj} to all the vertices of the cycle, we find $u_i=-u_i=0$. Since $G$ is connected, it now follows, by applying condition \eqref{eq:ui=-uj} to all vertices in $V$, that $u=0$, which is a contradiction.

The argument above also shows that, if $G$ is bipartite with bipartition $(A,A^c)$, then any eigenfunction corresponding to $\lambda_1=0$ is proportional to $u = \chi_A - \chi_{A^c}$. Therefore the eigenvalue 0 has geometric multiplicity 1. Since $\Delta_r^+$ is diagonizable in the $\mathcal{V}$ inner product the algebraic multiplicity of $\lambda_1$ is be equal to its geometric multiplicity.

Now let $k \ge 2$ and let $S_i$, $i\in \{1, \ldots, k\}$ be the node sets corresponding to the connected components of the graph. Via a suitable reordering of nodes the graph $G$ will have a signless Laplacian matrix of the form
\[
L_r^+ =  \begin{pmatrix}
  L^{(1)+} & 0 & \cdots & 0 \\
  0 & L^{(2)+} & \cdots & 0 \\
  \vdots  & \vdots  & \ddots & \vdots  \\
  0 & 0 & \cdots & L^{(k)+} 
 \end{pmatrix},
\]
where each matrix $L^{(i)+}$, $i \in \{1,\dots,k\}$ corresponds to $\Delta_r^+$ restricted to the connected component induced by $S_i$. This restriction is itself a signless Laplacian for that connected component. Hence, we can apply the case $k=1$ to each component separately to find that the (algebraic and geometric) multiplicity of the eigenvalue 0 of $\Delta_r^+$  is equal to the number of connected components which are also bipartite. 

If $G$ has $k'\leq k$ connected components which are also bipartite, then, without loss of generality, assume that these components correspond to $S_i$, $i\in \{1, \ldots, k'\}$. Then the corresponding eigenspace is spanned by functions $u^{(i)} = \chi_{T_i} - \chi_{S_i\setminus T_i} \in \mathcal{V}$, $i\in \{1, \ldots, k'\}$, where $T_i\subset S_i$ and $(T_i, S_i\setminus T_i)$ is the bipartition of the bipartite component induced by $S_i$. 

For $\Delta_s^+$ we use Proposition~\ref{prop:eigenpairs} to find the appropriately rescaled eigenfunctions as given in the result.
\end{proof}

\begin{corollary}
The eigenvalues of $\Delta_1$, $\Delta^+_1$ , $\Delta_s$, and $\Delta_s^+$ are in $[0,2]$.
\end{corollary}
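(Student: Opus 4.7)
The plan is to reduce everything to the spectrum of $\Delta_1$ (equivalently $L_1$) via Proposition~\ref{prop:eigenpairs}, and to extract the two-sided bound from positive-semidefiniteness of both $\Delta_1$ and $\Delta_1^+$. Proposition~\ref{prop:eigenpairs} tells us that $\sigma(\Delta_s)=\sigma(\Delta_1)$ and $\sigma(\Delta_1^+)=\sigma(\Delta_s^+)=\{2-\lambda:\lambda\in\sigma(\Delta_1)\}$, so once the claim is established for $\Delta_1$ the remaining three cases follow immediately.

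First I would establish the lower bound: Proposition~\ref{prop:selfadjoint} gives that $\Delta_1$ is positive-semidefinite, so every eigenvalue $\lambda$ of $\Delta_1$ satisfies $\lambda\geq 0$. For the upper bound I would invoke the same proposition applied to $\Delta_1^+$: every eigenvalue $\mu$ of $\Delta_1^+$ satisfies $\mu\geq 0$. By Proposition~\ref{prop:eigenpairs}, $\mu=2-\lambda$ for some eigenvalue $\lambda$ of $\Delta_1$ (with the same eigenvector), whence $\lambda\leq 2$. Thus $\sigma(\Delta_1)\subset[0,2]$, and symmetrically $\sigma(\Delta_1^+)\subset[0,2]$.

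Finally I would read off the bounds for the symmetrically normalised versions: Proposition~\ref{prop:eigenpairs} gives that $\lambda$ is an eigenvalue of $\Delta_1$ iff it is an eigenvalue of $\Delta_s$ (with eigenvector rescaled by $D^{1/2}$), and that $2-\lambda$ is an eigenvalue of $\Delta_s^+$ iff $\lambda$ is an eigenvalue of $\Delta_1$. Therefore $\sigma(\Delta_s)=\sigma(\Delta_1)\subset[0,2]$ and $\sigma(\Delta_s^+)=\sigma(\Delta_1^+)\subset[0,2]$, completing the proof.

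The argument is essentially a bookkeeping exercise, so no real obstacle is expected; the only point to watch is using the correct direction of Proposition~\ref{prop:eigenpairs} so that the upper bound on $\Delta_1$ is inherited from non-negativity of $\Delta_1^+$ rather than circularly from itself. An equivalent one-line alternative, which I would mention as a sanity check but not use as the main argument, is the identity $R(u)+R^+(u)=2$ for $r=1$, valid for all nonzero $u\in\mathcal{V}$ and a consequence of $(u_i-u_j)^2+(u_i+u_j)^2=2(u_i^2+u_j^2)$ together with $\sum_{i,j}\omega_{ij}(u_i^2+u_j^2)=2\sum_i d_i u_i^2$; combined with the min-max theorem this immediately places both Rayleigh quotients, and hence all eigenvalues of $\Delta_1$ and $\Delta_1^+$, in $[0,2]$.
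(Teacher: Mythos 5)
Your proof is correct, but it reaches the upper bound by a different route than the paper. The paper establishes $\lambda\leq 2$ for $\Delta_1$ by a direct estimate on the Rayleigh quotient, using the pointwise inequality $(u_i-u_j)^2\leq 2(u_i^2+u_j^2)$ together with the min-max theorem, and only then transfers the bound to the other three operators via Proposition~\ref{prop:eigenpairs}. You instead avoid any computation: you take the non-negativity of the spectrum of $\Delta_1^+$ from Proposition~\ref{prop:selfadjoint} and combine it with the correspondence $\mu=2-\lambda$ from Proposition~\ref{prop:eigenpairs} to conclude $\lambda\leq 2$, which is a clean exploitation of the duality between the two operators and uses nothing beyond results already proved. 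The two arguments are of course close cousins --- the paper's inequality $(u_i-u_j)^2\leq 2(u_i^2+u_j^2)$ is precisely the statement $(u_i+u_j)^2\geq 0$ rearranged via the identity you mention, i.e.\ $R(u)+R^+(u)=2$ for $r=1$ --- but your main argument packages this at the level of spectra rather than quadratic forms, which makes the symmetry between $\Delta_1$ and $\Delta_1^+$ explicit and gives both bounds simultaneously. What the paper's version buys in exchange is self-containedness for $\Delta_1$ alone (it reproduces a known lemma without appealing to the signless operator at all). Your handling of the remaining cases and your explicit care about the direction of the correspondence, so as to avoid circularity, are both sound.
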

\begin{proof}
For $\Delta_1$ the proof can be found in \cite[Lemma 2.5]{van2014}. For completeness we reproduce it here. By Proposition~\ref{prop:selfadjoint} we know that $\Delta_1$ has non-negative eigenvalues. The upper bound is obtained by maximizing the Rayleigh quotient $R(u)$ over all nonzero $u\in \mathcal{V}$. Since $(u_i - u_j)^2 \leq 2(u_i^2 + u_j^2)$ we have that
\begin{align*}
\underset{u \in \mathcal{V}\setminus\{0\}}{\mathrm{max}}\ R(u) &= \underset{u \in \mathcal{V}\setminus\{0\}}{\mathrm{max}} \frac{\frac{1}{2}\sum_{i,j \in V} \omega_{ij} (u_i - u_j)^2}{\sum_{i \in V} d_i u_i^2} \leq \underset{u \in \mathcal{V}\setminus\{0\}}{\mathrm{max}} \frac{2 \sum_{i \in V}d_i u_i^2}{\sum_{i \in V}d_i u_i^2} = 2.\\
\end{align*}

From Proposition~\ref{prop:eigenpairs} it then follows that the eigenvalues of the other operators are in $[0,2]$ as well.
\end{proof}


\section{The Max-Cut problem and Goemans-Williamson algorithm}\label{sec:MCGW}

\subsection{Maximum cuts}

In order to identify candidate solutions to the Max-Cut problem with node functions in $\mathcal{V}$, we define the subset of binary $\{-1,1\}$-valued node functions,
\[
\mathcal{V}^b := \{u \in \mathcal{V}: \forall \: i \in V, u_i \in \{-1,1\}\}.
\]
For a given function $u \in \mathcal{V}^b$ we define the sets $V_k := \{i \in V, u_i = k\}$ for $k \in \{-1,1\}$. We say that the partition $C = V_{-1}|V_1$ is the cut induced by $u$. We define the set of all possible cuts, $\mathcal{C} := \{C: \text{there exists a } u\in \mathcal{V}^b \text{ such that } u \text{ induces the cut } C\}$.

\begin{definition}\label{def:sizeofcut}
Let $G = (V,E,\omega)\in \mathcal{G}$ and let $V_1$ and $V_{-1}$ be two disjoint subsets of $V$. The size of the cut $C = V_{-1}|V_1$ is
\[
s(C) := \sum_{\substack{i \in V_{-1}\\ j \in V_1}}\omega_{ij}.
\]
A maximum cut of $G$ is a cut $C^*\in \mathcal{C}$ such that, for all cuts $C\in\mathcal{C}$, $s(C) \leq s(C^*)$. The size of the maximum cut is
\[
\mc{G} := \underset{C\in \mathcal{C}}\max\ s(C) .
\]
\end{definition}
Note that if the cut $C$ in Definition~\ref{def:sizeofcut} is induced by $u \in \mathcal{V}^b$, then
\begin{equation}\label{eq:cutLaplacian}
s(C) = \frac{1}{4}\langle u,\Delta_r u \rangle_{\mathcal{V}}.
\end{equation}
Moreover, if $C=\emptyset|V_1$ or $C=V_{-1}|\emptyset$, then $s(C)=0$. 

\begin{definition}[Max-Cut problem]
Given a simple, undirected graph $G=(V,E,\omega)\in \mathcal{G}$, find a maximum cut for $G$.
\end{definition}
For a given $G\in \mathcal{G}$ we have $|E|<\infty$, hence a maximum cut for $G$ exists, but note that this maximum cut need not be unique.

The cardinality of the set $\mathcal{V}^b$ is equal to the total number of ways a set of $n$ elements can be partitioned into two disjoint subsets, i.e. $|\mathcal{V}^b| = 2^n$. This highlights the difficulty of finding $\mc{G}$ as $n$ increases. It has been proven that the Max-Cut problem is NP-hard \cite{garey1979}. Obtaining a performance guarantee of $\frac{16}{17}$ or better is also NP-hard \cite{trevisan2000}. The problem of determining if a cut of a given size exists on a graph is NP-complete \cite{karp}.


\subsection{The Goemans-Williamson algorithm}

The leading algorithm for polynomial time Max-Cut approximation is the Goemans-Williamson (GW) algorithm \cite{goemans1995}, which we present in detail below in Algorithm~\ref{alg:GW}. A problem equivalent to the Max-Cut problem is to find a maximizer which achieves
\[
\underset{u}{\mathrm{max}} \: \frac{1}{2} \sum_{i,j}\omega_{ij}(1 - u_iu_j) \quad \mathrm{subject \: to } \: \forall i \in V, u_i \in \{-1,1\}.
\]
The GW algorithm solves a relaxed version of this integer quadratic program, by allowing $u$ to be an $n$-dimensional vector with unit Euclidean norm. In \cite{goemans1995} it is proved that the $n$-dimensional vector relaxation is an upper bound on the original integer quadratic program. This relaxed problem is equivalent to finding a maximizer which achieves
\begin{equation}\label{eq:ZP}
Z^*_P := \underset{Y}{\mathrm{max}} \: \frac{1}{2}\sum_{i,j\in V, i < j}\omega_{ij}(1 - y_{ij}),
\end{equation}
where the maximization is over all $n$ by $n$ real positive-semidefinite matrices $Y=(y_{ij})$ with ones on the diagonal. This semidefinite program has an associated dual problem of finding a minimizer which achieves
\begin{equation}\label{eq:ZD}
Z^*_D := \frac{1}{2}\sum_{i,j \in V}\omega_{ij} + \frac{1}{4}\underset{\gamma\in \R^n}{\mathrm{min}}\sum_{i \in V} \gamma_i,
\end{equation}
subject to $A + \mathrm{diag}(\gamma)$ being positive-semidefinite, where $A$ is the adjacency matrix of $G$ and $\mathrm{diag}(\gamma)$ is the diagonal matrix with diagonal entries $\gamma_i$.

As mentioned in Section~\ref{sec:maximumcut}, Algorithm~\ref{alg:GW} is proven to have a performance guarantee of $0.878$. In Algorithm~\ref{alg:GW} below, we use the unit sphere $S_n := \{x\in \R^n: \|x\| = 1\}$, where $\|\cdot\|$ denotes the Euclidean norm on $\R^n$. For vectors $w, \tilde w \in \R^n$, $w\cdot \tilde w$ denotes the Euclidean inner product.

\begin{asm}{(GW)}
\KwData{The weighted adjacency matrix $A$ of a graph $G\in \mathcal{G}$, and a tolerance $\nu$.}

\textbf{ Relaxation step:} Use semidefinite programming to find approximate solutions $\tilde Z^*_P$ and $\tilde Z^*_D$ to \eqref{eq:ZP} and \eqref{eq:ZD}, respectively, which satisfy $|\tilde Z^*_P - \tilde Z^*_D|< \nu$. Use an incomplete Cholesky decomposition on the matrix $Y$ that achieves $\tilde Z^*_P$ in \eqref{eq:ZP} to find an approximate solution to 
\[
w^* \in \underset{w \in (S_n)^n}\argmax\ \frac{1}{2} \sum_{\substack{1\leq i, j, \leq n\\ i<j}} \omega_{ij}(1 - w_i\cdot w_j).
\]

\medskip
\textbf{ Hyperplane step:} Let $r\in S_n$ be a random vector drawn from the uniform distribution on $S_n$. Define the cut $C:=V_{-1}|V_1$, where
\[
V_1 := \{i \in V | w_i \cdot r \geq 0\} \quad \text{ and } \quad  V_{-1}: = V \setminus V_1.
\]

\caption{\label{alg:GW} The Goemans-Williamson algorithm}
\end{asm}

Other polynomial time Max-Cut approximation algorithms can be found in \cite{bylka1999,trevisan2012}. Because of the high proven performance guarantee of \ref{alg:GW}, we focus on comparing our algorithm against it. In \cite{trevisan2012} the authors use the eigenvector corresponding to the smallest eigenvalue of $\Delta_0^+$, showing that thresholding this eigenvector in a particular way achieves a Max-Cut performance guarantee of $\beta = 0.531$, which with further analysis was improved to $\beta = 0.614$ \cite{soto2015}. Algorithms which provide a solution in polynomial time exist if the graph is planar \cite{hadlock1975}, if the graph is a line graph \cite{guruswami1999}, or if the graph is weakly bipartite \cite{grotschel1981}. Comparing against \cite{trevisan2012,hadlock1975,guruswami1999,grotschel1981} is a topic of future research.


\section{$\Gamma$-convergence of $f_{\varepsilon}^+$}\label{sec:fe+}

In \eqref{eq:signlessgraphGL} we introduced the signless Ginzburg-Landau functional $f_\e^+: \mathcal{V}\to \R$. In this section we prove minimizers of $f_\e^+$ converge to solutions of the Max-Cut problem, using the tools of $\Gamma$-convergence \cite{braides2002}.

We need a concept of convergence in $\mathcal{V}$. Since we can identify $\mathcal{V}$ with $\R^n$ and all norms on $\mathbb{R}^n$ are topologicallly equivalent, the choice of a particular norm is not of great importance. For definiteness, however, we say that sequence $\{u_k\}_{k\in \N} \subset \mathcal{V}$ converges to a $u_{\infty} \in \mathcal{V}$  in $\mathcal{V}$ if and only if
$\|\hat{u}_k - \hat{u}_{\infty}\|_{\mathcal{V}} \to 0$ as $ k \to \infty$, where $\hat{u}_k,\hat{u}_{\infty} \in \mathbb{R}^n$ are the canonical vector representations of $u_k$, $u_\infty$, respectively.

We will prove that $f_\e^+$ $\Gamma$-converges to the functional $f_0^+: \mathcal{V} \to \R\cup\{+\infty\}$, which is defined as
\begin{equation}\label{eq:f0+}
f_0^+ (u) := 
 \begin{cases}
   \sum_{i,j \in V} \omega_{ij} |u_i + u_j|, &  \text{if } u \in \mathcal{V}^b, \\
   +\infty, &  \text{if } u \in \mathcal{V} \setminus \mathcal{V}^b.
  \end{cases}
\end{equation}

\begin{lemma}\label{lem:MaxCut}
Let $G\in \mathcal{G}$. For every $u\in \mathcal{V}^b$, let $C_u\in \mathcal{C}$ be the cut induced by $u$, then for all $u\in \mathcal{V}$,
\[
f_0^+(u) = \begin{cases} 2\sum_{i,j \in V} \omega_{ij} - 4s(C_u), &\text{if } u \in \mathcal{V}^b,\\
+\infty, &\text{if } u \in \mathcal{V} \setminus \mathcal{V}^b.
\end{cases}
\]
In particular, if $u^*\in \underset{u \in \mathcal{V}}\argmin\, f_\e^+(u)$, then $u^*\in \mathcal{V}^b$ and $C_{u^*}$ is a maximum cut of $G$.
\end{lemma}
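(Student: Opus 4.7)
The plan is to split the identity for $f_0^+$ according to the two branches in its definition~\eqref{eq:f0+}. The case $u\in\mathcal{V}\setminus\mathcal{V}^b$ is immediate: both sides equal $+\infty$ by definition, so there is nothing to prove there. All of the work goes into the case $u\in\mathcal{V}^b$, and after that the claim about minimizers is a short consequence.

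For $u\in\mathcal{V}^b$, the key observation is that $u_i,u_j\in\{-1,1\}$ forces $u_i+u_j\in\{-2,0,2\}$, with $|u_i+u_j|=2$ exactly when $u_i=u_j$ and $|u_i+u_j|=0$ exactly when $u_i\neq u_j$. I would split the double sum $\sum_{i,j\in V}\omega_{ij}|u_i+u_j|$ accordingly into a contribution from same-sign pairs and a contribution from opposite-sign pairs, the latter vanishing. Then I would rewrite the same-sign sum as $\sum_{i,j\in V}\omega_{ij}$ minus the opposite-sign sum, and identify the latter with $2s(C_u)$: since the sum is over ordered pairs in $V^2$, each unordered crossing pair $\{i,j\}$ with $i\in V_{-1}$, $j\in V_1$ is counted twice, and $s(C_u)$ sums each such pair once by Definition~\ref{def:sizeofcut}. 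Combining gives
\[
f_0^+(u)=\sum_{i,j\in V}\omega_{ij}|u_i+u_j|=2\sum_{i,j\in V}\omega_{ij}-4s(C_u),
\]
which is exactly the desired formula. This is essentially the same bookkeeping as in~\eqref{eq:TVandcut}, with an extra factor of $2$ coming from the relationship $f_0^+=2\,\textnormal{TV}^+$ on $\mathcal{V}^b$.

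For the second assertion, I would argue that any minimizer $u^*$ of $f_0^+$ over $\mathcal{V}$ must lie in $\mathcal{V}^b$, since $\mathcal{V}^b$ is nonempty and $f_0^+$ is finite there (e.g.\ $f_0^+(\chi_V-\chi_\emptyset)<\infty$), whereas $f_0^+\equiv+\infty$ on $\mathcal{V}\setminus\mathcal{V}^b$. Restricted to $\mathcal{V}^b$, the formula just proved shows that $f_0^+(u)$ and $s(C_u)$ differ by an affine transformation with negative slope in $s(C_u)$, so minimizing $f_0^+$ over $\mathcal{V}^b$ is equivalent to maximizing $s(C_u)$, i.e.\ $C_{u^*}$ is a maximum cut of $G$. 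I do not anticipate any real obstacle; the only subtlety worth being careful about is the ordered-pair versus unordered-pair accounting when relating the double sum $\sum_{i,j\in V}$ to $s(C_u)$, and this is resolved by the factor $2$ above.
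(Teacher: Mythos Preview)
Your proposal is correct and takes essentially the same approach as the paper: the paper's proof is a one-line citation of~\eqref{eq:TVandcut} together with the observation that $f_0^+ = 2\,\textnormal{TV}^+$ on $\mathcal{V}^b$, and you simply unpack that same ordered-pair bookkeeping explicitly (and even note the connection to~\eqref{eq:TVandcut} yourself). Your argument for the ``in particular'' clause, reading $f_\e^+$ as the evident typo for $f_0^+$, is also the intended one.
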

\begin{proof}
Because, for $u\in \mathcal{V}^b$, $f_0^+(u)=2\textnormal{TV}^+(u)$ (with $q=1$), the result follows by \eqref{eq:TVandcut}.
\end{proof}

\begin{lemma}\label{lem:minfunc}
Let $G\in\mathcal{G}$ and $\e>0$. There exist minimizers for the functionals $f_\e^+: \mathcal{V}\to\R$ and $f_0^+: \mathcal{V}\to \R\cup\{+\infty\}$ from \eqref{eq:signlessgraphGL} and \eqref{eq:f0+}, respectively. Moreover, if $u\in \mathcal{V}$ is a minimizer of $f_0^+$, then $u\in \mathcal{V}^b$.
\end{lemma}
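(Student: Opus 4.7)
The plan is to obtain both existence statements via the direct method, using crucially that $\mathcal{V}$ is finite-dimensional (we identify it with $\R^n$ through the canonical representation). For $f_\e^+$, continuity is immediate since $f_\e^+$ is a polynomial in the entries $u_i$, so by Weierstrass it suffices to produce a nonempty compact sublevel set; this reduces to a coercivity check. For $f_0^+$, the proof is essentially trivial once one observes that $\mathcal{V}^b$ is finite.

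For $f_\e^+$, the signless Dirichlet term $\|\nabla^+ u\|_{\mathcal{E}}^2$ is non-negative, so for any fixed $M\geq f_\e^+(0)=\frac{n}{\e}$, a function $u$ with $f_\e^+(u)\leq M$ satisfies $\frac1\e\sum_{i\in V} W(u_i)\leq M$, and hence $W(u_i)\leq \e M$ for each $i\in V$. Since $W(x)=(x^2-1)^2$ grows to $+\infty$ as $|x|\to\infty$, there is an $R=R(\e M)$ such that $|u_i|\leq R$ for all $i$. The sublevel set $\{f_\e^+\leq M\}$ is therefore bounded in $\R^n$, and closed by continuity, hence compact and nonempty. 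A minimizer of $f_\e^+$ over $\mathcal{V}$ then exists.

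For $f_0^+$, the set $\mathcal{V}^b$ is finite of cardinality $2^n$ and $f_0^+$ is real-valued on $\mathcal{V}^b$ (any $u\in \mathcal{V}^b$ gives a finite value, e.g.\ $u_0=\chi_V-\chi_\emptyset$ yields $f_0^+(u_0)<+\infty$), while $f_0^+\equiv+\infty$ on $\mathcal{V}\setminus\mathcal{V}^b$. Thus $\inf_\mathcal{V} f_0^+=\min_{\mathcal{V}^b} f_0^+$ is attained as the minimum of a real-valued function on a finite set, and any minimizer $u$ must satisfy $f_0^+(u)<+\infty$, which by the definition of $f_0^+$ forces $u\in\mathcal{V}^b$.

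The argument contains no real obstacle: the only substantive step is the coercivity of $f_\e^+$, which is the standard observation that the quartic double-well potential dominates at infinity, and the rest is dimension-counting on the finite set $\mathcal{V}^b$.
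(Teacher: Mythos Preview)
Your proof is correct and follows essentially the same approach as the paper: continuity of $f_\e^+$ as a polynomial together with coercivity from the double-well potential (using $\|\nabla^+ u\|_{\mathcal{E}}^2\geq 0$) gives existence via the direct method, and existence for $f_0^+$ follows because $\mathcal{V}^b$ is finite and nonempty while $f_0^+=+\infty$ off $\mathcal{V}^b$. The only cosmetic difference is that you make the sublevel-set argument explicit (with the concrete bound $W(u_i)\leq \e M$), whereas the paper phrases the same coercivity via an inequality of the form $C_2(x^2-1)\leq W(x)$ for large $|x|$ and then invokes the direct method.
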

\begin{proof}
The potential $W$ satisfies a coercivity condition in the following sense. There exist a $C_1>0$ and a $C_2$ such that, for all $x\in\R$,
\begin{equation}\label{eq:Wcoerc}
|x|\geq C_1 \Rightarrow C_2(x^2-1)\leq W(x).
\end{equation}
Combined with the fact that $\|\nabla^+u\|_{\mathcal{E}} \geq 0$, this shows that $f_\e^+$ is coercive. Since $f_\e^+$ is a (multivariate) polynomial, it is continuous. Thus, by the direct method in the calculus of variations \cite[Theorem 1.15]{dalmaso1993} $f_\e^+$ has a minimizer in $\mathcal{V}$.

Since $n\geq 1$, $\mathcal{V}^b\neq \emptyset$ and thus $\underset{u\in\mathcal{V}}\inf\, f_0^+(u)<+\infty$. In particular, any minimizer of $f_0^+$ has to be in $\mathcal{V}^b$. Since $|\mathcal{V}^b|<\infty$ the minimum is achieved.
\end{proof}

\begin{lemma}\label{lem:Gammaconvergence}
Let $G\in \mathcal{G}$ and let $f_\e^+$ and $f_0^+$ be as in \eqref{eq:signlessgraphGL} and \eqref{eq:f0+}, respectively. Then $f_\e^+$ $\Gamma$-converges to $f_0^+$ as $\e \downarrow 0$ in the following sense: If $\{\e_k\}_{k\in\N}$ is a sequence of positive real numbers such that $\e_k\downarrow 0$ as $k\to \infty$ and $u_0\in \mathcal{V}$, then the following lower bound and upper bound conditions are satisfied:
\begin{itemize}
\item[(LB)] for every sequence $\{u_k\}_{k=1}^{\infty}\subset \mathcal{V}$ such that $u_k \rightarrow u_0$ as $k\to \infty$, it holds that $f_0^+(u_0) \leq \underset{k\to\infty}\liminf\,  f_{\e_k}^+(u_k)$;

\item[(UB)] there exists a sequence $\{u_k\}_{k=1}^{\infty}\subset \mathcal{V}$ such that $u_k\to u_0$ as $k\to\infty$ and $f_0^+(u_0)\geq \underset{k\to\infty}\limsup\, f_{\e_k}^+(u_k)$.
\end{itemize}
\end{lemma}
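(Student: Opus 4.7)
Because $\mathcal{V}$ is finite-dimensional, convergence in $\mathcal{V}$ is equivalent to componentwise convergence, so both $u\mapsto \|\nabla^+ u\|_{\mathcal{E}}^2$ and $u\mapsto \sum_{i\in V} W(u_i)$ are continuous. Moreover, since $W\geq 0$ and $W(x)=0$ if and only if $x\in\{-1,1\}$, we have for every $u\in\mathcal{V}$ that $\sum_{i\in V} W(u_i)=0$ iff $u\in\mathcal{V}^b$, and for $u\in\mathcal{V}^b$ we have $(u_i+u_j)^2=2|u_i+u_j|$, so that $\|\nabla^+ u\|_{\mathcal{E}}^2 = \sum_{i,j\in V}\omega_{ij}|u_i+u_j| = f_0^+(u)$. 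These two observations are essentially all we need.

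The plan is to verify (LB) by splitting on whether $u_0\in\mathcal{V}^b$. If $u_0\in\mathcal{V}^b$, then for any sequence $u_k\to u_0$,
\[
f_{\e_k}^+(u_k)\geq \|\nabla^+ u_k\|_{\mathcal{E}}^2 \xrightarrow[k\to\infty]{} \|\nabla^+ u_0\|_{\mathcal{E}}^2 = f_0^+(u_0),
\]
which gives the required liminf inequality. If instead $u_0\notin\mathcal{V}^b$, there exists $i^*\in V$ with $(u_0)_{i^*}\notin\{-1,1\}$, so $W((u_0)_{i^*})>0$. By continuity of $W$ there exist $\delta>0$ and $K\in\N$ such that $W((u_k)_{i^*})\geq \delta$ for all $k\geq K$, and then
\[
f_{\e_k}^+(u_k)\geq \tfrac{1}{\e_k}\,W((u_k)_{i^*})\geq \tfrac{\delta}{\e_k}\to +\infty,
\]
matching $f_0^+(u_0)=+\infty$.

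For (UB), I would simply take the constant recovery sequence $u_k:=u_0$. If $u_0\in\mathcal{V}^b$, then $\sum_i W((u_0)_i)=0$, so $f_{\e_k}^+(u_k) = \|\nabla^+ u_0\|_{\mathcal{E}}^2 = f_0^+(u_0)$ for every $k$, and the limsup inequality becomes an equality. If $u_0\notin\mathcal{V}^b$, then $f_0^+(u_0)=+\infty$ and the limsup inequality holds trivially.

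There is no serious obstacle here: the usual difficulties of $\Gamma$-convergence proofs for Modica--Mortola type functionals (constructing a nontrivial recovery sequence with an optimal transition profile) disappear because the graph is finite, so the constant sequence suffices. The only point requiring a little care is the case $u_0\notin\mathcal{V}^b$ in (LB), where one must exploit that the potential blows up at a rate $1/\e_k$ on any node whose value stays bounded away from $\{-1,1\}$, which is guaranteed by continuity of $W$ together with convergence $u_k\to u_0$.
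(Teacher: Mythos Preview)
Your proof is correct and uses the same core ingredients as the paper's: nonnegativity of $W$, continuity of the signless Dirichlet energy, the identity $(u_i+u_j)^2 = 2|u_i+u_j|$ on $\mathcal{V}^b$, the constant recovery sequence, and the blow-up argument at a node with $(u_0)_{i^*}\notin\{-1,1\}$.

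The only structural difference is that the paper first shows $\Gamma$-convergence of the potential part $w_\e(u)=\frac1\e\sum_i W(u_i)$ to the indicator $w_0$ of $\mathcal{V}^b$, and then invokes the general fact that $\Gamma$-convergence is stable under continuous perturbations (\cite[Proposition~6.21]{dalmaso1993}) to add on $\|\nabla^+ u\|_{\mathcal{E}}^2$. You instead verify (LB) and (UB) directly for the full functional, dropping the nonnegative potential term in (LB) when $u_0\in\mathcal{V}^b$ and passing to the limit in the continuous Dirichlet part. Your route is slightly more elementary and self-contained (no external stability lemma needed); the paper's route is a bit more modular and makes explicit that the only nontrivial $\Gamma$-limit content sits in the potential term. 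Neither approach has any real advantage over the other here, since the finite-dimensionality of $\mathcal{V}$ makes both arguments short.
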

\begin{proof}
This proof is an adaptation of the proofs in \cite[Section 3.1]{van2012}.

Note that
\[
f_\e^+(u) = \frac12 \sum_{i,j\in V} \omega_{ij} (u_i+u_j)^2 + w_\e(u),
\]
where we define $w_\e: \mathcal{V} \to \R$ by
\[
w_{\varepsilon} (u) := \frac{1}{\varepsilon} \sum_{i \in V} W(u_i).
\]
First we prove that $w_\e$ $\Gamma$-converges to $w_0$ as $\e\downarrow 0$, where
\[
w_0(u) :=
 \begin{cases}
  0, & \text{if } u \in \mathcal{V}^b,\\
  +\infty, &\text{if } u\in \mathcal{V}\setminus\mathcal{V}^b.
  \end{cases}
\]
Let $\{\e_k\}_{k\in\N}$ is a sequence of positive real numbers such that $\e_k\downarrow 0$ as $k\to \infty$ and $u_0\in \mathcal{V}$.

(LB) Note that, for all $u \in \mathcal{V}$ we have $w_{\e}(u) \geq 0$. Let $\{u_k\}_{k=1}^{\infty}$ be a sequence such that $u_k \to u_0$ as $k \to \infty$. First we assume that $u_0 \in \mathcal{V}^b$, then
\[
w_0(u_0) = 0 \leq \underset{k\to\infty}\liminf\, w_{\e_k}(u_k).
\]
Next suppose that $u_0 \in \mathcal{V}\setminus \mathcal{V}^b$, then there is an $i\in V$ such that $(u_0)_i\not\in\{-1,1\}$. Since $u_k\to u_0$ as $k\to\infty$, for every $\eta>0$ there is an $N(\eta)\in \N$ such that for all $k\geq N(\eta)$ we have that $d_i^r |(u_0)_i - (u_k)_i| < \eta$. Define
\[
\bar\eta := \frac12 d_i^r \min\left\{|1-(u_0)_i|, |-1-(u_0)_i|\right\} > 0,
\]
then, for all $k\geq N(\bar\eta)$,
\[
|1-(u_k)_i| \geq \big| |1-(u_0)_i| - |(u_0)_i-(u_k)_i| \big| \geq \frac12 |1-(u_0)_i| >0.
\]
Similarly, for all $n\geq N(\bar\eta)$, $|-1-(u_k)_i| \geq \frac12 |-1-(u_0)_i| >0$. Hence, there is a $c>0$ such that, for all $k\geq N(\bar\eta)$, $|(u_k)_i| \leq 1-c$. Thus there is a $C>0$ such that, for all $k\geq N(\bar\eta)$, $W((u_k)_i) \geq C$. It follows that
\[
\underset{k\to\infty}\liminf\, w_{\e_k}(u_k) \geq \underset{k\to\infty}\liminf\, \frac{1}{\e_k} W((u_k)_i) = \infty = w_0(u_0).
\]
(UB) If $u_0\in \mathcal{V}\setminus\mathcal{V}^b$, then $w_0(u_0)$ and the upper bound condition is trivially satisfied. Now assume $u_0\in \mathcal{V}^b$. Define the sequence $\{u_k\}_{k=1}^{\infty}$ by, that for all $k \in \mathbb{N}$, $u_k = u_0$. Then, for all $k\in \N$, $w_{\e_k}(u_k) = 0$ and thus
$\displaystyle
\underset{k\to\infty}\limsup\, w_{\e_k}(u_k) =  0 =  w_0(u_0).
$ This concludes the proof that $w_\e$ $\Gamma$-converges to $w_0$ as $\e\downarrow 0$.

It is known that $\Gamma$-convergence is stable under continuous perturbations \cite[Proposition 6.21]{dalmaso1993}, \cite[Remark 1.7]{braides2002}; thus $w_\e + p$ $\Gamma$-converges to $w_0+p$ for any continuous $p:\mathcal{V} \to \R$. Since $u\mapsto \frac12 \sum_{i,j\in V} \omega_{ij} (u_i+u_j)^2$ is a polynomial and hence a continuous function on $\mathcal{V}$, we find that, as $\e\downarrow 0$, $f_\e^+$ $\Gamma$-converges to $g: \mathcal{V}\to \R\cup\{+\infty\}$, where
\[
g(u) := \frac12 \sum_{i,j\in V} \omega_{ij} (u_i+u_j)^2 + w_0(u).
\]
If $u\in \mathcal{V}\setminus\mathcal{V}^b$, then $g(u)=+\infty$. If $u\in \mathcal{V}^b$, then, for all $i,j\in V$, $(u_i+u_j)^2 = 2|u_i+u_j|$, hence
\[
\frac12 \sum_{i,j\in V} \omega_{ij} (u_i+u_j)^2 = \sum_{i,j \in V} \omega_{ij} |u_i+u_j|.
\] 
Thus $g=f_0^+$ and the theorem is proven.
\end{proof}


\begin{lemma}\label{lem:equicoercivity}
Let $G\in \mathcal{G}$ and let $f_\e^+$ be as in \eqref{eq:signlessgraphGL}. Let $\{\varepsilon_k\}_{k\in\N} \subset (0,\infty)$ be a sequence such that $\varepsilon_k \downarrow 0$ as $k \to \infty$, then the sequence $\{f_{\e_k}^+\}_{k\in\N}$ satisfies the following equi-coerciveness property: If $\{u_k\}_{k\in\N} \subset \mathcal{V}$ is a sequence such that there exists $C > 0$ such that, for all $k \in \mathbb{N}$, $f_{\varepsilon_k}^+ (u_k) < C.$, then there exists a subsequence $\{u_{k'}\}_{k'\in\N} \subset \{u_k\}_{k\in\N}$ and a $u_0 \in \mathcal{V}^b$ such that $u_{k'} \to u_0$ as $k \to \infty$.
\end{lemma}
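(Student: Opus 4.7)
The plan is to exploit the double-well structure of $W$: the energy bound forces each component $(u_k)_i$ close to one of the wells $\pm 1$, so the sequence must be bounded and any cluster point must be binary-valued. Since $\mathcal{V}$ is finite-dimensional, boundedness will deliver a convergent subsequence via Bolzano--Weierstrass.

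First I would discard the non-negative signless Dirichlet term $\|\nabla^+ u_k\|_{\mathcal{E}}^2$ from the assumption $f_{\e_k}^+(u_k) < C$ to retain only the potential part, yielding
\[
\sum_{i \in V} W((u_k)_i) < C \e_k.
\]
Since $W \geq 0$, this gives the pointwise bound $W((u_k)_i) \leq C \e_k$ for every $i \in V$, and in particular $W((u_k)_i) \to 0$ as $k \to \infty$ because $\e_k \downarrow 0$.

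Second, I would invoke the coercivity inequality \eqref{eq:Wcoerc} on $W$ (already used in the proof of Lemma~\ref{lem:minfunc}) to convert the pointwise bound on $W((u_k)_i)$ into a pointwise uniform bound on $|(u_k)_i|$: for each $i \in V$ and each $k$, either $|(u_k)_i| < C_1$, or else $C_2((u_k)_i^2 - 1) \leq W((u_k)_i) \leq C \e_k$. Since $\e_k$ is bounded, both alternatives give a $k$-independent bound on $|(u_k)_i|$, hence $\{u_k\}$ is bounded in $\mathcal{V} \cong \R^n$. Bolzano--Weierstrass then produces a subsequence $u_{k'}$ converging in $\mathcal{V}$ to some $u_0$.

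Finally, continuity of $W$ together with the componentwise convergence $(u_{k'})_i \to (u_0)_i$ gives $W((u_0)_i) = \lim_{k' \to \infty} W((u_{k'})_i) = 0$ for every $i \in V$. Since the zero set of $W(x) = (x^2-1)^2$ is exactly $\{-1, 1\}$, we conclude that $u_0 \in \mathcal{V}^b$, as required. I do not anticipate any substantive obstacle here; the only part deserving care is the dichotomy in applying \eqref{eq:Wcoerc}, which is routine, and the step from uniform boundedness to a convergent subsequence, which is standard in finite dimensions.
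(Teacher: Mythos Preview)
Your proposal is correct and follows the same route as the paper: drop the non-negative signless Dirichlet term, use the coercivity of $W$ to bound the components, and apply Bolzano--Weierstrass in finite dimensions. If anything, your argument is more complete than the paper's, which stops at Bolzano--Weierstrass and leaves the verification that the limit lies in $\mathcal{V}^b$ implicit; your final paragraph, using the sharper bound $W((u_k)_i) < C\e_k \to 0$ together with continuity of $W$, supplies exactly that missing detail.
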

\begin{proof}
This proof closely follows \cite[Section 3.1]{van2012}.

From the uniform bound $f_{\e_k}^+(u_k) < C$, we have that, for all $k\in\N$ and all $i\in V$, $0\leq W((u_k)_i) \leq C$. Because of the coercivity property \eqref{eq:Wcoerc} of $W$, the uniform bound on $W((u_k)_i)$ gives, for all $i\in V$, boundedness of $\{d_i^r(u_k)_i^2\}_{k\in\N}$ and thus $\{u_k\}_{k\in\N}$ is bounded in the $\mathcal{V}$-norm. The result now follows by the Bolzano-Weierstrass theorem.
\end{proof}

%
%
%



With the $\Gamma$-convergence and equi-coercivity results from Lemmas~\ref{lem:Gammaconvergence} and~\ref{lem:equicoercivity}, respectively, in place, we now prove that minimizers of $f_\e^+$ converge to solutions of the Max-Cut problem.

\begin{theorem}
Let $G\in \mathcal{G}$. Let $\{\e_k\}_{k\in\N} \subset (0,\infty)$ be a sequence such that $\e_k\downarrow 0$ as $k\to\infty$ and, for each $k\in \N$,  let $f_{\e_k}^+$ be as in \eqref{eq:signlessgraphGL} and let $u_{\e_k}$ be a minimizer of $f_{\e_k}^+$. Then there exists $u_0\in \mathcal{V}^b$ and a subsequence $\{u_{\e_{k'}}\}_{k'\in\N} \subset \{u_{\e_k}\}_{k\in\N}$,  such that $\|u_{\e_{k'}}-u_0\|_{\mathcal{V}} \to 0$ as $k'\to \infty$. Furthermore, $u_0 \in \underset{u\in \mathcal{V}}\argmin\ f_0^+(u)$, where $f_0^+$ is as in \eqref{eq:f0+}. In particular, if $C_{u_0} \in \mathcal{C}$ is the cut induced by $u_0$, then $C_{u_0}$ is a maximum cut of $G$.
\end{theorem}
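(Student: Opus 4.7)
The plan is to use the standard fundamental theorem of $\Gamma$-convergence, which combines the $\Gamma$-convergence from Lemma~\ref{lem:Gammaconvergence} with the equi-coercivity from Lemma~\ref{lem:equicoercivity} to show that minimizers of $f_{\e_k}^+$ converge (along a subsequence) to a minimizer of $f_0^+$. The Max-Cut conclusion then follows immediately from Lemma~\ref{lem:MaxCut}.

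First I would establish a uniform bound on $f_{\e_k}^+(u_{\e_k})$, so that Lemma~\ref{lem:equicoercivity} applies. The cleanest way is to pick any test function $v\in \mathcal{V}^b$ (for instance $v=\chi_V$); since $v$ takes values in $\{-1,1\}$, each $W(v_i)=0$ and hence $f_{\e_k}^+(v) = \|\nabla^+v\|_{\mathcal{E}}^2$, a constant independent of $k$. Because $u_{\e_k}$ minimizes $f_{\e_k}^+$, we get $f_{\e_k}^+(u_{\e_k}) \leq \|\nabla^+v\|_{\mathcal{E}}^2$, which is the required uniform bound. Lemma~\ref{lem:equicoercivity} then yields a subsequence $\{u_{\e_{k'}}\}$ and a $u_0\in \mathcal{V}^b$ with $u_{\e_{k'}} \to u_0$ in $\mathcal{V}$.

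Next I would show $u_0 \in \argmin_{u \in \mathcal{V}} f_0^+(u)$ via the standard two-inequality argument. By the lower-bound part (LB) of Lemma~\ref{lem:Gammaconvergence} applied to the convergent subsequence,
\[
f_0^+(u_0) \leq \liminf_{k'\to\infty} f_{\e_{k'}}^+(u_{\e_{k'}}).
\]
To compare against an arbitrary competitor $v\in \mathcal{V}$, the upper-bound part (UB) produces a recovery sequence $\{v_k\} \subset \mathcal{V}$ with $v_k\to v$ and $\limsup_{k\to\infty} f_{\e_k}^+(v_k) \leq f_0^+(v)$. Since each $u_{\e_k}$ is a minimizer, $f_{\e_k}^+(u_{\e_k}) \leq f_{\e_k}^+(v_k)$, and passing to the subsequence and taking limits yields
\[
f_0^+(u_0) \leq \liminf_{k'\to\infty} f_{\e_{k'}}^+(u_{\e_{k'}}) \leq \limsup_{k'\to\infty} f_{\e_{k'}}^+(v_{k'}) \leq f_0^+(v).
\]
As $v$ was arbitrary, $u_0$ minimizes $f_0^+$. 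Lemma~\ref{lem:minfunc} guarantees the minimum is attained, and forces $u_0\in \mathcal{V}^b$ (which is already known from the equi-coercivity step). Finally, Lemma~\ref{lem:MaxCut} identifies $C_{u_0}$ as a maximum cut of $G$, completing the proof.

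There is no real obstacle here: the two nontrivial ingredients (the $\Gamma$-convergence and equi-coercivity) are already in hand, so the theorem is a direct corollary of the abstract framework in \cite{dalmaso1993}. The only small subtlety worth being explicit about is that the recovery sequence $\{v_k\}$ from (UB) need not be a subsequence of $\{u_{\e_k}\}$, so one has to be careful to apply (LB) and (UB) along appropriately matched indices; writing out the chain of inequalities as above handles this transparently.
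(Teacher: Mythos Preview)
Your proposal is correct and follows essentially the same approach as the paper: the paper simply invokes the fundamental theorem of $\Gamma$-convergence (citing \cite[Corollary 7.20]{dalmaso1993} and \cite[Theorem 1.21]{braides2002}) to conclude directly from Lemmas~\ref{lem:Gammaconvergence} and~\ref{lem:equicoercivity} that minimizers converge (up to a subsequence) to a minimizer of $f_0^+$, and then appeals to Lemmas~\ref{lem:minfunc} and~\ref{lem:MaxCut}. You have unfolded that abstract theorem explicitly---supplying the uniform energy bound via a binary test function and writing out the (LB)/(UB) chain of inequalities---which is a faithful expansion of the same argument rather than a different route.
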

\begin{proof}
It is a well-known result from $\Gamma$-convergence theory \cite[Corollary 7.20]{dalmaso1993}, \cite[Theorem 1.21]{braides2002} that the equi-coercivity property of $\{f_{\e_k}\}_{k\in\N}$ from Lemma~\ref{lem:equicoercivity} combined with the $\Gamma$-convergence property of Lemma~\ref{lem:Gammaconvergence} implies that $\underset{u\in\mathcal{V}}\min\, f_\e^+(u)$ converge to $\underset{u\in\mathcal{V}}\min\, f_0^+(u)$ and, up to taking a subsequence, minimizers of $f_\e^+$ converge to a minimizer of $f_0^+$.

By Lemma~\ref{lem:minfunc}, if $u_0 \in \underset{u \in \mathcal{V}}\argmin\, f_\e^+(u)$, then $u_0 \in \mathcal{V}^b$. By Lemma~\ref{lem:MaxCut}, the cut $C_{u_0}$ induced by $u_0$ is a maximum cut of $G$.
\end{proof}


\section{The signless MBO algorithm}\label{sec:signlessMBO}

\subsection{Algorithm}\label{sec:algorithm}

One way of attempting to find minimizers of $f_\e^+$ is via its gradient flow \cite{ambrosio2008}. This is, for example, the method employed in \cite{bertozzi2012} to find approximate minimizers of $f_\e$. In that case the gradient flow is given by a graph-based analogue of the Allen-Cahn equation \cite{allen1979}. To find the $\mathcal{V}$-gradient flow of $f_\e^+$ we compute the first variation of the functional $f_{\e}^+$: for $t\in \R$, $u,v\in \mathcal{V}$, we have
\[
\frac{d}{dt}f_{\e}^+(u+tv) |_{t=0} = \langle \Delta_r^+u,v \rangle_{\mathcal{V}} + \frac{1}{\e}\langle D^{-r}W' \circ u, v \rangle_{\mathcal{V}},
\]
where we used the notation $(D^{-r}W' \circ u)_i = d_i^{-r}W'(u_i)$. This leads to the following $\mathcal{V}$-gradient flow: for all $i\in V$,
\begin{equation}\label{eq:gradflow}
\begin{cases}
\frac{du_i}{dt} = -(\Delta_r^+ u)_i - \frac{1}{\e}d_i^{-r}W'(u_i),& \text{for } t>0,\\
u_i = (u_0)_i,& \text{for } t=0.
\end{cases}
\end{equation}
Since $f_\e^+$ is not convex, as $t \to \infty$ the solution of the $\mathcal{V}$-gradient flow is not guaranteed to converge to a global minimum, and can get stuck in local minimizers. 

In this paper we will not attempt to directly solve the gradient flow equation. That could be the topic of future research. Instead we will use a graph MBO type scheme, which we call the signless MBO algorithm. It is given in \ref{alg:signlessMBO}. Despite there currently not being any rigorous results on the matter, the outcome of this scheme is believed to approximate minimizers of $f_\e^+$. The original MBO scheme (or threshold dynamics scheme) in the continuum was introduced to approximate motion by mean curvature flow \cite{MBO1992,MBO1993}. It consists of iteratively applying ($N$ times) two steps: diffusing a binary initial condition for a time $\tau$ and then thresholding the result back to a binary function. In the (suitably scaled) limit $\tau\downarrow 0$, $N\to\infty$, solutions of this process converge to solutions of motion by mean curvature \cite{barles1995}. It is also known that solutions the continuum Allen-Cahn equation (in the limit $\e\downarrow 0$) converge to solutions of motion by mean curvature \cite{bronsard1991}. Whether something similar is true for the graph MBO scheme or graph Allen-Cahn equation \cite{van2014} or something analogous is true for the signless graph MBO scheme are as of yet open questions, but it does suggest that solutions of the MBO scheme (signless MBO scheme) could be closely connected to minimizers of $f_\e$ ($f_\e^+$). In practice, the graph MBO scheme has proven to be a fast and accurate method for tackling approximate minimization problems of this kind \cite{merkurjev2013, bertozzi2012}.

We see in \ref{alg:signlessMBO} that in the signless diffusion step the equation that is solved is the gradient flow equation from \eqref{eq:gradflow} without the double well potential term. Since we expect the double well potential term in \eqref{eq:gradflow} to force the solution to take values close to $\pm 1$, the signless diffusion step in \ref{alg:signlessMBO} is followed by a thresholding step. Note that, despite our choice of nomenclature, the signless graph `diffusion' dynamics is expected to be significantly different from standard graph diffusion.

\begin{asm}{(MBO+)}
\KwData{A signless graph Laplacian $\Delta^+ \in \{\Delta_0^+, \Delta_1^+, \Delta_s^+\}$ corresponding to a graph $G\in \mathcal{G}$, a signless diffusion time $\tau>0$, an initial condition $\mu^0:=\chi_{S_0} - \chi_{S_0^c}$ corresponding to a node subset $S_0 \subset V$, a time step $dt$, and a stopping criterion tolerance $\eta$.}
\KwOut{A sequence of functions $\{\mu^j\}_{j=0}^{N} \subset \mathcal{V}^b$ giving the signless MBO evolution of $\mu^0$, a sequence of corresponding cuts $\{C^j\}_{j=0}^N \subset \mathcal{C}$ and their sizes $\{s(C^j)\}_{j=0}^N \subset [0,\infty)$, with largest value $s^*$.}
\For{$ j = 1 \ \KwTo \ $ \textnormal{stopping criterion is satisfied,}}{
\textbf{ Signless diffusion step:} Compute $u^*(\tau)$, where $u^*\in \mathcal{V}$ is the solution of the initial value problem
\begin{equation}\label{eq:signlessdiffusion}
\begin{cases}
\frac{du(t)}{dt} = -\Delta^+ u(t),& \text{for } t>0,\\
u(0) = \mu^j.&
\end{cases}
\end{equation}

\medskip

\textbf{ Threshold step:} Define $\mu^j \in \mathcal{V}^b$ by, for $i\in V$,
\begin{equation}\label{eq:threshold}
\mu^j_i := T(u^*_i(\tau)) := \begin{cases}
1, &\text{if } u^*_i(\tau) > 0,\\
-1, &\text{if } u^*_i(\tau) \leq 0.
\end{cases}
\end{equation}

Define the cut $C^j:=V_{-1}^j|V_1^j$, where $V_{\pm 1}^j := \{i\in V: \mu^{j}_i = \pm 1\}$ and compute $s(C^j)$.

Set $N=j$.

\If {
$\frac{\|\mu^j - \mu^{j-1}\|_2^2}{\|\mu^j\|_2^2} < \eta$}{ Stop}
}

\textbf{ Find the largest cut size: } Set $s^* := \max_{1\leq j \leq N} s(C^j)$.

%
%


\caption{\label{alg:signlessMBO} The signless graph MBO algorithm}
\end{asm}

In Figures~\ref{fig:EnergyAS8} and \ref{fig:EnergyGNutella} we show the minimization of $f_{\e}^+$ using \ref{alg:signlessMBO} with the spectral method (which is explained in Section~\ref{sec:spectral}) on the AS8 graph and the GNutella09 graph (see Section~\ref{sec:scale}). The \ref{alg:signlessMBO} iteration numbers $j$ are indicated along the $x$-axis. The $y$-axis shows the value of $f_{\e}^+(\mu^j)$. What we see in both figures is that the overall tendency is for the \ref{alg:signlessMBO} algorithm to decrease the value of $f_{\e}^+(\mu^j)$, however, in some iterations the value increases. This is why in \ref{alg:signlessMBO} we output the cut size which is largest among all iterations computed and use that as the final output, if it outperforms the cut $C$ which \ref{alg:signlessMBO} returns. Alternatively, in order to save on computing memory, one could also keep track of the largest cut size found so far in each iteration and discard the other cut sizes, or accept the final cut size $s(C^N)$ as approximation to $s^*$ . The result we report in this paper are all based on the output $s^*$.

In our experiments we choose the stopping criterion tolerance $\eta = 10^{-8}$.

\begin{figure}
\centering
\begin{subfigure}{.5\textwidth}
  \centering
  \includegraphics[width=7.5cm]{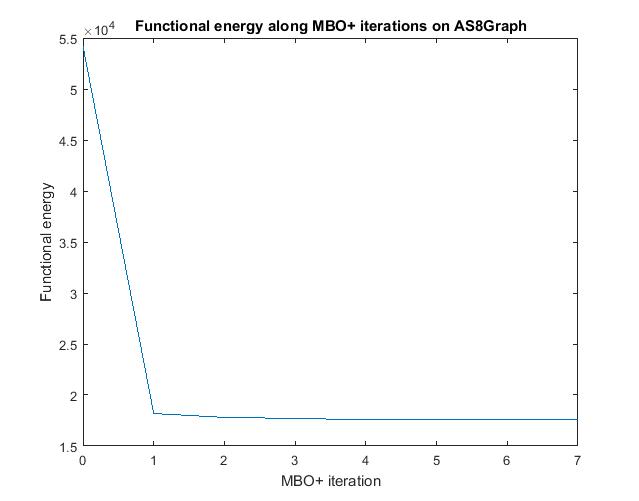}
  \label{fig:Energy1}
\end{subfigure}%
\begin{subfigure}{.5\textwidth}
  \centering
  \includegraphics[width=7.5cm]{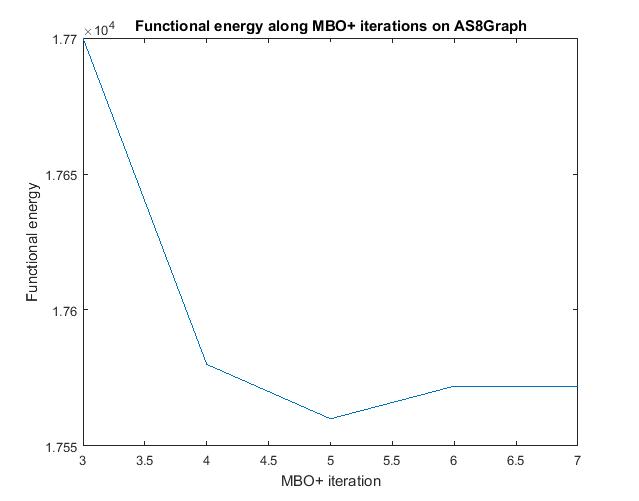}
  \label{fig:Energy3}
\end{subfigure}
\caption{The value $f_{\e}^+(\mu^j)$ as a function of the iteration number $j$ in the \ref{alg:signlessMBO} scheme on AS8Graph, using the spectral method and $\Delta_1^+$, with $K=100$, and $\tau = 20$. The left hand plot shows the initial condition and all iterations of the \ref{alg:signlessMBO} scheme on AS8Graph, where as the right hand plot displays the 3rd to the final iterations of the \ref{alg:signlessMBO} scheme on AS8Graph.} \label{fig:EnergyAS8}
\end{figure}

\begin{figure}
\centering
\begin{subfigure}{.5\textwidth}
\centering
  \includegraphics[width=7.5cm]{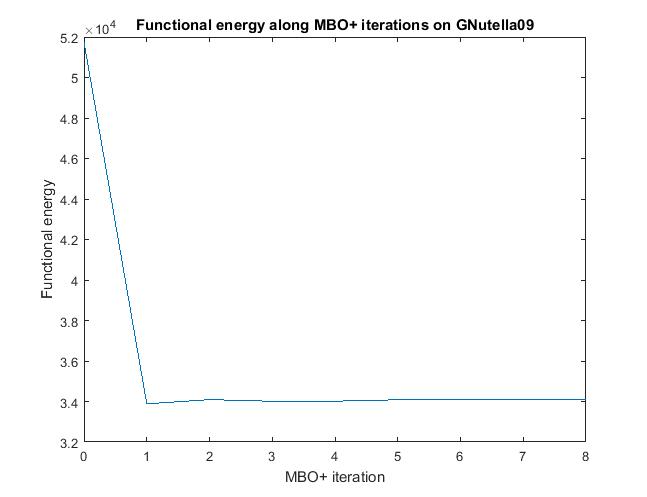}
  \label{fig:Energy2}
\end{subfigure}%
\begin{subfigure}{.5\textwidth}
\centering
  \includegraphics[width=7.5cm]{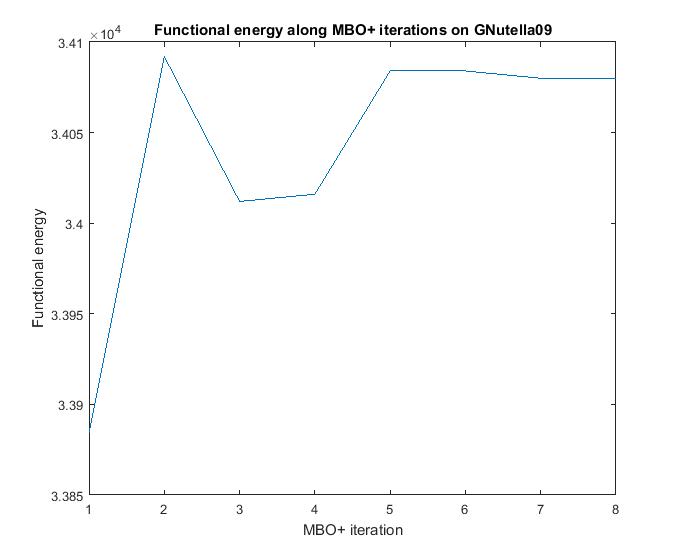}
  \label{fig:Energy4}
\end{subfigure}
\caption{The value $f_{\e}^+(\mu^j)$ as a function of the iteration number $j$ in the \ref{alg:signlessMBO} scheme on the GNutella09 graph, using the spectral method and $\Delta_1^+$, with $K=100$, and $\tau = 20$. The left hand plot shows the initial condition and all iterations of the \ref{alg:signlessMBO} scheme on GNutella09, where as the right hand plot displays all iterations of the \ref{alg:signlessMBO} scheme on GNutella09, without the initial condition.}\label{fig:EnergyGNutella}
\end{figure}


\subsection{Spectral decomposition method}\label{sec:spectral}

In this paper we will compare two implementations of the \ref{alg:signlessMBO} algorithm, which differ in the way they solve \eqref{eq:signlessdiffusion} for $t\in [0,\tau]$. In the next section we consider an explicit Euler method, but first we discuss a spectral decomposition method.
 In order to solve \eqref{eq:signlessdiffusion} we use spectral decomposition of the signless graph Laplacian $\Delta^+ \in \{\Delta^+_0, \Delta^+_1, \Delta^+_s\}$. Let $\lambda_k\geq 0$, $k\in \{1, \ldots, n\}$ be the eigenvalues of $\Delta^+$. We assume $\lambda_1 \leq \lambda_2 \leq \ldots \lambda_n$ and list eigenvalues multiple times according to their multiplicity. Let $\phi^k\in \mathcal{V}$ be an eigenfunction corresponding to $\lambda_k$, chosen such that $\{\phi_k\}_{k=1}^{n}$ is a set of orthonormal functions in $\mathcal{V}$. We then use the decomposition
\begin{equation}\label{eq:decomposition}
u^*(\tau) = \sum_{k=1}^n e^{-\lambda_k \tau}\langle \phi^k, u(0) \rangle_{\mathcal{V}} \: \phi^k
\end{equation}
to solve \eqref{eq:signlessdiffusion}.

For $\Delta_s^+$ we use the Euclidean inner product instead of the $\mathcal{V}$ inner product in \eqref{eq:decomposition}, because the Laplacian $\Delta_s^+$ is not of the form as given in \eqref{eq:graphLaplacian}. The optimal choice for $\tau$ with respect to the cut size obtained by \ref{alg:signlessMBO} is a topic for future research. Based on trial and error, we decided to use $\tau=20$ in the results we present in Section~\ref{sec:results}, when using $\Delta_1^+$ or $\Delta_s^+$ as our operator. We use $\tau = \frac{40}{\lambda_n}$ when using $\Delta_0^+$ as our operator, where $\lambda_n$ is the largest eigenvalue of $\Delta_0^+$. The division of $\tau$ by half of the largest eigenvalue of $\Delta_0^+$ is justified in Section~\ref{sec:pinning condition}. In Section~\ref{sec:parameters} we investigate how cut sizes change with varying $\tau$.

A computational advantage of the spectral decomposition method is that we do not necessarily need to use all of the eigenvalues and eigenfunctions of the signless Laplacian. We can use only the $K$ eigenfunctions corresponding to the smallest eigenvalues in our decomposition \eqref{eq:decomposition}. To be explicit, doing this replaces $n$ in \eqref{eq:decomposition} by $K$. In Section~\ref{sec:parameters} we show how increasing $K$ beyond a certain point has little effect on the size of the cut obtained by \ref{alg:signlessMBO} for three examples. We refer to using the $K$ eigenfunctions corresponding to the smallest eigenvalues in the decomposition as spectral truncation.

By Proposition~\ref{prop:eigenpairs}, we can compute the $K$ smallest eigenvalues $\lambda_k$ ($k\in \{1, \ldots, K\}$) of $\Delta_1^+$ and $\Delta_s^+$  by first computing the $K$ largest eigenvalues $\hat \lambda_l$ ($l\in \{n-K+1, \ldots, n\}$) of $\Delta_1$ and $\Delta_s$ respectively instead and then setting $\lambda_k = 2 - \hat \lambda_{n-k+1}$. There is not a similar property for $\Delta_0^+$ however. Proving upper bounds on the largest eigenvalues of $\Delta_0$ and $\Delta_0^+$ is an active area of research. \cite{guo2005, shu2002, zhang2009signless}.

We use the MATLAB \texttt{eigs} function to calculate the $K$ eigenpairs of the signless Laplacian. This function \cite{lehoucq1996} uses the Implicitly Restarted Arnoldi Method (IRAM) \cite{sorensen1997}, which can efficiently compute the largest eigenvalues and corresponding eigenvectors of sparse matrices. The function \texttt{eigs} firstly computes the orthogonal projection of the matrix you want eigenpairs from, and a random vector, onto the matrix's $K$-dimensional Krylov subspace. This projection is represented by a smaller $K \times K$ matrix. Then \texttt{eigs} calculates the eigenvalues of this $K \times K$ matrix, whose eigenvalues are called Ritz eigenvalues. The Ritz eigenvalues are computed efficiently using a QR method \cite{francis1961}. Computationally these Ritz eigenvalues typically approximate the largest eigenvalues of the original matrix. The time complexity of IRAM is currently unknown, but in practice it produces approximate eigenpairs efficiently.

If the matrix of which the eigenvalues are to be computed is symmetric, the MATLAB \texttt{eigs} function simplifies to the Implicitly Restarted Lanczos Method (IRLM) \cite{calvetti1994}, therefore typically in practice \texttt{eigs} will usually compute the eigenvalues and eigenfunctions of $\Delta_s^+$ faster than those of $\Delta_1^+$.

Using the IRLM for computing the eigenpairs of $\Delta_0^+$ corresponding to its smallest eigenvalues is inefficient. In our experiments using the MATLAB \texttt{eig} function to calculate all eigenpairs of $\Delta_0^+$ and choosing the $K$ eigenpairs corresponding to the smallest eigenvalues for the decomposition \eqref{eq:decomposition} was faster than using the IRLM to calculate the $K$ eigenpairs of $\Delta_0^+$. Hence, the results discussed in this paper are obtained with $\texttt{eig}$ when using $\Delta_0^+$ and \texttt{eigs} when using $\Delta_1^+$ or $\Delta_s^+$.

If we use the MATLAB \texttt{eigs} function when using our spectral decomposition method we cannot a priori determine the time complexity for \ref{alg:signlessMBO}, because practical experiments have shown the complexity of the IRAM and IRLM methods is heavily dependent on the matrix to which they are applied \cite{radke1996}. If we choose to use the MATLAB \texttt{eig} function then the time complexity of \ref{alg:signlessMBO} is $\mathcal{O}(n^3)$, which is the time complexity of computing all eigenpairs of an $n \times n$ matrix. All other remaining steps of \ref{alg:signlessMBO} require fewer operations to compute.


\subsection{Explicit Euler method}

We also compute the solution of \eqref{eq:signlessdiffusion} for $t \in [0,\tau]$ using an explicit finite difference scheme,
\begin{equation}\label{eq:euler}
\begin{cases}
u^{m+1} = u^{m} - \Delta^+u^{m}dt, &\text{ for } m\in \{0, 1, \ldots, M\},\\
u^0 = u(0)
\end{cases}
\end{equation}
for the same choice of $\tau$ as in \eqref{eq:decomposition}. For $M \in \mathbb{N}$, $dt = \frac{\tau}{M}$, and we set $u^*(\tau)= u^M$.

If $G \in \mathcal{G}$ then \ref{alg:signlessMBO} using the Euler method will have a time complexity of $\mathcal{O}(|E|)$, because of the sparsity of the signless Laplacian matrix. When zero entries are ignored, the multiplication of the vector $u^m$ by $\Delta^+$ takes $4|E| + 2n$ operations to compute. Since $G\in \mathcal{G}$ has no isolated nodes, $|E| \geq n-1$, therefore, when $n$ is large enough, $4|E| > 2n$ and hence the time complexity of the multiplication is $\mathcal{O}(|E|)$. All other remaining steps in \ref{alg:signlessMBO} using the Euler method require fewer operations to compute.

In Section~\ref{sec:implicit} we show some results for \ref{alg:signlessMBO} when solving \eqref{eq:signlessdiffusion} using an implicit finite difference scheme, comparing against the results of \ref{alg:signlessMBO} obtained using \eqref{eq:euler} to solve \eqref{eq:signlessdiffusion}.


\subsection{\ref{alg:signlessMBO} pinning condition}\label{sec:pinning condition}

For \ref{alg:signlessMBO} we have that choosing $\tau$ too small causes trivial dynamics in the sense that, for any $j$, $\mu^j=\mu^0$ in \ref{alg:signlessMBO}. In this section we prove a result which shows that such a $\tau$ is inversely proportional to the largest eigenvalue of the signless Laplacian chosen for \ref{alg:signlessMBO}.

We define $d_- := \underset{i \in V}{\mathrm{min}} \: d_i$, and $d_+ := \underset{i \in V}{\mathrm{max}} \: d_i$. Let $\Delta^+ \in \{\Delta_0^+,\Delta_1^+,\Delta_s^+\}$, then the operator norm $\|\Delta^+\|_{\mathcal{V}}$ is defined by
\[
\|\Delta^+\|_{\mathcal{V}} := \underset{u \in \mathcal{V} \setminus \{0\}}{\mathrm{sup}}\frac{\|\Delta^+u\|_{\mathcal{V}}}{\|u\|_{\mathcal{V}}}
\]

We define the maximum norm of $\mathcal{V}$ by $\|u\|_{\mathcal{V},\infty} := \mathrm{max}\{|u_i|: i \in V\}$.

\begin{lemma}
Let $\Delta^+ \in \{\Delta_0^+,\Delta_1^+,\Delta_s^+\}$. The operator norm $\|\Delta^+\|_{\mathcal{V}}$ and the largest eigenvalue $\lambda_n$ of $\Delta^+$ are equal. This implies that, for all $u \in \mathcal{V}$,
\[
\|\Delta^+u\|_{\mathcal{V}} \leq \lambda_n\|u\|_{\mathcal{V}}.
\]
\end{lemma}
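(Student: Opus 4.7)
The plan is to invoke the finite-dimensional spectral theorem for self-adjoint operators. First I would note that each of the three operators in question is self-adjoint with respect to an inner product on $\mathcal{V}$: for $\Delta_0^+$ and $\Delta_1^+$ this is part of Proposition~\ref{prop:selfadjoint} (with the $\mathcal{V}$-inner product taken with the matching value of $r$), while for $\Delta_s^+$ we observe that its matrix representation $L_s^+ = I + D^{-1/2}AD^{-1/2}$ is symmetric and hence self-adjoint with respect to the Euclidean inner product (which is the $\mathcal{V}$-inner product for $r=0$). Since by Proposition~\ref{prop:selfadjoint} (and its analogue for $\Delta_s^+$, which follows from the same Dirichlet-energy argument) all three operators are also positive-semidefinite, their eigenvalues are non-negative.

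Next I would apply the spectral theorem in the relevant inner product to obtain an orthonormal basis $\{\phi^k\}_{k=1}^n \subset \mathcal{V}$ of eigenfunctions with eigenvalues $0 \leq \lambda_1 \leq \cdots \leq \lambda_n$. For an arbitrary $u \in \mathcal{V}$, I would expand $u = \sum_{k=1}^n c_k \phi^k$ with $c_k = \langle u,\phi^k\rangle_{\mathcal{V}}$, so that Parseval's identity gives $\|u\|_{\mathcal{V}}^2 = \sum_k c_k^2$ and $\Delta^+ u = \sum_k \lambda_k c_k \phi^k$. A direct computation then yields
\[
\|\Delta^+ u\|_{\mathcal{V}}^2 \;=\; \sum_{k=1}^n \lambda_k^2 c_k^2 \;\leq\; \lambda_n^2 \sum_{k=1}^n c_k^2 \;=\; \lambda_n^2 \|u\|_{\mathcal{V}}^2,
\]
which, together with the definition of the operator norm, gives $\|\Delta^+\|_{\mathcal{V}} \leq \lambda_n$ and simultaneously the claimed pointwise bound $\|\Delta^+ u\|_{\mathcal{V}} \leq \lambda_n \|u\|_{\mathcal{V}}$.

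Finally, to obtain the reverse inequality I would plug $u = \phi^n$ into the definition of $\|\Delta^+\|_{\mathcal{V}}$: since $\Delta^+ \phi^n = \lambda_n \phi^n$ and $\|\phi^n\|_{\mathcal{V}}\neq 0$, we get $\|\Delta^+ \phi^n\|_{\mathcal{V}} / \|\phi^n\|_{\mathcal{V}} = \lambda_n$, hence $\|\Delta^+\|_{\mathcal{V}} \geq \lambda_n$. Combining the two directions yields equality.

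The main conceptual step is handling the three operators uniformly, which requires choosing the inner product with respect to which each operator is self-adjoint; once that is done, the argument is a textbook application of the spectral theorem and I do not anticipate any genuine obstacle. The pointwise bound is then essentially a free corollary of the operator-norm identity.
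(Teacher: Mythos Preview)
Your argument is correct and is the standard spectral-theorem proof one would expect. The paper itself does not give a proof at all; it simply cites \cite[Lemma~2.5]{van2014}. Your proposal therefore supplies what the paper omits, and the reasoning you outline---diagonalising the self-adjoint operator in the appropriate inner product, bounding via Parseval, and attaining equality at the top eigenvector---is exactly the natural route. Your care in noting that $\Delta_s^+$ requires the Euclidean ($r=0$) inner product rather than the weighted $\mathcal{V}$-inner product is well placed and consistent with the paper's own remark in Section~\ref{sec:spectral}.
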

\begin{proof}
See \cite[Lemma 2.5]{van2014}.
\end{proof}

\begin{lemma}\label{lem:normequiv}
The norms $\|\cdot\|_{\mathcal{V}}$ and $\|\cdot\|_{\mathcal{V},\infty}$ are equivalent, with optimal constants given by
\[
d_{-}^{\frac{r}{2}}\|u\|_{\mathcal{V},\infty} \leq \|u\|_{\mathcal{V}} \leq \|\chi_V\|_{\mathcal{V}} \|u\|_{\mathcal{V},\infty}.
\]
\end{lemma}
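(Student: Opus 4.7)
The plan is to establish both inequalities directly from the definitions and then exhibit explicit functions that attain equality in each, which shows the constants are optimal. Recall $\|u\|_{\mathcal{V}}^2 = \sum_{i\in V} u_i^2 d_i^r$ with $r\in[0,1]$, and $\|u\|_{\mathcal{V},\infty} = \max_{i\in V}|u_i|$, and that $d_i>0$ for all $i$ since $G$ has no isolated nodes.

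For the lower bound, I would pick an index $i^* \in \argmax_{i\in V} |u_i|$ and keep only that single term in the sum defining $\|u\|_{\mathcal{V}}^2$. This gives
\[
\|u\|_{\mathcal{V}}^2 \ge u_{i^*}^2 \, d_{i^*}^r \ge \|u\|_{\mathcal{V},\infty}^2 \, d_{-}^r,
\]
and taking square roots yields the left inequality. For the upper bound, I would factor $\|u\|_{\mathcal{V},\infty}^2$ out of the sum:
\[
\|u\|_{\mathcal{V}}^2 = \sum_{i\in V} u_i^2 d_i^r \le \|u\|_{\mathcal{V},\infty}^2 \sum_{i\in V} d_i^r = \|u\|_{\mathcal{V},\infty}^2 \, \|\chi_V\|_{\mathcal{V}}^2,
\]
again taking square roots.

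To verify optimality of the constants, I would exhibit extremal functions. For the left inequality, pick $i_-\in\argmin_{i\in V} d_i$ and take $u=\chi_{\{i_-\}}$; then $\|u\|_{\mathcal{V},\infty}=1$ and $\|u\|_{\mathcal{V}}^2 = d_{-}^r$, so equality holds and the constant $d_{-}^{r/2}$ cannot be enlarged. For the right inequality, take $u = \chi_V$; then $\|u\|_{\mathcal{V},\infty}=1$ and $\|u\|_{\mathcal{V}} = \|\chi_V\|_{\mathcal{V}}$, so the constant $\|\chi_V\|_{\mathcal{V}}$ cannot be reduced.

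There is no substantive obstacle here; the only mild subtlety is handling the degenerate convention $d_i^r$ when $r=0$ and interpreting $d_-^{r/2}$ accordingly, but since the graph has no isolated nodes every $d_i$ is strictly positive and the stated inequalities and witnesses go through without modification.
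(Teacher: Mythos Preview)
Your argument is correct and complete: the two inequalities follow immediately from bounding the sum $\sum_i u_i^2 d_i^r$ below by a single term and above by $\|u\|_{\mathcal{V},\infty}^2\sum_i d_i^r$, and your witnesses $\chi_{\{i_-\}}$ and $\chi_V$ establish sharpness of both constants. The paper does not supply its own proof here but simply cites \cite[Lemma~2.2]{van2014}; your direct computation is exactly the standard argument one would expect to find in that reference, so there is nothing to compare.
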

\begin{proof}
See \cite[Lemma 2.2]{van2014}.
\end{proof}

\begin{theorem}\label{thm:pin}
Let $G \in \mathcal{G}$, and let $\lambda_n$ be the largest eigenvalue of the signless Laplacian $\Delta^+ \in \{\Delta_0^+,\Delta_1^+,\Delta_s^+\}$. Let $S_0\subset V$, $\mu^0:= \chi_{S_0}-\chi_{S_0^c}$, and let $\mu^1\in \mathcal{V}^b$ be the result of applying one \ref{alg:signlessMBO} iteration to $\mu^0$. If
\begin{equation}\label{eq:pin}
\tau < \lambda_n^{-1}\mathrm{log}(1 + d_{-}^{\frac{r}{2}}\|\chi_V\|_{\mathcal{V}}^{-1}),
\end{equation} 
then $\mu^1=\mu^0$.
\end{theorem}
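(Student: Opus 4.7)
The plan is to translate the pinning condition $\mu^1=\mu^0$ into a sufficient smallness condition on $u^*(\tau)-\mu^0$ in the max norm, then control that quantity using the semigroup representation $u^*(\tau)=e^{-\tau\Delta^+}\mu^0$ together with the two preceding lemmas.

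First I would observe that because $\mu^0\in\mathcal{V}^b$, the threshold definition \eqref{eq:threshold} gives $\mu^1_i=\mu^0_i$ for every $i\in V$ as soon as
\[
\|u^*(\tau)-\mu^0\|_{\mathcal{V},\infty}<1.
\]
Indeed, if $\mu^0_i=1$ then $|u^*_i(\tau)-1|<1$ forces $u^*_i(\tau)>0$, while if $\mu^0_i=-1$ then $|u^*_i(\tau)+1|<1$ forces $u^*_i(\tau)<0\le 0$; in both cases $T(u^*_i(\tau))=\mu^0_i$. So the whole task reduces to bounding $\|u^*(\tau)-\mu^0\|_{\mathcal{V},\infty}$.

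Next I would write $u^*(\tau)=e^{-\tau\Delta^+}\mu^0$ (legitimate since $\Delta^+$ is self-adjoint by Proposition~\ref{prop:selfadjoint}, so the exponential is defined via its spectral expansion, and the resulting function solves \eqref{eq:signlessdiffusion}). Expanding in a norm-convergent power series and applying the operator-norm bound $\|\Delta^+u\|_{\mathcal{V}}\le\lambda_n\|u\|_{\mathcal{V}}$ from the first lemma in Section~\ref{sec:pinning condition} term by term gives
\[
\|u^*(\tau)-\mu^0\|_{\mathcal{V}}=\Big\|\sum_{k=1}^\infty\frac{(-\tau\Delta^+)^k}{k!}\mu^0\Big\|_{\mathcal{V}}\le\sum_{k=1}^\infty\frac{(\tau\lambda_n)^k}{k!}\|\mu^0\|_{\mathcal{V}}=(e^{\tau\lambda_n}-1)\|\mu^0\|_{\mathcal{V}}.
\]
Then I would apply Lemma~\ref{lem:normequiv} on both ends of this chain: the lower bound $d_-^{r/2}\|\cdot\|_{\mathcal{V},\infty}\le\|\cdot\|_{\mathcal{V}}$ on the left, and the upper bound $\|\mu^0\|_{\mathcal{V}}\le\|\chi_V\|_{\mathcal{V}}\|\mu^0\|_{\mathcal{V},\infty}=\|\chi_V\|_{\mathcal{V}}$ on the right (using $\|\mu^0\|_{\mathcal{V},\infty}=1$ since $\mu^0\in\mathcal{V}^b$). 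This yields
\[
\|u^*(\tau)-\mu^0\|_{\mathcal{V},\infty}\le d_-^{-r/2}\|\chi_V\|_{\mathcal{V}}\,(e^{\tau\lambda_n}-1).
\]

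Finally I would demand that the right-hand side be strictly less than $1$; rearranging gives $e^{\tau\lambda_n}<1+d_-^{r/2}\|\chi_V\|_{\mathcal{V}}^{-1}$, which is exactly the hypothesis \eqref{eq:pin} after taking logarithms and dividing by $\lambda_n>0$. Combining with the reduction in the first paragraph completes the proof. No step looks delicate; the only thing to watch is the asymmetry of the threshold at $0$, which is why I phrase the sufficient condition with strict inequality, and the need that $\lambda_n>0$ (which holds because otherwise $\Delta^+\equiv 0$, forcing $u^*(\tau)=\mu^0$ and making the statement vacuous).
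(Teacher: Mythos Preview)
Your proposal is correct and follows essentially the same route as the paper: reduce to $\|e^{-\tau\Delta^+}\mu^0-\mu^0\|_{\mathcal{V},\infty}<1$, pass to the $\mathcal{V}$-norm via Lemma~\ref{lem:normequiv}, bound the exponential series using $\|\Delta^+\|_{\mathcal{V}}=\lambda_n$, and rearrange. The only cosmetic difference is that the paper computes $\|\mu^0\|_{\mathcal{V}}=\|\chi_V\|_{\mathcal{V}}$ directly from the orthogonality $\langle\chi_{S_0},\chi_{S_0^c}\rangle_{\mathcal{V}}=0$, whereas you obtain the same bound from the upper inequality in Lemma~\ref{lem:normequiv}.
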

\begin{proof}
This proof closely follows the proof of a similar result in \cite[Section 4.2]{van2014}.

If $\|e^{-\tau\Delta^+}\mu^0 - \mu^0\|_{\mathcal{V},\infty} < 1$, then $\mu^1=\mu^0$. Using Lemma~\ref{lem:normequiv}, we compute
\[
\|e^{-\tau\Delta^+}\mu^0 - \mu^0\|_{\mathcal{V},\infty} \leq d_{-}^{-\frac{r}{2}} \|e^{-\tau\Delta^+}\mu^0 - \mu^0\|_{\mathcal{V}} 
\leq d_{-}^{-\frac{r}{2}} \|e^{-\tau\Delta^+} - \mathrm{Id} \|_{\mathcal{V}} \, \|\mu^0\|_{\mathcal{V}} .
\]
Moreover, since $\langle \chi_{S_0}, \chi_{S_0^c}\rangle_\mathcal{V} = 0$, we have 
$
\|\mu^0\|_{\mathcal{V}}^2 = \|\chi_{S_0}\|_{\mathcal{V}}^2 + \|\chi_{S_0^c}\|_{\mathcal{V}}^2 = \|\chi_{S_0}+\chi_{S_0^c}\|_{\mathcal{V}}^2 = \|\chi_V\|_{\mathcal{V}}^2.
$

Using the triangle inequality and the submultiplicative property (see \cite{rudin1964} for example) of $\|\cdot\|_{\mathcal{V}}$, we compute
$
\|e^{-\tau \Delta^+} - \mathrm{Id}\|_{\mathcal{V}} \leq \sum_{k=1}^{\infty}\frac{1}{k!}(\tau\|\Delta^+\|_{\mathcal{V}})^k = e^{\lambda_n\tau} - 1.
$ 
Therefore, if $\tau < \lambda_n^{-1}\mathrm{log}(1 + d_{-}^{\frac{r}{2}}\|\chi_V\|^{-1})$, then $\mu^1=\mu^0$.
\end{proof}

As stated in Section~\ref{sec:spectral}, we choose $\tau = 20$ as diffusion time for \ref{alg:signlessMBO} using $\Delta_1^+$ or $\Delta_s^+$, and $\tau = \frac{40}{\lambda_n}$ when using \ref{alg:signlessMBO} with $\Delta_0^+$ as the choice of operator. This is due to $\tau = 20$ often being too large when using \ref{alg:signlessMBO} with $\Delta_0^+$. Choosing $\tau = 20$ for \ref{alg:signlessMBO} using $\Delta_0^+$ causes the solution to converge to $u(\tau) = 0$ to machine precision. We therefore choose $\tau = \frac{40}{\lambda_n}$ for $\Delta_0^+$ since \ref{thm:pin} implies a suitable choice of $\tau$ for \ref{alg:signlessMBO} with respect to obtaining non-trivial output cuts is inversely proportional to the largest eigenvalue of the chosen operator $\Delta^+$. Since $\lambda_n = 2$ for $\Delta_1^+$ and $\Delta_s^+$ we choose to divide $\tau$ by $\frac{\lambda_n}{2}$ for $\Delta_0^+$.


\section{Results}\label{sec:results}

\subsection{Method}\label{sec:method}

\begin{figure}
\centering
\begin{subfigure}{.5\textwidth}
  \centering
  \includegraphics[width=7cm]{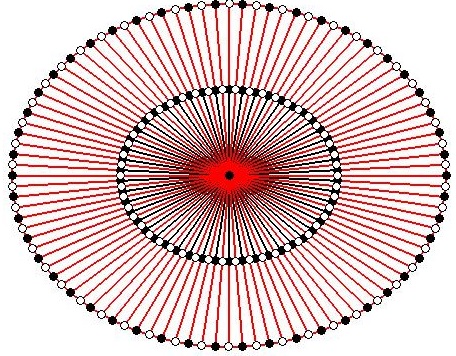}
  \caption{Web graph, maximum cut approximation}\label{fig:webgraph}
\end{subfigure}%
\begin{subfigure}{.5\textwidth}
  \centering
  \includegraphics[width=7cm]{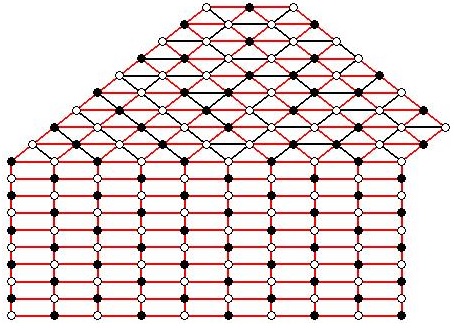}
  \caption{Square-triangle mesh maximum cut approximation}\label{fig:squaretriangle}
\end{subfigure}
\caption{Visualisation of maximum cut approximations (best viewed in colour)}\label{fig:visualisation}
\end{figure}

In Section~\ref{sec:results} we compare the results of our new algorithm \ref{alg:signlessMBO} with the results obtained by \ref{alg:GW}. In Sections~\ref{sec:Rand}--\ref{sec:large} we display the results of \ref{alg:signlessMBO} using both the spectral decomposition method and the explicit Euler method, fixing the variable $\tau$ for both methods. We run all our tests on a Windows 7 PC with 16GB RAM and an Intel(R) Core(TM) i5-4590 CPU with clock speed 3.30GHz. For both \ref{alg:signlessMBO} and \ref{alg:GW} we use MATLAB, which is convenient to use when dealing with large sparse matrices.

For all of our tests using the spectral decomposition method we choose $K = \floor{\frac{n}{100}}$. In practice it reduces the computation time without sacrificing much accuracy in the cut approximations. We further analyse this choice in Section~\ref{sec:parameters}.  For all of our tests using the Euler method we set $M=100$, in order to keep $dt$ small so as to ensure stability on our explicit scheme. We compute the \ref{alg:signlessMBO} evolutions for 50 initial conditions chosen at random from $\mathcal{V}^b$. In the tables which we refer to in this section, we state the greatest (Best), average (Avg), and smallest (Least) sizes of cuts obtained by these 50 runs of \ref{alg:signlessMBO}. We run \ref{alg:signlessMBO} using $\Delta_0^+$, $\Delta_1^+$ and $\Delta_s^+$, fixing the initial conditions for each operator, using both the spectral method and the Euler method for each operator, and compare the results. 

We compare the results of \ref{alg:signlessMBO} with those of \ref{alg:GW}. To compute the relaxation step of \ref{alg:GW} we use SDPT3 MATLAB software \cite{tutuncu2003} as it exploits the sparse structure of the matrices we work on. According to \cite{mittelmann2012} it is best suited for both smaller problems and for larger problems with sparse matrices. The stopping tolerance is set as $|Z_P^* - Z_D^*| < 10^{-6}$. The recommended tolerance for the SDPT3 software is set as $10^{-8}$. However, in our experiments increasing this tolerance to $10^{-6}$ reduced the computation time of \ref{alg:GW}, without any change in output cut sizes.  After the relaxation step, we perform the hyperplane step 50 times, randomly choosing a vector $r$ each time. Each choice of $r$ leads to a resulting cut; in the tables referred to in this section, we list the highest (Best), average (Avg), and lowest (Least) sizes of these cuts. In each of these categories in our tables we highlight the method that obtained the best result, \ref{alg:signlessMBO} using $\Delta_0^+$, \ref{alg:signlessMBO} using $\Delta_1^+$, \ref{alg:signlessMBO} using $\Delta_s^+$, or \ref{alg:GW}. We do the same for the run times (Time) of each method.

For both \ref{alg:signlessMBO} and \ref{alg:GW} only the adjacency matrix and the parameter choice $\eta$ is initially provided, therefore the reported run times cover all calculations from that starting point. For each graph we remove the isolated nodes by removing all rows and columns of the graph's adjacency matrix which have all zero entries. (This does not affect the size of any cut of the graph.) For the spectral decompostion variant of \ref{alg:signlessMBO} using $\Delta_1^+$ and $\Delta_s^+$ this includes removing all isolated nodes, computing the matrices $L_1$ and $L_s$, finding their $K$ eigenpairs corresponding to the leading eigenvalues in order to use Proposition~\ref{prop:eigenpairs}, to compute the eigenpairs corresponding to the trailing eigenvalues of $L_1^+$ and $L_s^+$ respectively, generating initial conditions, running the signless diffusion and thresholding steps, and computing the size of the cut from each MBO iteration. For $\Delta_s^+$ the computation time includes calculating $L_1$ in order to compute the size of the output cuts using \eqref{eq:cutLaplacian}. The computation time for \ref{alg:signlessMBO} using $\Delta_0^+$ includes removing all isolated nodes, computing the matrix $L_0^+$, finding all its eigenpairs, choosing the largest eigenvalue for the time step $\tau$, and using the $K$ eigenpairs corresponding to the smallest eigenvalues for the remaining steps.

For the explicit Euler method variant of \ref{alg:signlessMBO} the computation time includes removing all isolated nodes, computing $L^+ \in \{L_0^+,L_1^+,L_s^+\}$, generating initial conditions, running the signless diffusion and thresholding steps, and computing the size of the cut induced by each MBO iteration. For $L = L_s$ we also compute $L_1$ to obtain the size of the output cut using \eqref{eq:cutLaplacian}.

For every graph there exists a $\tau_{max}$ such that for all $\tau \geq \tau_{max}$ the solution to \eqref{eq:signlessdiffusion} computed using \ref{alg:signlessMBO} converges to $u(\tau) = 0$ to machine precision. In practice $\tau_{max}$ is dependent on the operator $\Delta^+$. In our experiments we see that choosing a $\tau$ which is in between the pinning condition in Theorem~\ref{thm:pin} and $\tau_{max}$ is difficult due to the difference between them being small when $\Delta_0^+$ is our operator for \ref{alg:signlessMBO}. In Section~\ref{sec:scale} and Section~\ref{sec:weighted} we run our experiments on graphs with a scale free structure (see Section~\ref{sec:scale}). When running \ref{alg:signlessMBO} using the explicit Euler method and $\Delta_0^+$ we encounter problems in choosing suitable $\tau$ and $dt$ for such graphs. This is due to the inflexibility of choosing $\tau$ such that it is less than $\tau_{max}$ and also greater than the bound in Theorem~\ref{thm:pin}. Since the Euler method is an approximation of the spectral method, we encounter problems in this case. If \ref{alg:signlessMBO} returns a cut which has pinned due to Theorem~\ref{thm:pin} or is zero due to the solution of \eqref{eq:signlessdiffusion} converging to zero to machine precision then we refer to the cut as a trivial cut. In Section~\ref{sec:implicit} we show that it is possible to obtain non-trivial cut sizes using \ref{alg:signlessMBO} with $\Delta_0^+$ by solving \eqref{eq:signlessdiffusion} using an implicit Euler scheme.        

Figure~\ref{fig:visualisation} shows two examples of approximate maximum cuts obtained with the \ref{alg:signlessMBO} algorithm. The black nodes are in $V_1$ and the white nodes are in $V_{-1}$. An edge is coloured red, if it connects two nodes of different colour, i.e. if it contributes to the size of the cut. If it does not, it is black.

Figure~\ref{fig:webgraph} shows an unweighted web graph which has 201 nodes and 400 edges. We set $\tau = 20$ in \ref{alg:signlessMBO} using $\Delta_1^+$ and the Euler method to solve \eqref{eq:signlessdiffusion}. The resulting approximation of the maximum cut value is 350. The run time is 0.09 seconds. Figure~\ref{fig:squaretriangle} shows an unweighted triangle-square graph which has 162 nodes and 355 edges. We set $\tau = 20$ and $K=20$ in \ref{alg:signlessMBO} using $\Delta_1^+$ and the spectral method to solve \eqref{eq:signlessdiffusion}. The approximation of the maximum cut value is 295 and the run time is 0.14 seconds.


\subsection{Random graphs}\label{sec:Rand}

In Figures~\ref{fig:G1000}, \ref{fig:G2500}, and~\ref{fig:G5000} we list results obtained for Erd\"os-R\'enyi graphs. 

For each of $G(1000,0.01)$ (Figure~\ref{fig:G1000}), $G(2500,0.4)$ (Figure~\ref{fig:G2500}), and $G(5000,0.001)$ (Figure~\ref{fig:G5000}) we create 100 realisations. We then run \ref{alg:signlessMBO} with both the spectral method and the Euler method, and we run \ref{alg:GW}. For both of the \ref{alg:signlessMBO} methods we choose either $\Delta_0^+$, $\Delta_1^+$, or $\Delta_s^+$, setting $\tau = 20$ for all tests. The bar chart represents the mean of the best, average, and least cuts over all 100 realisations of the chosen random graph. The error bars are the corrected sample standard deviation\footnote{The corrected sample standard deviation is computed using MATLAB's \texttt{std} code in all experiments in this paper.} of the results obtained over all 100 realisations. Figure~\ref{fig:G1000} shows that \ref{alg:signlessMBO} using either the spectral method or Euler method for $\Delta_1^+$ and $\Delta_s^+$ produces better mean best, mean average, and mean least cuts than \ref{alg:GW} on this set of graphs. Figure~\ref{fig:G2500} shows that \ref{alg:signlessMBO} using the spectral method and either $\Delta_1^+$ or $\Delta_s^+$ produces better mean cut approximations than \ref{alg:GW} on this set of graphs. Figure~\ref{fig:G5000} shows the same conclusions as Figure~\ref{fig:G1000} for this set of graphs. Table~\ref{tab:ERTime} shows that \ref{alg:signlessMBO} using the spectral method produces the fastest run times on all three types of Erd\"os-R\'enyi graphs that we test on. We note that \ref{alg:GW} has a superior run time over \ref{alg:signlessMBO} using the Euler method on the realisations of $G(2500,0.4)$.

\begin{figure}
\centering
  \includegraphics[width=12cm]{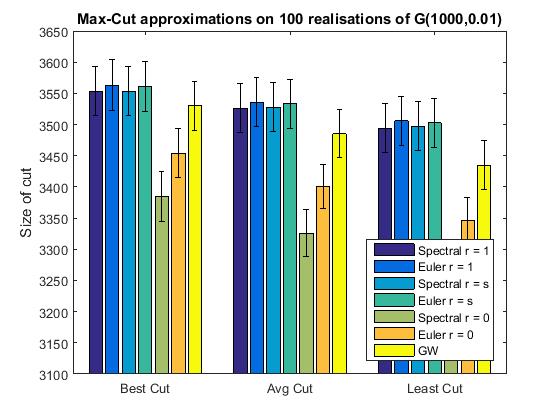}
\caption{Bar chart of Max-Cut approximations on 100 realisations of $G(1000,0.01)$.}\label{fig:G1000}
\end{figure}

\begin{figure}
\centering
  \includegraphics[width=12cm]{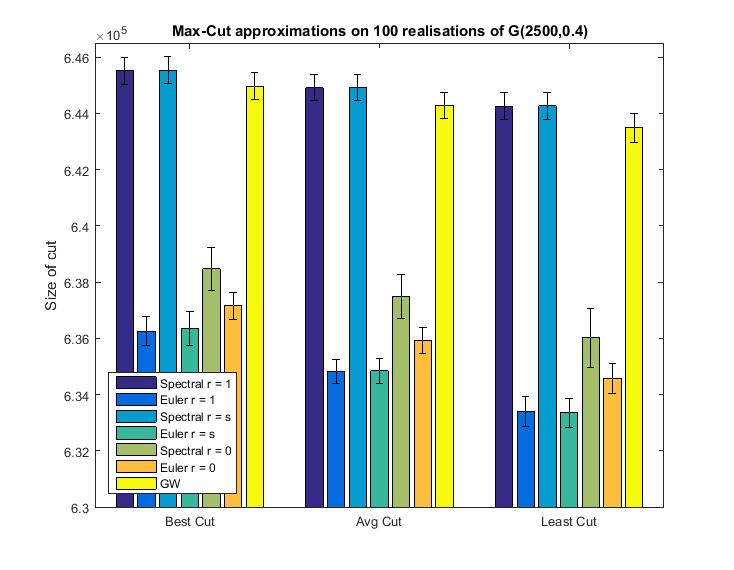}
\caption{Bar chart of Max-Cut approximations on 100 realisations of $G(2500,0.4)$.}\label{fig:G2500}
\end{figure}

\begin{figure}
\centering
  \includegraphics[width=12cm]{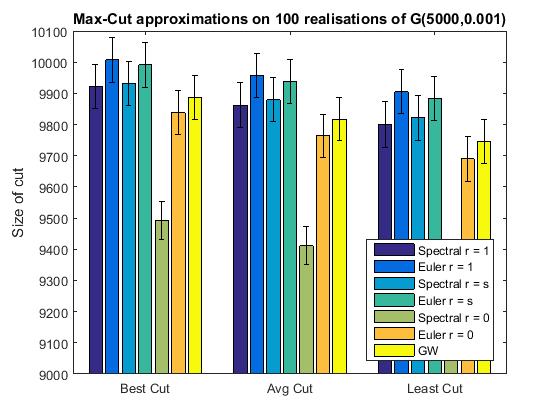}
\caption{Bar chart of Max-Cut approximations on 100 realisations of $G(5000,0.001)$.}\label{fig:G5000}
\end{figure}

\begin{table}
\begin{tabular}{ |l|l|l|l|l|l|l|l|}
\hline
Graph & $\Delta_1^+$ (S) & $\Delta_1^+$ (E) & $\Delta_s^+$ (S) & $\Delta_s^+$ (E) & $\Delta_0^+$ (S) & $\Delta_0^+$ (E) & GW \\ \hline
$G(1000,0.01)$ & \textbf{0.20} & 1.58 & 0.34 & 1.52 & 0.56 & 1.06 & 5.25 \\ \hline
$G(2500,0.4)$ & 8.04 & 172.91 & 13.33 & 181.40 & \textbf{6.40} & 172.73 & 55.36 \\ \hline
$G(5000,0.001)$ & \textbf{4.38} & 16.96 & 6.37 & 14.95 & 24.99 & 6.97 & 257.09\\ \hline
\end{tabular}
\caption{Average \ref{alg:signlessMBO} and \ref{alg:GW} run-times for each realisation of $G(n,p)$, time in seconds.}\label{tab:ERTime}
\end{table}


\subsection{Scale-free graphs}\label{sec:scale}

The degree distribution $P: \mathbb{N} \rightarrow \mathbb{R}$ of an unweighted graph $G$ is given by $P(j) := \frac{|\{i \in V: \: d_i = j\}|}{n}$. Random graphs such as the ones discussed in Section~\ref{sec:Rand} have a degree distribution which resembles a normal distribution. The graph $G \in \mathcal{G}$ is a scale-free graph if its degree distribution roughly follows a power law, i.e $P(j) \approx j^{-\gamma}$, where often in practice, $\gamma \in (2,3)$ \cite{barabasi2009}. Scale-free graphs have become of interest as graphs such as internet networks, collaboration networks, and social networks are conjectured to more closely resemble scale-free graphs instead of random graphs \cite{barabasi2003}.

\begin{figure}
\centering
\begin{subfigure}{.5\textwidth}
  \centering
  \includegraphics[width=7cm]{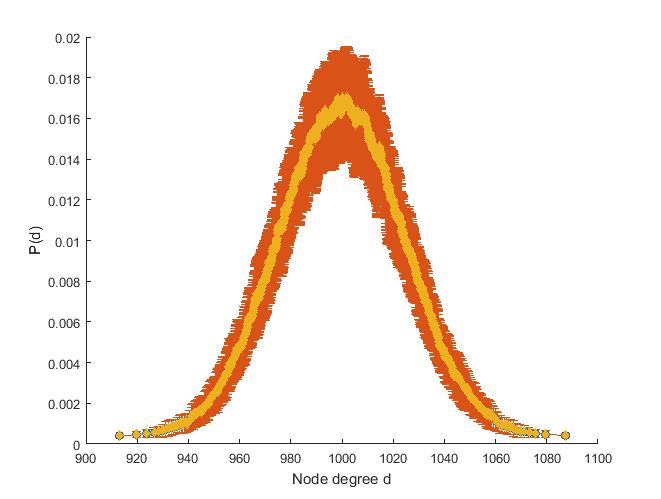}
  \caption{Degree distribution of a realisation of $G(2500,0.4)$.}\label{fig:g2500deg}
\end{subfigure}%
\begin{subfigure}{.5\textwidth}
  \centering
  \includegraphics[width=7cm]{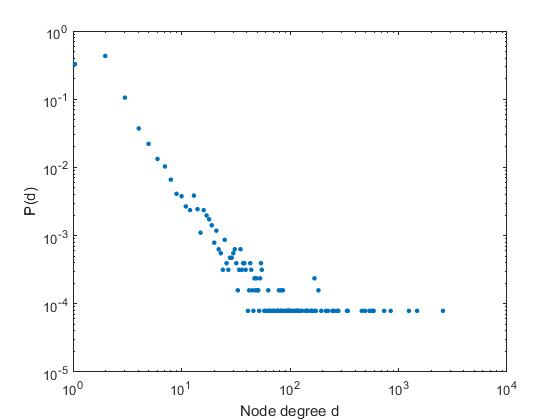}
  \caption{Degree distribution of the AS1 Graph.}\label{fig:AS1deg}
\end{subfigure}
\caption{Average degree distribution of 100 realisations of a random graph and the degree distribution of a scale free graph.}\label{fig:DEG}
\end{figure}

In Table~\ref{table:large} we list results for some scale free graphs. We test the algorithms on 8 autonomous systems internet graphs, $\mathrm{AS}i$, $i\in \{1, \ldots, 8\}$. These graphs represent smaller imitations of an internet network, which were acquired from the website \cite{ASI}. We also test on the graph Gnutella09 which is a model of a peer to peer file sharing network, and the graph WikiVote, which is a network representing a Wikipedia administrator election, both obtained from \cite{snapnets}. All of the scale free graphs in this section are unweighted and undirected graphs.

Table~\ref{table:ASProp} displays some properties of the random graphs in Section~\ref{sec:Rand} and the scale-free graphs we test on. Figure~\ref{fig:DEG} displays the average degree distribution of 100 realisations of $G(2500,0.4)$, in Figure~\ref{fig:g2500deg}, and the degree distribution of the AS1 Graph, in Figure~\ref{fig:AS1deg}. In Figure~\ref{fig:g2500deg} the yellow points indicate the degree distribution, and the orange lines indicate the corrected sample standard deviation of the average degree distribution. In Figure~\ref{fig:AS1deg} the blue dots indicate the degree distribution. As we see, the average degree distribution of the realisations of $G(2500,0.4)$ is similar to a normal distribution, and the degree distribution of the AS1 graph resembles a power law, as expected.

\begin{table}
\centering
\begin{tabular}{ |l|l|l|l|l|}
\hline
Graph & $|V|$ & $|E|$ & $d_{-}$ & $d_{+}$ \\ \hline
$G(1000,0.01)$(1) & 1000 & 4919 & 1 & 21 \\ \hline
$G(1000,0.01)$(2) & 1000 & 4939  & 2 & 21 \\ \hline
$G(2500,0.4)$(1) & 2500 & 1248937 & 910 & 1079  \\ \hline
$G(2500,0.4)$(2) & 2500 & 1251182 & 904 & 1081\\ \hline
$G(5000,0.001)$(1) & 4962 & 12646 & 1 & 16 \\ \hline
$G(5000,0.001)$(2) & 4969 & 12642 & 1 & 16 \\ \hline
\end{tabular}
\
\begin{tabular}{ |l|l|l|l|l|}
\hline
Graph & $|V|$ & $|E|$ & $d_{-}$ & $d_{+}$ \\ \hline
AS1 & 12694 & 26559 & 1 & 2566  \\ \hline
AS2 & 7690 & 15413 & 1 & 1713  \\ \hline
AS3 & 8689 & 17709 & 1 & 1911  \\ \hline
AS4 & 8904 & 17653 & 1 & 1921  \\ \hline
GNutella09 & 8114 & 26013 & 1 & 102 \\ \hline
Wiki-Vote & 7115 & 100762 & 1 & 1065 \\ \hline
\end{tabular}
\caption{Properties of $G(n,p)$ graph realisations vs scale free graphs.}\label{table:ASProp}
\end{table}

For all graphs listed in Table~\ref{table:large}, using either $\Delta_1^+$ or $\Delta_s^+$ \ref{alg:signlessMBO} using the Euler method or the spectral method outperforms \ref{alg:GW} with respect to the average and least obtained cut sizes and the run time, but \ref{alg:GW} obtains the best results when considering the greatest obtained cuts. For any choice of $\Delta_1^+$ and $\Delta_s^+$ and for any choice of signless diffusion solver the greatest cuts obtained by \ref{alg:signlessMBO} are all at least 98.1\% of the greatest cut size obtained by \ref{alg:GW}. The difference in run times is notable though. The time taken by \ref{alg:signlessMBO} stays below 30 seconds for all graphs in Table~\ref{table:large}, irrespective of choice of Laplacian and signless diffusion solver. However, the \ref{alg:GW} algorithm's run times range between 9 and 44 minutes. These results suggest that \ref{alg:signlessMBO} using $\Delta_1^+$ or $\Delta_s^+$, and using either signless diffusion solver offers a significant decrease in run time at the cost of about 1-2\% accuracy in the resulting cut size, in comparison with \ref{alg:GW}, when applied to the graphs in Table~\ref{table:large}.

\begin{table}
\centering

\begin{tabular}{ |l|l|l|l|l|}
\hline
Graph & $\Delta_1^+$ (S) Best & $\Delta_1^+$ (S) Avg & $\Delta_1^+$ (S) Least & $\Delta_1^+$ (S) Time \\ \hline
AS1        & 22744 & 22542.20  & 22183 & \textbf{15.85} \\ \hline
AS2        & 13249 & 13153.72 & 13054 & \textbf{3.55}  \\ \hline
AS3        & 15118 & 15027.22 & 14907 & 4.73  \\ \hline
AS4        & 15194 & 15143.44 & 15042 & \textbf{5.67}  \\ \hline
AS5        & 14080 & 13988.90  & 13928 & \textbf{4.82}  \\ \hline
AS6        & 18053 & 17964.74 & 17876 & 10.06 \\ \hline
AS7        & 22741 & 22535.00    & 22150 & 17.82 \\ \hline
AS8        & 22990 & 22720.36 & 22334 & 17.22 \\ \hline
GNutella09 & 20280 & 20143.74 & 19983 & 8.16  \\ \hline
WikiVote   & 72981 & 72856.40  & 72744 & 2.46  \\ \hline
\end{tabular}
\vspace{0.5cm}

\begin{tabular}{ |l|l|l|l|l|}
\hline
Graph & $\Delta_1^+$ (E) Best & $\Delta_1^+$ (E) Avg & $\Delta_1^+$ (E) Least & $\Delta_1^+$ (E) Time \\ \hline
AS1        & 22798 & \textbf{22670.76} & 22268 & 23.62 \\ \hline
AS2        & 13281 & \textbf{13199.72} & \textbf{13120} & 8.76  \\ \hline
AS3        & 15175 & \textbf{15095.46} & \textbf{15007} & 9.95  \\ \hline
AS4        & 15270 & \textbf{15202.70}  & \textbf{15117} & 10.88 \\ \hline
AS5        & 14120 & \textbf{14020.62} & \textbf{13944} & 9.50   \\ \hline
AS6        & 18134 & \textbf{18034.10}  & \textbf{17933} & 16.50  \\ \hline
AS7        & 22826 & \textbf{22696.42} & \textbf{22525} & 25.78 \\ \hline
AS8        & 23070 & \textbf{22951.54} & \textbf{22550} & 25.38 \\ \hline
GNutella09 & 20437 & \textbf{20361.92} & \textbf{20295} & 17.14 \\ \hline
WikiVote   & 73159 & \textbf{73126.34} & \textbf{73086} & 9.06  \\ \hline
\end{tabular}
\caption{\ref{alg:signlessMBO} cut approximations using $\Delta_1^+$ on graphs with a scale free structure, time in seconds.}
\end{table}

\vspace{0.5cm}

\begin{table}
\centering
\begin{tabular}{ |l|l|l|l|l|}
\hline
Graph & $\Delta_s^+$ (S) Best & $\Delta_s^+$ (S) Avg & $\Delta_s^+$ (S) Least & $\Delta_s^+$ (S) Time \\ \hline
AS1        & 22809 & 22620.8  & \textbf{22325} & 17.83 \\ \hline
AS2        & 13271 & 13178.86 & 13103 & 4.12  \\ \hline
AS3        & 15166 & 15082.1  & 14992 & \textbf{4.66}  \\ \hline
AS4        & 15237 & 15166.24 & 15077 & 5.78  \\ \hline
AS5        & 14075 & 14011.96 & 13911 & 5.47  \\ \hline
AS6        & 18088 & 17968.04 & 17859 & \textbf{9.14}  \\ \hline
AS7        & 22822 & 22629.66 & 22218 & \textbf{15.73} \\ \hline
AS8        & 23061 & 22884.8  & 22547 & \textbf{15.46} \\ \hline
GNutella09 & 20282 & 20186.32 & 20101 & \textbf{6.82}  \\ \hline
WikiVote   & 73169 & 73003.44 & 72917 & \textbf{2.25} \\ \hline
\end{tabular}

\vspace{0.5cm}

\begin{tabular}{ |l|l|l|l|l|}
\hline
Graph & $\Delta_s^+$ (E) Best & $\Delta_s^+$ (E) Avg & $\Delta_s^+$ (E) Least & $\Delta_s^+$ (E) Time \\ \hline
AS1        & 22789 & 22629.62 & 22261 & 27.63 \\ \hline
AS2        & 13256 & 13176.64 & 13094 & 9.09  \\ \hline
AS3        & 15139 & 15059.54 & 14967 & 10.24 \\ \hline
AS4        & 15234 & 15159.76 & 15079 & 11.57 \\ \hline
AS5        & 14096 & 14011.9  & 13930 & 10.47 \\ \hline
AS6        & 18088 & 17994.66 & 17876 & 16.12 \\ \hline
AS7        & 22823 & 22639.58 & 22237 & 24.5  \\ \hline
AS8        & 23036 & 22865    & 22440 & 25.08 \\ \hline
GNutella09 & 20397 & 20332.28 & 20170 & 18.75 \\ \hline
WikiVote   & 72993 & 72772.26 & 72549 & 9.00 \\  \hline 
\end{tabular}
\caption{\ref{alg:signlessMBO} cut approximations using $\Delta_s^+$ on graphs with a scale free structure, time in seconds.}
\end{table}

\vspace{0.5cm}

\begin{table}
\centering
\begin{tabular}{ |l|l|l|l|l|}
\hline
Graph & $\Delta_0^+$ (S) Best & $\Delta_0^+$ (S) Avg & $\Delta_0^+$ (S) Least & $\Delta_0^+$ (S) Time \\ \hline
AS1  &  22578  &  22303.10  & 21844  & 297.79  \\ \hline
AS2  &  13081 & 12935.80  & 12763  & 62.41  \\ \hline
AS3  & 14995 & 14869.52 & 14702 & 90.32   \\ \hline
AS4  & 15097  & 14994.92  &  14885 & 88.53   \\ \hline
AS5  &  13952  &  13795.24   &  13561    &  70.81 \\ \hline
AS6  &  17836  & 17672.50  & 17527  & 149.60 \\ \hline
AS7  & 22571  & 22328.18 & 21932  & 294.26  \\ \hline
AS8  & 22824  & 22585.88  & 22075  & 287.79 \\ \hline
GNutella09  & 19079  & 18419.36  & 17951   & 72.03 \\ \hline
WikiVote & 65504  & 60599.74   & 56917  & 46.11 \\ \hline
\end{tabular}
\caption{\ref{alg:signlessMBO} cut approximations using $\Delta_0^+$ on graphs with a scale free structure, time in seconds.}
\end{table}

\vspace{0.5cm}

\begin{table}
\centering
\begin{tabular}{ |l|l|l|l|l|}
\hline
Graph & GW Best & GW Avg & GW Least & GW Time\\ \hline
AS1 & \textbf{22864} & 22346.26 & 20546 & 2324.98\\ \hline
AS2 & \textbf{13328} & 13039.10 & 12048 & 594.29\\ \hline
AS3 & \textbf{15240} & 14961.56 & 14050 & 826.65\\ \hline
AS4 & \textbf{15328} & 15015.34 & 14072 & 832.28\\ \hline
AS5 & \textbf{14190} & 13810.82 & 12922 & 721.51 \\ \hline
AS6 & \textbf{18191} & 17851.24 & 16483 & 1368.35 \\ \hline
AS7 & \textbf{22901} & 22421.80 & 21244 & 2321.34 \\ \hline
AS8 & \textbf{23170} & 22593.10 & 21110 & 2613.62 \\ \hline
GNutella09 & \textbf{20658} & 20242.02 & 18815 & 1095.04\\ \hline
Wiki-Vote & \textbf{73363} & 71510 & 62886 & 1074.98\\ \hline
\end{tabular}
\caption{\ref{alg:GW} cut approximations on graphs with a scale free structure, time in seconds.}\label{table:large}
\end{table}


\subsection{Random modular graphs}\label{sec:mod}

Modular graphs have a community structure. Nodes in a community have many connections with other members of the same community and noticeably fewer connections with members of other communities. In Figure~\ref{fig:4mod} we show what our Max-Cut approximation looks like on a random modular graph. We generate realisations of random unweighted modular graphs $R(n,c,p,r)$ using the code provided at \cite{Mod}. The variables for the graph are the number of nodes $n$, the number $c \in \mathbb{N}$ of communities that the graph contains, a probability $p$ such that the graph will have an expected number of $\frac{n^2}{2p}$ edges, and a ratio $r \in [0,1]$, with $r|E|$ being the expected number of edges connecting nodes in the same community and $(1-r)|E|$ being the expected number of edges connecting nodes in different communities.

\begin{figure}
\centering
  \includegraphics[width=10cm]{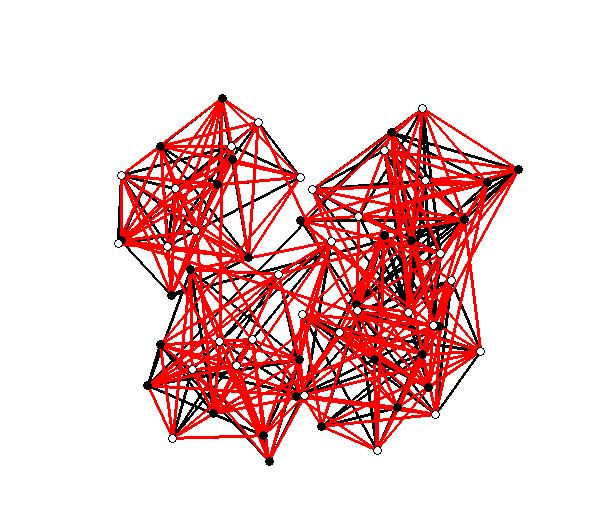}
  \caption{A Max-Cut approximation on a random 4-modular graph (best viewed in colour).}\label{fig:4mod}
\end{figure}

In Figures~\ref{fig:mod2500}, \ref{fig:mod4000}, and~\ref{fig:mod10000} we display results obtained for random modular graphs. For each of $R(2500,2,0.009,0.8)$ (Figure~\ref{fig:mod2500}), $R(4000,20,0.01,0.7)$ (Figure~\ref{fig:mod4000}), and $R(10000,10,0.01,0.8)$ (Figure~\ref{fig:mod10000}) we create 100 realisations. We then run \ref{alg:signlessMBO} with both the spectral method and the Euler method, and we run \ref{alg:GW}. For both of the \ref{alg:signlessMBO} methods we choose either $\Delta_0^+$, $\Delta_1^+$, or $\Delta_s^+$, setting $\tau = 20$ for all tests. The bar chart represents the mean of the best, average, and least cuts over all 100 realisations of the chosen random modular graph. The error bars are the corrected sample standard deviation of the results obtained over all 100 realisations.

In Figures~\ref{fig:mod2500}, \ref{fig:mod4000}, and \ref{fig:mod10000} we see that using either $\Delta_1^+$ or $\Delta_s^+$ \ref{alg:signlessMBO} with both the spectral method and the Euler method outperforms \ref{alg:GW} with respect to the best, average, and least cuts. In Table~\ref{table:ModTime} we see that for any choice of operator and method, \ref{alg:signlessMBO} is faster on average than \ref{alg:GW} for our choices for random modular graphs. We note in particular that for our realisations of $R(10000,10,0.01,0.8)$ the average \ref{alg:GW} test took just below 65 minutes, where as the average \ref{alg:signlessMBO} test using the spectral method and either $\Delta_1^+$ or $\Delta_s^+$ took under a minute, obtaining on average better outcomes.

\begin{figure}
\centering
  \includegraphics[width=12cm]{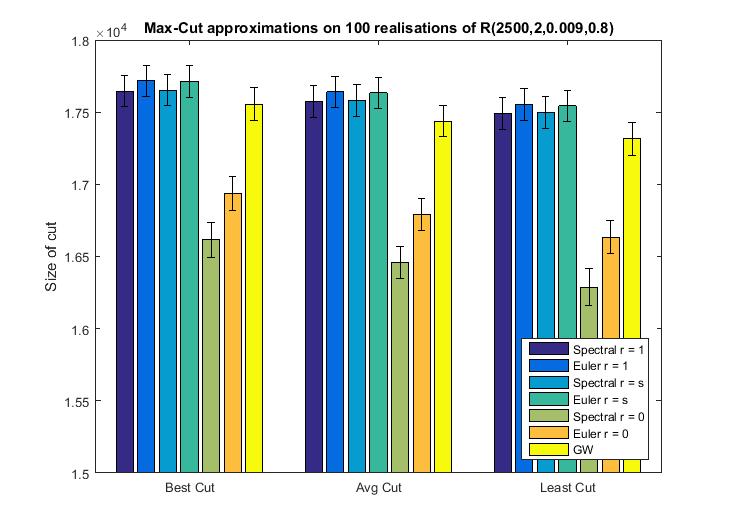}
  \caption{Bar chart of Max-Cut approximations on 100 realisations of $R(2500,2,0.009,0.8)$.}\label{fig:mod2500}
\end{figure}

\begin{figure}
\centering
  \includegraphics[width=12cm]{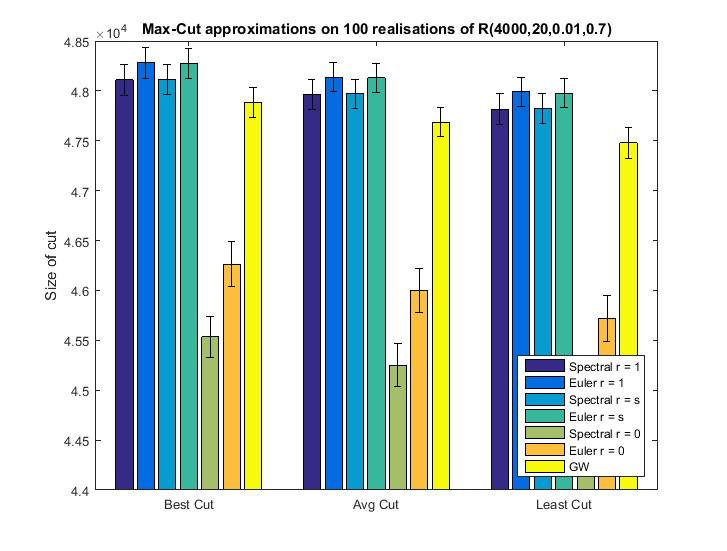}
  \caption{Bar chart of Max-Cut approximations on 100 realisations of $R(4000,20,0.01,0.7)$.}\label{fig:mod4000}
\end{figure}

\begin{figure}
\centering
  \includegraphics[width=12cm]{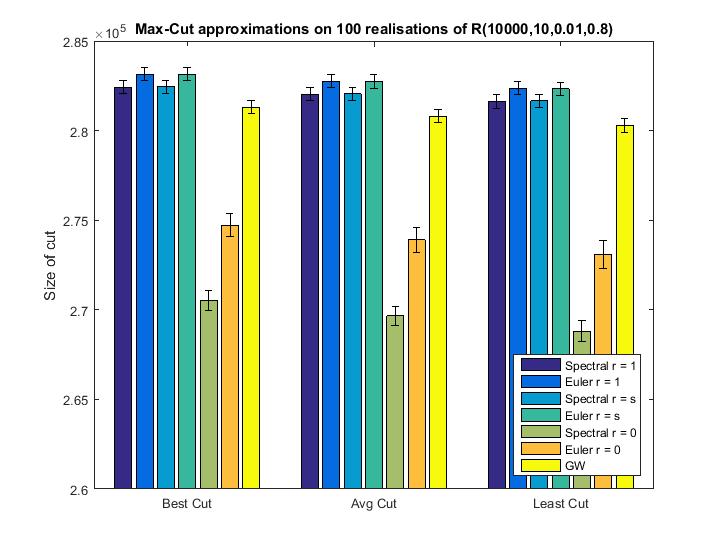}
  \caption{Bar chart of Max-Cut approximations on 100 realisations of $R(10000,10,0.01,0.8)$.}\label{fig:mod10000}
\end{figure}

\begin{table}
\begin{tabular}{ |l|l|l|l|l|l|l|l|}
\hline
Graph & $\Delta_1^+$ (S) & $\Delta_1^+$ (E) & $\Delta_s^+$ (S) & $\Delta_s^+$ (E) & $\Delta_0^+$ (S) & $\Delta_0^+$ (E) & GW \\ \hline
$R(2500,2,0.009,0.8)$ & 0.80 & 10.43 & \textbf{0.79} & 10.26 & 4.36  & 6.13 & 56.30  \\ \hline
$R(4000,20,0.01,0.7)$ & \textbf{4.05} & 30.46 & 4.49 & 29.52 & 16.26 & 18.19  & 248.25  \\ \hline
$R(10000,10,0.01,0.8)$ & \textbf{49.98} & 266.10 & 52.85 & 266.40 & 210.94 & 194.52 & 3893.87 \\ \hline
\end{tabular}
\caption{Average \ref{alg:signlessMBO} and \ref{alg:GW} run-times for each realisation of $R(n,c,p,r)$, time in seconds.}\label{table:ModTime}
\end{table}


\subsection{Weighted graphs}\label{sec:weighted}

In this subsection we assign random weights to the edges of selected graphs from Section~\ref{sec:Rand} and Section~\ref{sec:scale}. To create the graphs W1 and W2 we use two of the realisations of $G(1000,0.01)$, and multiply its edges by random real numbers drawn uniformly from in the range $[0,2]$ and $[0,20]$ respectively. W3 and W4 were created by using two of the realisations of $G(2500,0.4)$ in Section~\ref{sec:Rand}, and multiplying its edges by random real numbers drawn uniformly from in the ranges $[0,5]$ and $[0,1]$ respectively. W5, W6, W7 were created by using three of the realisations of  $G(5000,0.001)$ in Section~\ref{sec:Rand}, and multiplying its edges by random real numbers drawn uniformly from in the ranges  $[0,1],[0,15],$ and $[0,50]$ respectively. W8 is the AS1 graph, whose edges are multiplied by random real numbers drawn uniformly from in the range $[0,12]$, W9 is the AS5 graph whose edges are multiplied by random real numbers drawn uniformly from in the range $[0,4]$ and W10 is the AS8 graph whose edges are multiplied by random real numbers drawn uniformly from in the range $[0,8]$. We run \ref{alg:signlessMBO} for all three choices of $\Delta^+$, on all of these graphs, and compare against \ref{alg:GW} in Table~\ref{table:weighted}. We set $\tau = 20$ for both the spectral decomposition method and the Euler method.

We saw that \ref{alg:signlessMBO} using the spectral method produced larger cuts than \ref{alg:GW} on the random graphs considered in Section~\ref{sec:Rand}; when assigning random weights to the edges of these random graphs the same conclusion holds. We see in Table~\ref{table:weighted} that for this collection of random graphs \ref{alg:signlessMBO} using the spectral method (with either $\Delta_1^+$ or $\Delta_s^+$ used) outperforms \ref{alg:GW} with respect to the best, average, and smallest obtained cut sizes, and the run time. In Section~\ref{sec:scale} we saw that \ref{alg:signlessMBO} using both the spectral method and the Euler method produced better average and smallest cuts than \ref{alg:GW} on the scale free graphs considered in that section, but the best cut sizes were produced more often by \ref{alg:GW}. These weighted examples support the same conclusions. The blank results in Table~\ref{table:weighted} for the Euler method using $\Delta_0^+$ are due to \ref{alg:signlessMBO} producing trivial results for these choices as stated in Section~\ref{sec:method}.

\begin{table}
\centering
\begin{tabular}{ |l|l|l|l|l|}
\hline
Graph & $\Delta_1^+$ (S) Best & $\Delta_1^+$ (S) Avg & $\Delta_1^+$ (S) Least & $\Delta_1^+$ (S) Time \\ \hline
W1 & 3612.00 & 3569.08 & 3537.10 & 0.47 \\ \hline
W2 & 36487.51 & 36082.58 & 35687.87 & \textbf{0.30} \\ \hline
W3 & 1622125.53 & \textbf{1620885.77} & \textbf{1619371.25} & 8.09\\ \hline
W4 & 323926.34 & 323639.05 & \textbf{323321.92} & 8.59 \\ \hline
W5 & 5054.26 & 5033.54 & 5010.38 & \textbf{4.00}\\ \hline
W6 & 74560.24 & 74218.26 & 73776.17 & \textbf{3.90} \\ \hline
W7 & 252448.52 & 251045.03 & 249459.89 & 4.18 \\ \hline
W8 & 137202.14 & 135952.94 & 133480.08 & 16.17 \\ \hline
W9 & 28351.01 & 28194.96 &28009.15 & \textbf{3.99} \\ \hline
W10 & 92376.49 & 91570.35 & 90172.90 & 17.02 \\ \hline
\end{tabular}
\vspace{0.5cm}

\begin{tabular}{ |l|l|l|l|l|}
\hline
Graph & $\Delta_1^+$ (E) Best & $\Delta_1^+$ (E) Avg & $\Delta_1^+$ (E) Least & $\Delta_1^+$ (E) Time \\ \hline
W1 & \textbf{3622.58}  & \textbf{3580.53} & 3548.82 & 1.41 \\ \hline
W2 & \textbf{36530.25} & \textbf{36191.16} & \textbf{35928.56} &  1.67 \\ \hline
W3 &1603390.76 & 1600505.43& 1596558.94 &185.03\\ \hline
W4 & 320347.01 & 319612.93&318849.26 &195.66\\ \hline
W5 & \textbf{5104.45} & \textbf{5081.95} & \textbf{5063.64} &15.31\\ \hline
W6 & \textbf{75499.50} & \textbf{75175.73} & \textbf{74833.80} &15.70\\ \hline
W7 & \textbf{255793.23} & \textbf{254569.97} & \textbf{253091.91} &15.71\\ \hline
W8 & 137569.32 & \textbf{136896.1} & \textbf{136094.60} &23.83\\ \hline
W9 & 28545.45 & \textbf{28369.43} & \textbf{28141.76} &9.24\\ \hline
W10 & 93021.06 & \textbf{92489.04} & \textbf{91626.99} &25.37\\ \hline
\end{tabular}
\caption{\ref{alg:signlessMBO} cut approximations using $\Delta_1^+$ on randomly weighted graphs, time in seconds.}
\end{table}

\vspace{0.5cm}

\begin{table}
\centering
\begin{tabular}{ |l|l|l|l|l|}
\hline
Graph & $\Delta_s^+$ (S) Best & $\Delta_s^+$ (S) Avg & $\Delta_s^+$ (S) Least & $\Delta_s^+$ (S) Time \\ \hline
W1  &  3601.29  & 3569.23 & 3545.85   & \textbf{0.33} \\ \hline
W2  & 36192.09  & 36059.80  & 35867.83  & 0.49  \\ \hline
W3  & \textbf{1622372.91} & 1620484   & 1618809.76 & 8.40   \\ \hline
W4  & \textbf{323933.40}   & \textbf{323642.4}  & 323114.45  & 7.65  \\ \hline
W5  & 5068.19    & 5041.94   & 5015.16    & 4.50   \\ \hline
W6  & 74844.37   & 74505.45  & 73963.79   & 4.67  \\ \hline
W7  & 253043.96  & 251668.30  & 250600.35  & \textbf{4.12}  \\ \hline
W8  & 137195.52  & 136360.17 & 134856.06  & \textbf{15.38} \\ \hline
W9  & 28389.38   & 28227.09  & 28067.66   & 4.12  \\ \hline
W10 & 92439.42   & 91952.98  & 90488.33   & \textbf{15.33} \\ \hline
\end{tabular}

\vspace{0.5cm}

\begin{tabular}{ |l|l|l|l|l|}
\hline
Graph & $\Delta_s^+$ (E) Best & $\Delta_s^+$ (E) Avg & $\Delta_s^+$ (E) Least & $\Delta_s^+$ (E) Time \\ \hline
W1  &  3614.37   &  3577.56  &  3542.19 & 1.40 \\ \hline
W2  & 36321.80  & 36150.05  & 35910.90 & 1.53  \\ \hline
W3  & 1604257.12 & 1600145.68 & 1597577.4 & 187.88 \\ \hline
W4  & 320691.88  & 319596.27  & 318900.13 & 199.01 \\ \hline
W5  & 5096.55    & 5072.36    & 5041.89   & 15.9   \\ \hline
W6  & 75456.87   & 75089.73   & 74745.17  & 18.09  \\ \hline
W7  & 255316.85  & 253821.64  & 252527.13 & 15.48  \\ \hline
W8  & 137282.02  & 136475.24  & 134333.1  & 24.51  \\ \hline
W9  & 28445.94   & 28258.64   & 28101.22  & 9.18   \\ \hline
W10 & 92731.62   & 92093.05   & 90448.61  & 24.36  \\ \hline
\end{tabular}
\caption{\ref{alg:signlessMBO} cut approximations using $\Delta_s^+$ on randomly weighted graphs, time in seconds.}
\end{table}

\vspace{0.5cm}

\begin{table}
\centering
\begin{tabular}{ |l|l|l|l|l|}
\hline
Graph & $\Delta_0^+$ (S) Best & $\Delta_0^+$ (S) Avg & $\Delta_0^+$ (S) Least & $\Delta_0^+$ (S) Time \\ \hline
W1  &  3413.96  &  3345.32  & 3276.63  &  0.61 \\ \hline
W2  & 34784.30  & 34304.33  & 33627.16  & 0.51  \\ \hline
W3  & 1602346.52 & 1600022.33 & 1595791.12 & \textbf{6.97}   \\ \hline
W4  & 320251.52  & 319940.38  & 319663.40  & \textbf{6.25}   \\ \hline
W5  & 4793.44   & 4761.72    & 4715.51     & 18.66  \\ \hline
W6  & 71219.49   & 70427.83   & 69643.31  & 18.93 \\ \hline
W7  & 239991.72  & 237647.45  & 235617.15  & 19.17  \\ \hline
W8  & 134097.55  & 131088.97  & 126123.70  & 272.56 \\ \hline
W9  & 27528.99   & 26554.77   & 25501.34   & 69.63  \\ \hline
W10 & 90271.70   & 88031.84   & 83130.60   & 264.89 \\ \hline
\end{tabular}

\vspace{0.5cm}

\begin{tabular}{ |l|l|l|l|l|}
\hline
Graph & $\Delta_0^+$ (E) Best & $\Delta_0^+$ (E) Avg & $\Delta_0^+$ (E) Least & $\Delta_0^+$ (E) Time \\ \hline
W1  &  3524.24  &  3456.55  & 3406.93  &  1.03  \\ \hline
W2  & 35664.18  & 35040.71 & 34383.57 &  1.03 \\ \hline
W3  & 1605419.97 & 1602251.82 & 1597064.59 & 203.27  \\ \hline
W4  & 320321.63  & 319809.73  & 319237.08  & 192.51  \\ \hline
W5  & 5017.66    & 4983.90    & 4954.63   & 7.76 \\ \hline
W6  & 74195.87   & 73688.97   & 73231.67  & 7.33  \\ \hline
W7  & 251330.73  & 249754.88  & 248091.06 & 7.51  \\ \hline
W8  &     -     &     -      &    -      & -  \\ \hline
W9  &     -      &    -      &    -      &  - \\ \hline
W10 &     -     &     -      &    -     &  - \\ \hline
\end{tabular}
\caption{\ref{alg:signlessMBO} cut approximations using $\Delta_0^+$ on randomly weighted graphs, time in seconds.}
\end{table}

\vspace{0.5cm}

\begin{table}
\centering
\begin{tabular}{ |l|l|l|l|l|}
\hline
Graph & GW Best & GW Avg & GW Least & GW Time\\ \hline
W1 & 3585.17 & 3535.63 & 3494.26 & 5.74 \\ \hline
W2 & 36101.30 & 35698.47 & 35151.60 & 6.07 \\ \hline
W3 & 1620705.80 & 1618813.52 & 1616502.33 & 43.58 \\ \hline
W4 & 323573.40 & 323275.84 & 322795.83 & 44.09 \\ \hline
W5 & 5038.00 & 5000.74 & 4953.71 & 265.27 \\ \hline
W6 & 74372.75 & 73852.36 & 73293.27 & 241.33 \\ \hline
W7 & 251802.56 & 250316.08 & 248098.85 & 263.44 \\ \hline
W8 & \textbf{138159.14} & 135899.20 & 129576.95 & 2629.60 \\ \hline
W9 & \textbf{28705.35} & 28169.25 & 26422.54 & 689.16 \\ \hline
W10 & \textbf{93547.26} & 91571.68 & 87487.99 & 2646.94 \\ \hline
\end{tabular}
\caption{\ref{alg:GW} cut approximations on randomly weighted graphs, time in seconds.}\label{table:weighted}
\end{table}

\subsection{Large graphs}\label{sec:large}

Since the Euler method has a time complexity of $\mathcal{O}(|E|)$, in this section we show that \ref{alg:signlessMBO} using the Euler method can provide Max-Cut approximations in a respectable time on large sparse datasets. The graphs Amazon0302 and Amazon0601 are networks in which the nodes represent products and an edge exists between two nodes if the corresponding products are frequently co-purchased; both of these networks were constructed in 2003. GNutella31 depicts a peer to peer file sharing network in 2002. PA RoadNet is a road network of Pennsylvania with intersections and endpoints acting as nodes and roads connecting them acting as edges. Email-Enron is a network where each edge represents an email being sent between two people. BerkStan-Web is a network of inter-domain and intra-domain hyperlinks between pages on the domains berkeley.edu and stanford.edu in 2002. Stanford is a network of hyperlinks between pages on the domain stanford.edu in 2002. All of these datasets were obtained from the website \cite{snapnets}. The graph WWW1999 is a model of the Internet in 1999 with edges depicting hyperlinks between websites, obtained from \cite{albert1999}. Table~\ref{table:TabLarge} displays the properties of these graphs. Table~\ref{table:EulerLarge} displays the results we obtained on these graphs choosing $\Delta_1^+$ as our operator, the Euler method as our signless diffusion solver, and $\tau = 10$. For these large graphs, we are unable to obtain results for comparison using \ref{alg:GW}, because \ref{alg:GW} requires too much memory for it to run on the same computer setup.

\begin{table}
\centering
\begin{tabular}{ |l|l|l|l|l|}
\hline
Graph & $|V|$ & $|E|$ & $d_{-}$ & $d_{+}$ \\ \hline
Amazon0302 & 262111 & 899792 & 1 & 420  \\ \hline
Amazon0601 & 403394  & 2443408  & 1 & 2752  \\ \hline
GNutella31 & 62586 & 147892 & 1 & 95  \\ \hline
PA RoadNet & 1088092 & 1541898 & 1 & 9 \\ \hline
Email-Enron & 36692 & 183831 & 1 & 1383  \\ \hline
BerkStan-Web & 685230 & 6649470 & 1 & 84290 \\ \hline
Stanford & 281904 & 1992636 & 1 & 38625  \\ \hline
WWW1999 & 325729 & 1090108 & 1 & 10721 \\ \hline
\end{tabular}
\caption{Properties of our large datasets we are testing on.}\label{table:TabLarge}
\end{table}

\begin{table}
\centering
\begin{tabular}{ |l|l|l|l|l|}
\hline
Graph & $\Delta_1^+$ (E) Best & $\Delta_1^+$ (E) Avg & $\Delta_1^+$ (E) Min & $\Delta_1^+$ (E) Time\\ \hline
Amazon0302 & 618942 & 618512.18 & 618030 & 0.49 \\ \hline
Amazon0601 & 1580070 & 1576960.80 & 1571089 & 1.90 \\ \hline
GNutella31 & 116552 & 116213.74 & 115916 & 0.06 \\ \hline
PA RoadNet & 1380131 & 1379797.90 & 1379416 & 0.64 \\ \hline
Email-Enron & 112665 & 111680.24 & 110279 & 0.02 \\ \hline
BerkStan-Web & 5335813 & 5319662.06 & 5281630 & 0.83 \\ \hline
Stanford & 1585802 & 1580445.14 & 1570469 & 0.47 \\ \hline
WWW1999 & 813000 & 809329.52 & 806130 & 0.21 \\ \hline
\end{tabular}

\caption{Results of \ref{alg:signlessMBO} using $\Delta_1^+$ and the Euler method on large datasets, time in hours.}\label{table:EulerLarge}
\end{table}


\section{Parameter choices}\label{sec:parameters}

\subsection{Variable $K$}

As stated in Section \ref{sec:spectral}, the computational advantage of \ref{alg:signlessMBO} using the spectral method is that not all the eigenpairs of $\Delta^+$ need to be used. In practice, if $K$ is large enough, the cut sizes obtained by \ref{alg:signlessMBO} using the spectral method does not improve significantly when $K$ is increased further. The plots in Figure~\ref{fig:KComp} highlight this. For these three tests we fixed the initial conditions, the choice of operator $\Delta_1^+$, and $\tau = 20$ for each respective graph. For Figure~\ref{fig:K1} we plot the best, average, and least cuts for each choice of $K$. For Figure~\ref{fig:K2} and Figure~\ref{fig:K3} we plot the mean of the best, average, and least cuts over all 100 graphs for each choice of $K$. The error bars indicate the corrected sample standard deviation of the best, average, and least cuts. We ran \ref{alg:signlessMBO} using the spectral method on the AS4 graph, increasing the value for $K$ in increments of 5 from 5 until 100. The plot in Figure~\ref{fig:K1} shows that at $K = 40$ the best, average, and least cut size changes very little for increasing $K$. For Figure~\ref{fig:K2} we ran \ref{alg:signlessMBO} on the 100 realisations of $G(5000,0.001)$ from Section~\ref{sec:Rand}, increasing $K$ in increments of 10 from 10 until 200. For Figure~\ref{fig:K3} we ran \ref{alg:signlessMBO} on the 100 realisations of $R(2500,2,0.009,0.8)$, increasing $K$ in increments of 5 from 5 until 100. The plots in Figure~\ref{fig:K2} and Figure~\ref{fig:K3} show that for our choices of Erd\"os-R\'enyi and random modular graphs increasing $K$ increases the cut sizes. We also note that the best, average, and minimum cut sizes plateau.

\begin{figure}
\centering
\begin{subfigure}{.75\textwidth}
  \centering
  \includegraphics[width=9cm]{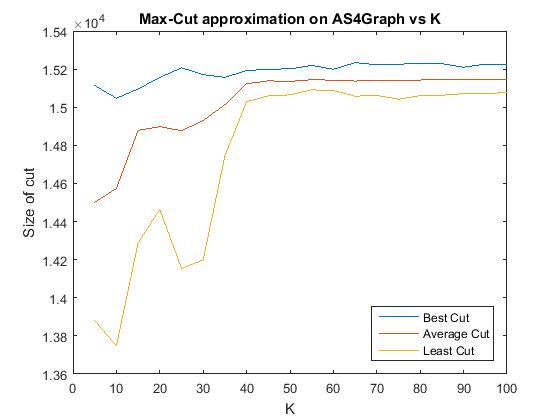}
  \caption{AS4: Cut size as function of $K$, $\tau = 20$.}\label{fig:K1}
\end{subfigure}%

\begin{subfigure}{.75\textwidth}
  \centering
  \includegraphics[width=9cm]{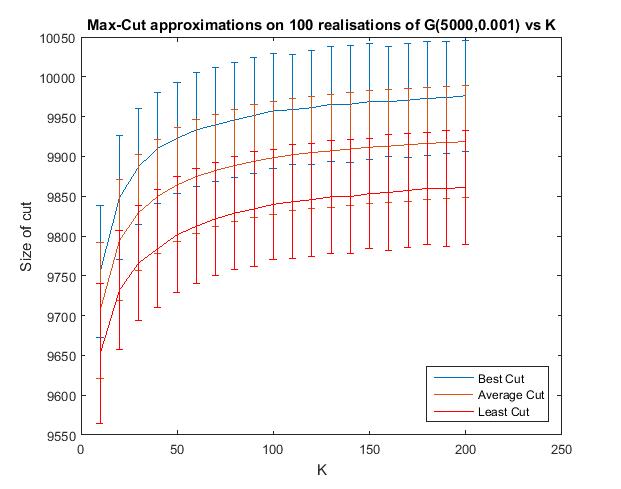}
  \caption{100 realisations of $G(5000,0.001)$: Cut size as function of $K$, $\tau = 20$.}\label{fig:K2}
\end{subfigure}%

\begin{subfigure}{.75\textwidth}
  \centering
  \includegraphics[width=9cm]{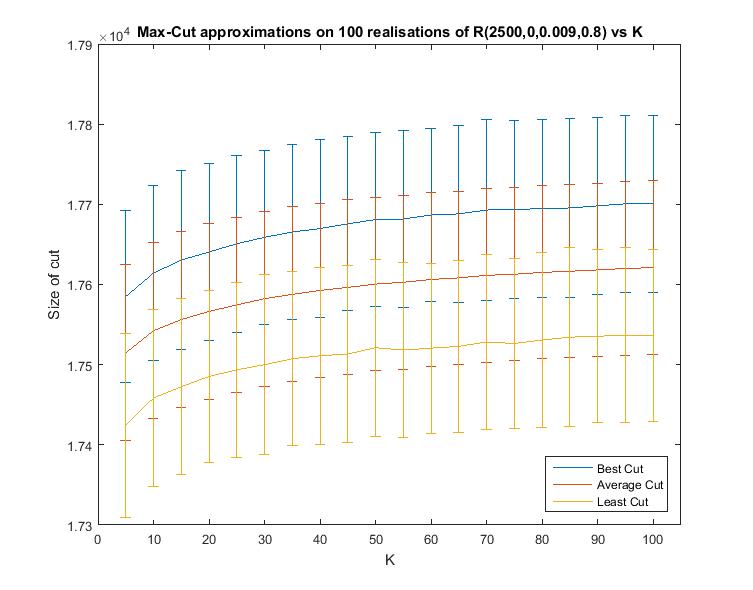}
  \caption{100 realisations of $R(2500,2,0.009,0.8)$: Cut size as function of $K$, $\tau = 20$.}\label{fig:K3}
\end{subfigure}
\caption{Cut size as function of $K$ for three graphs (best viewed in colour).}\label{fig:KComp}
\end{figure}

For large graphs, however, finding the value of $K$ beyond which the produced cut sizes plateau is problematic. We ran \ref{alg:signlessMBO} using the spectral method with $\Delta_1^+$ on the Amazon0302 graph, increasing $K$ in increments of 100 starting from 100 to 2600. As shown in Figure~\ref{fig:KAmazon} the best, average, and least outcomes of \ref{alg:signlessMBO} are still increasing at the end of the range of $K$ values we plotted. For $K = 200$ and $K = 2600$ the run time of \ref{alg:signlessMBO} was 12 minutes and 26 hours, respectively; this increase in computation time resulted in a 3\% increase in cut values. Comparing the cut size obtained for $K = 2600$ with the cut sizes obtained on Amazon0302 in Table~\ref{table:EulerLarge} we see that using the Euler method as the signless diffusion solver is more accurate and significantly faster.

\begin{figure}
\centering
  \includegraphics[width=10cm]{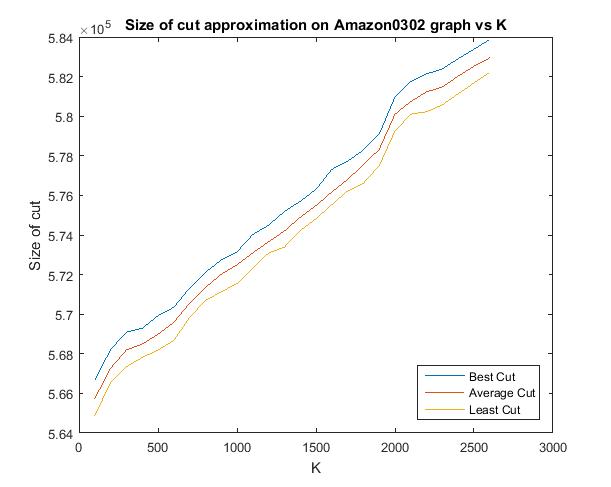}
\caption{Comparison of cut size approximation vs $K$ on Amazon0302 graph.}\label{fig:KAmazon}
\end{figure}

\subsection{Variable $\tau$}

Other than the pinning condition stated in Section~\ref{sec:pinning condition}, currently we have very little information on which to base our choice of $\tau$. In this section we compare the cut sizes obtained by \ref{alg:signlessMBO} against the variable $\tau$. We choose $\Delta_1^+$ as the signless Laplacian operator and the spectral method as the signless diffusion solver. Figure~\ref{fig:taucomp} displays the obtained cut sizes from \ref{alg:signlessMBO} on three (sets of) graphs and compares against $\tau$. For Figure~\ref{fig:tau1} we plot the best, average, and least cuts for each choice of $\tau$. For Figure~\ref{fig:tau2} and Figure~\ref{fig:tau3} we plot the mean of the best, average, and least cuts over all 100 graphs for each choice of $\tau$. The error bars indicate the corrected sample standard deviation of the best, average, and least cuts. We ran \ref{alg:signlessMBO} using the spectral method on the AS4 graph, increasing the value for $\tau$ in increments of 5 starting from 5 until 500. In Figure~\ref{fig:tau1} we see in this experiment that $5 \leq \tau \leq 40$ produces the best results with respect to our cut sizes. We also see that for $330 \leq \tau \leq 480$ the best, average, and least cuts are almost identical. For Figure~\ref{fig:tau2} we ran \ref{alg:signlessMBO} on the 100 realisations of $G(5000,0.001)$ from Section~\ref{sec:Rand}, increasing $\tau$ in increments of 5 starting from 5 until 125. For Figure~\ref{fig:tau3} we ran \ref{alg:signlessMBO} on the 100 realisations of $R(4000,20,0.01,0.7)$, increasing $\tau$ in increments of 5 starting from 5 until 100. In Figure~\ref{fig:tau2} and Figure~\ref{fig:tau3} we see the general trend that increasing $\tau$ beyond 20 decreases the mean over the best, average, and least cuts over all 100 realisations of $G(5000,0.001)$.

\begin{figure}
\centering
\begin{subfigure}{.75\textwidth}
  \centering
  \includegraphics[width=9cm]{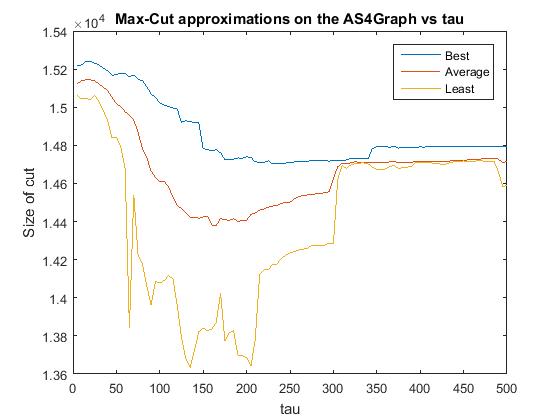}
  \caption{AS4: Cut size as function of $\tau$, $K = 89$.}\label{fig:tau1}
\end{subfigure}%

\begin{subfigure}{.75\textwidth}
  \centering
  \includegraphics[width=9cm]{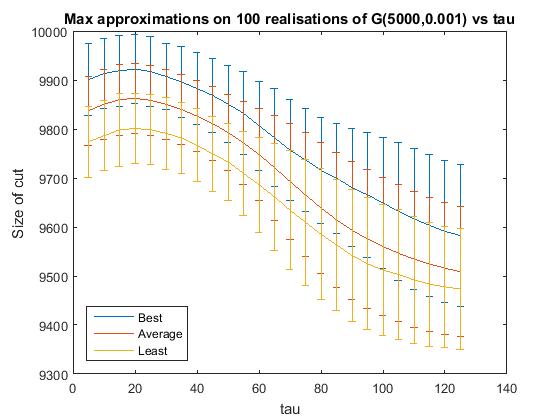}
  \caption{100 realisations of $G(5000,0.001)$: Cut size as function of $\tau$, $K = 49$.}\label{fig:tau2}
\end{subfigure}%

\begin{subfigure}{.75\textwidth}
  \centering
  \includegraphics[width=9cm]{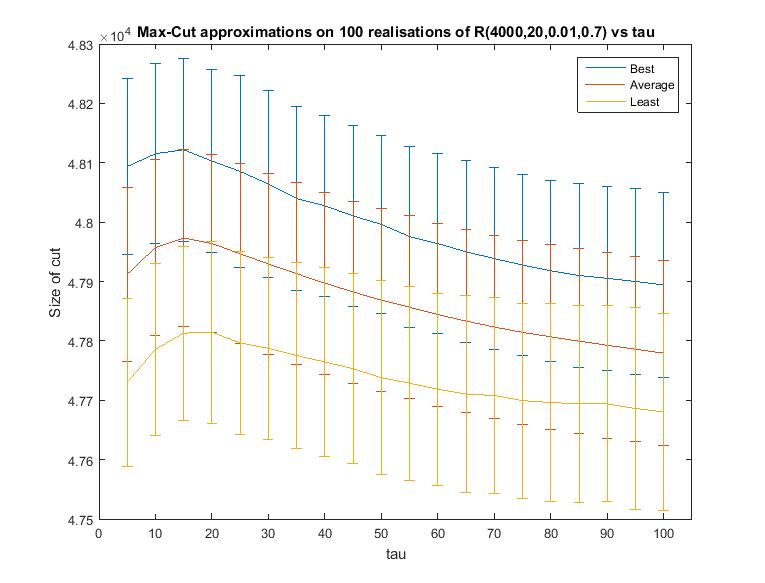}
  \caption{100 realisations of $R(4000,20,0.01,0.7)$: Cut size as function of $\tau$, $K = 40$.}\label{fig:tau3}
\end{subfigure}
\caption{Cut size as function of $\tau$ for three graphs (best viewed in colour).}\label{fig:taucomp}
\end{figure}

\subsection{Implicit Euler scheme}\label{sec:implicit}

On the random graphs we tested on in Section~\ref{sec:Rand} and Section~\ref{sec:mod} our explicit Euler scheme using $\Delta_0^+$ produced non-trivial cut sizes. However, for the scale free graphs in Section~\ref{sec:scale} and Section~\ref{sec:weighted} we did not find a value of $\tau$ or $dt$ such that the cuts induced from \ref{alg:signlessMBO} were non-trivial. In this subsection we show that we can solve the Euler equation implicitly in order to obtain non-trivial cut sizes with the operator $\Delta_0^+$, subject to suitable choices of $dt$ and $\tau$. However, the results are significantly inferior to the operators $\Delta_1^+$ and $\Delta_s^+$ for the implicit Euler scheme. We also compare the \ref{alg:signlessMBO} results obtained using the implicit scheme to the results obtained using the explicit scheme for a set of random graphs.

We run \ref{alg:signlessMBO} using the implicit Euler scheme on the AS4 and AS8 graph from Section~\ref{sec:scale} and the W9 graph from Section~\ref{sec:weighted}. We choose $dt = 0.2$ and $\tau = 20$ when $\Delta_1^+$ or $\Delta_s^+$ is the operator. For $\Delta_0^+$ we set $dt = 0.0005$ and $\tau = 0.05$ for the AS4 graph, and for the AS8 graph and the W9 graph we set $dt = 0.0001$ and $\tau = 0.01$. Table~\ref{tab:impScale} shows that \ref{alg:signlessMBO} using the implicit Euler scheme with $\Delta_0^+$ and our choice of parameters produces cut sizes, however they are significantly smaller in comparison to using $\Delta_1^+$ or $\Delta_s^+$.  

\begin{table}
\centering
\begin{tabular}{|l|l|l|l|}
\hline
Graph & $\Delta_1^+$ Best & $\Delta_s^+$ Best &  $\Delta_0^+$ Best \\ \hline
AS4 & 15276 & \textbf{15279} & 9259 \\ \hline
AS8 & \textbf{23083} & 23033 & 13725 \\ \hline
W9 & \textbf{28553.66} & 28485.28 & 17146.69 \\ \hline
\end{tabular}
\quad
\begin{tabular}{ |l|l|l|l|}
\hline
Graph & $\Delta_1^+$ Avg & $\Delta_s^+$ Avg &  $\Delta_0^+$ Avg \\ \hline
AS4 & \textbf{15196.52} & 15175.52 & 9124.68 \\ \hline
AS8 & \textbf{22934.30} & 22844.16 & 13585.56 \\ \hline
W9 & \textbf{28360.46} & 28294.40 & 16847.92 \\ \hline
\end{tabular}

\vspace{0.5cm}

\begin{tabular}{ |l|l|l|l|}
\hline
Graph & $\Delta_1^+$ Least & $\Delta_s^+$ Least &  $\Delta_0^+$ Least \\ \hline
AS4 & \textbf{15124} & 15056 & 8964 \\ \hline
AS8 & \textbf{22521} & 22454 & 13477 \\ \hline
W9 & \textbf{28103.28} & 28075.62 & 16521.43 \\ \hline
\end{tabular}
\quad
\begin{tabular}{ |l|l|l|l|}
\hline
Graph & $\Delta_1^+$ Time & $\Delta_s^+$ Time &  $\Delta_0^+$ Time \\ \hline
AS4 & 47.83 & 50.94 & \textbf{7.47} \\ \hline
AS8 & 105.22 & 114.74 & \textbf{11.48} \\ \hline
W9 & 38.61 & 42.57 & \textbf{5.26} \\ \hline
\end{tabular}
\caption{Cut sizes obtained by \ref{alg:signlessMBO} using the implicit Euler scheme on scale free graphs, time in seconds.}\label{tab:impScale}
\end{table}

We run \ref{alg:signlessMBO} on the 100 realisations of $G(1000,0.01)$ and $R(4000,20,0.01,0.7)$ in Section~\ref{sec:Rand} and Section~\ref{sec:mod} respectively, using the implicit and explicit Euler method for each operator $\Delta^+ \in \{\Delta_0^+,\Delta_1^+,\Delta_s^+\}$. We choose the same values of $\tau$ and $dt$ as chosen in Section~\ref{sec:Rand} and Section~\ref{sec:mod}, fixing the initial conditions for both methods. Figure~\ref{fig:G1000imp} and Figure~\ref{fig:mod4000imp} show that the average obtained cut sizes using the implicit Euler method are slightly better than the average obtained cut sizes obtained using the explicit method. However, Table~\ref{tab:impexpTime} shows that \ref{alg:signlessMBO} using the explicit Euler method produces cut sizes in less time than using the implicit Euler method on these sets of random graphs. This is why we choose the explicit method for the Euler method in Section~\ref{sec:results}.

\begin{figure}
\centering
  \includegraphics[width=12cm]{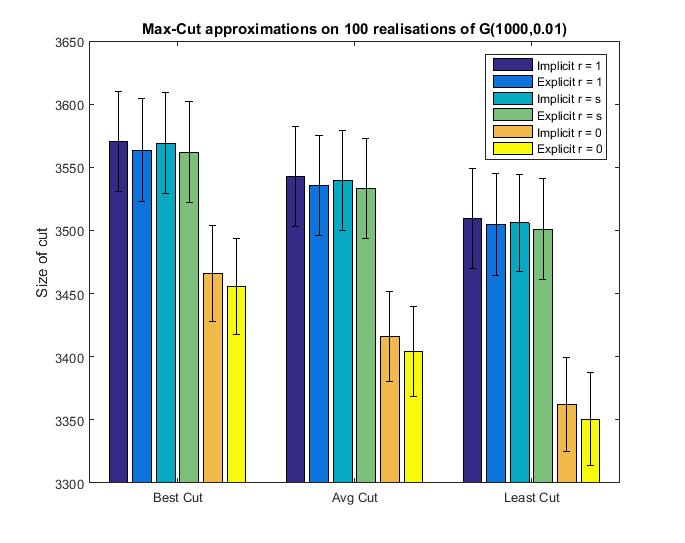}
\caption{Bar chart of Max-Cut approximations on 100 realisations of $G(1000,0.01)$ using the implicit Euler method and the explicit Euler method.}\label{fig:G1000imp}
\end{figure}

\begin{figure}
\centering
  \includegraphics[width=12cm]{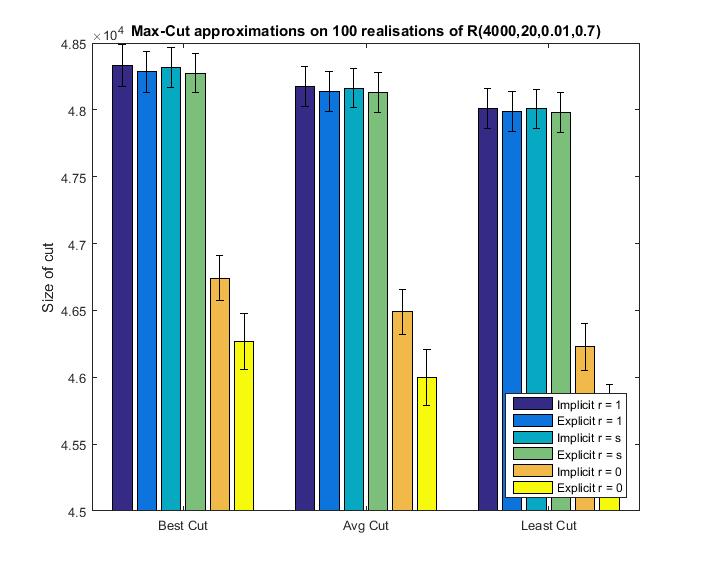}
  \caption{Bar chart of Max-Cut approximations on 100 realisations of $R(4000,20,0.01,0.7)$.}\label{fig:mod4000imp}
\end{figure}

\begin{table}
\begin{tabular}{|l|l|l|l|l|l|l|}
\hline
Graph & $\Delta_1^+$ (I) & $\Delta_1^+$ (E) & $\Delta_s^+$ (I) & $\Delta_s^+$ (E) & $\Delta_0^+$ (I) & $\Delta_0^+$ (E) \\ \hline
$G(1000,0.01)$ & 3.36 & 1.82 & 3.28 & 1.79 & 2.20 & \textbf{1.19} \\ \hline
$R(4000,20,0.01,0.7)$ & 62.97 & 44.23 & 62.16 & 44.18 & 41.53 & \textbf{24.01} \\ \hline
\end{tabular}
\caption{Average \ref{alg:signlessMBO} run-times for each realisation of $G(1000,0.01)$ and $R(4000,20,0.01,0.7)$, time in seconds. (I) indicates the implicit Euler method and (E) indicates the explicit Euler method.}\label{tab:impexpTime}
\end{table}

\section{Conclusions}\label{sec:conclusions}

We have proven that the signless graph Ginzburg-Landau functional $f_{\varepsilon}^+$ $\Gamma$-converges to a Max-Cut objective functional as $\e\downarrow 0$ and thus minimizers of $f_{\varepsilon}^+$ can be used to approximate maximal cuts of a graph. We use an adaptation of the graph MBO scheme involving signless graph Laplacians to approximately minimize $f_\e^+$. We solve the signless diffusion step of our graph MBO scheme using a spectral truncation method and an Euler method.

We tested the resulting \ref{alg:signlessMBO} algorithm on various graphs using both these signless diffusion solvers, and compared the results and run times with those obtained using the \ref{alg:GW} algorithm. In our tests on realizations of random Erd\"os-R\'enyi graphs and on realizations of random modular graphs our \ref{alg:signlessMBO} algorithm using the spectral method outperforms \ref{alg:GW} with reduced run times. On our examples of scale free graphs \ref{alg:GW} usually gives the best maximum cut approximations, but requires run times that are two orders of magnitude longer than those of \ref{alg:signlessMBO}, which obtains cut sizes within about 2\% of those obtained by \ref{alg:GW}. Similar conclusions follow from our tests on weighted graphs, that used randomly generated Erd\"os-R\'enyi graphs and modular graphs, and some scale free graphs, all with random edge weights. We have also shown that our algorithm using the Euler method can be used on large sparse datasets, with reasonable computation times.

In our tests (and for our parameter choices) we see that \ref{alg:signlessMBO} using both $\Delta_1^+$ and $\Delta_s^+$ produces larger Max-Cut approximations than $\Delta_0^+$ for all of the graphs that we tested on.

There are still many open questions related to the \ref{alg:signlessMBO} algorithm, for example questions related to a priori parameter choices (such as $\tau$ and $K$), and performance guarantees. These can be the subject of future research.

\section*{Acknowledgements}
We would like to thank the EPSRC for supporting this work through the DTP grant EP/M50810X/1.
We would also like to thank Matthias Kurzke and Braxton Osting for helpful discussions.
This project has received funding from the European Union's Horizon 2020 research and innovation programme under the Marie Sk\l{}odowska--Curie grant agreement No 777826.


\bibliographystyle{ieeetr}
\bibliography{BibFile}

\vspace{0.5cm}

\noindent
Email address: Blaine.Keetch@nottingham.ac.uk\\
Email address: Y.Vangennip@nottingham.ac.uk

\end{document}